   \providecommand{\keywords}[1]{\textbf{\textit{Key words:}} #1}
 \numberwithin{equation}{section}
 \newtheorem{thm}{Theorem}[section]
 \newtheorem{lemma}[thm]{Lemma}
 \newtheorem{definition}[thm]{Definition}
 \newtheorem{cor}[thm]{Corollary}
 \newtheorem{prop}[thm]{Proposition}
 \newtheorem{rem}[thm]{Remark}
 \newtheorem{conj}[thm]{Conjecture}
 \newtheorem{claim}[thm]{Claim}
\begin{document}
\title{\textbf {Vector Bundles on Flag varieties}}

\author{Rong Du \thanks{School of Mathematical Sciences
Shanghai Key Laboratory of PMMP,
East China Normal University,
Rm. 312, Math. Bldg, No. 500, Dongchuan Road,
Shanghai, 200241, P. R. China,
rdu@math.ecnu.edu.cn.
The Research is Sponsored by National Natural Science Foundation of China (Grant No. 11531007), Natural Science Foundation of China and the Israel Science Foundation (Grant No. 11761141005) and Science and Technology Commission of Shanghai Municipality (Grant No. 18dz2271000).},
Xinyi Fang
\thanks{School of Mathematical Sciences
Shanghai Key Laboratory of PMMP,
East China Normal University,
No. 500, Dongchuan Road,
Shanghai, 200241, P. R. China,
2315885681@qq.com.
The Research is Sponsored by National Natural Science Foundation of China (Grant No. 11531007) and Science and Technology Commission of Shanghai Municipality (Grant No. 18dz2271000).}
and Yun Gao
\thanks{School of Mathematical Sciences,
Shanghai Jiao Tong University,
Shanghai 200240, P. R. of China,
gaoyunmath@sjtu.edu.cn.
The Research is Sponsored by National Natural Science Foundation of China (Grant No. 11531007).
}
}

\date{}
\maketitle

\begin{center}
{In memory of our friend Prof. Yi Zhang}
\end{center}

\begin{abstract}
We study vector bundles on flag varieties over an algebraically closed field $k$. In the first part, we suppose $G=G_k(d,n)$ $(2\le d\leq n-d)$ to be the Grassmannian manifold parameterizing linear subspaces of dimension $d$ in $k^n$, where $k$ is an algebraically closed field of characteristic $p>0$. Let $E$ be a uniform vector bundle over $G$ of rank $r\le d$. We show that $E$ is either a direct sum of line bundles or a twist of a pull back of the universal bundle $H_d$ or its dual $H_d^{\vee}$ by a series of absolute Frobenius maps. In the second part, splitting properties of vector bundles on general flag varieties $F(d_1,\cdots,d_s)$ in characteristic zero are considered. We prove a structure theorem for bundles over flag varieties which are uniform with respect to the $i$-th component of the manifold of lines in $F(d_1,\cdots,d_s)$. Furthermore, we generalize the Grauert-M$\ddot{\text{u}}$lich-Barth theorem to flag varieties. As a corollary, we show that any strongly uniform $i$-semistable $(1\le i\le n-1)$ bundle over the complete flag variety splits as a direct sum of special line bundles.
\end{abstract}
\keywords{uniform vector bundle, Grassmannian, flag variety, Frobenius map}

\section{Introduction}
It is classically known that any vector bundle on a projective line over arbitrary algebraically closed field $k$ splits as a direct sum of line bundles. However, if the dimension of a projective space is bigger than or equal to two, the situation is pretty involved. So the splitting of vector bundles on higher dimensional projective spaces has long been a major concern among the problems on vector bundles in algebraic geometry. For non-splitting vector bundles, there are many classification results on some special classes of vector bundles. One of the classes that has been studied more widely is \emph{uniform} vector bundles; that is, bundles whose splitting type is independent of the chosen line. The notion of a uniform vector bundle appears first in a paper of Schwarzenberger\cite{ref1}. In characteristic zero, much work has been done on the classification of uniform vector bundles over projective spaces. In 1972, Van de Ven \cite{ref2} proved that for $n>2$, the uniform 2-bundles over $\mathbb{P}_k^n$ split and the uniform 2-bundles over $\mathbb{P}_k^2$ are precisely the bundles $\mathcal{O}_{\mathbb{P}_k^2}(a)\bigoplus\mathcal{O}_{\mathbb{P}_k^2}(b)$
and $T_{\mathbb{P}_k^2}(a),a,b\in\mathbb{Z}$. In 1976, Sato \cite{ref3} proved that for $2<r<n$, the uniform $r$-bundles over $\mathbb{P}_k^n$ split by using a theorem of Tango \cite{ref16} about holomorphic mappings from projective spaces to Grassmannians. In 1978, Elencwajg \cite{ref4} extended the investigations of Van de Ven to show that uniform vector bundles of rank 3 over $\mathbb{P}_k^2$  are of the form
$$\mathcal{O}_{\mathbb{P}_k^2}(a)\bigoplus\mathcal{O}_{\mathbb{P}_k^2}(b)\bigoplus\mathcal{O}_{\mathbb{P}_k^2}(c), ~ T_{\mathbb{P}_k^2}(a)\bigoplus\mathcal{O}_{\mathbb{P}_k^2}(b)\quad\text{or}\quad S^2T_{\mathbb{P}_k^2}(a),$$ where $a$, $b$, $c\in \mathbb{Z}$.  Sato \cite{ref3} had previously shown that for $n$ odd, uniform $n$-bundles over $\mathbb{P}_k^n$ are of the forms
 $$\oplus_{i=1}^{n}\mathcal{O}_{\mathbb{P}_k^n}(a_i),~ T_{\mathbb{P}_k^n}(a)\quad\text{or}\quad \Omega^1_{\mathbb{P}_k^n}(b),$$  where $a_i,a, b\in \mathbb{Z}$. So the results of Elencwajg and Sato yield a complete classification of uniform 3-bundles over $\mathbb{P}_k^n$. In particular all uniform 3-bundles over $\mathbb{P}_k^n$ are homogeneous. Later, Elencwajg, Hirschowitz and Schneider \cite{ref5} showed that Sato's result is also true for $n$ even. Around 1982, Ellia \cite{ref7} and Ballico \cite{ref8} independently proved that for $n\ge 3$, the uniform $(n+1)$-bundles over $\mathbb{P}_k^{n}$ are of the form
  $$\oplus_{i=1}^{n+1}\mathcal{O}_{\mathbb{P}_k^n}(a_i),~ T_{\mathbb{P}_k^n}(a)\bigoplus\mathcal{O}_{\mathbb{P}_k^n}(b)\quad\text{or}\quad\Omega^1_{\mathbb{P}_k^n}(c)\bigoplus\mathcal{O}_{\mathbb{P}_k^n}(d), $$ where $a_i, a,b,c,d\in\mathbb{Z}$. One can go over the good reference by Okonek, Schneider and Spindler \cite{ref14} for related topics. Later, similar results have been extensively studied for uniform vector bundles on varieties swept out by lines, such as quadrics \cite{ref17, ref18}, Grassmannians\cite{ref15} and special Fano manifolds \cite{ref9}.

In positive characteristic, for $2\le r<n$, the uniform $r$-bundles over $\mathbb{P}_k^n$ split by Sato's result\cite{ref3}. The classification problem of uniform $n$-bundles on $\mathbb{P}_k^n$ has been solved by Lange \cite{ref10} for $n=2$ and Ein \cite{ref6} for all $n$. It seems that the classification of uniform vector bundles over other projective manifolds covered by lines in characteristic $p$ is still open. In the first part of the paper, we consider the uniform vector bundles on Grassmannians in positive characteristic and prove the following main theorem.

\begin{thm}\emph{(Theorem \ref{b} and Theorem \ref{z})}\label{m}
	Let $G=G_k(d,n)$ $(d\leq n-d)$ be the Grassmannian manifold parameterizing linear subspaces of dimension $d$ in $k^n$, where $k$ is an algebraically closed field of characteristic $p>0$. Let $E$  be a uniform vector bundle over $G$ of rank $r\le d$.
\begin{itemize}
\item If $r<d$, then $E$ is a direct sum of line bundles.
\item If $r=d$, then $E$ is either a direct sum of line bundles or a twist of a pull back of the universal bundle $H_d$ or its dual $H_d^{\vee}$ by a series of absolute Frobenius maps.
\end{itemize}
\end{thm}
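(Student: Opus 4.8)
The plan is to reduce the whole statement to the known classifications of uniform bundles on projective spaces in characteristic $p$, by restricting $E$ to the linear $\mathbb{P}^d$'s sitting inside $G$ and then globalizing the resulting local structure. Concretely, for a $(d+1)$-dimensional subspace $B\subset k^n$ the locus $\alpha_B=\{W:W\subset B\}$ is a linearly embedded $\mathbb{P}(B^\vee)\cong\mathbb{P}^d$, these cover $G$, and every line of $G$ is an ordinary line of some $\alpha_B$. Hence $E|_{\alpha_B}$ is again uniform of the same splitting type as $E$, and I can feed it into Sato's theorem (a uniform bundle of rank $<N$ on $\mathbb{P}^N$ splits) and into Ein's classification of rank-$N$ uniform bundles on $\mathbb{P}^N$ in characteristic $p$ (a direct sum of line bundles, or a twist of a Frobenius pull-back of $T_{\mathbb{P}^N}$ or $\Omega^1_{\mathbb{P}^N}$). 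I would also record, from the tautological sequence, the identification $H_d|_{\alpha_B}\cong\Omega^1_{\mathbb{P}^d}(1)$ (and dually $H_d^\vee|_{\alpha_B}\cong T_{\mathbb{P}^d}(-1)$), so that the candidate extremal bundles $(F^e)^*H_d(a')$ and $(F^e)^*H_d^\vee(a')$ have exactly the plane-wise restrictions that Ein's list produces.

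For the case $r<d$, every restriction $E|_{\alpha_B}$ splits into line bundles. I would globalize by descent along the incidence variety $J=\{(B,W):W\subset B\}$, with its projections to $G_k(d+1,n)$ (fibers the $\alpha$-planes $\mathbb{P}^d$) and to $G$ (fibers $\{B\supset W\}\cong\mathbb{P}^{n-d-1}$, which are connected since $n-d-1\ge d-1\ge 1$). The relative splitting along the $\alpha$-planes has as its top graded piece a sub-bundle on $J$ that is canonical, hence constant along the connected fibers of $J\to G$, and therefore descends to a sub-bundle $E_1\subset E$. After a twist $E_1$ has balanced trivial splitting type, so the same descent forces $E_1\cong\mathcal{O}_G(a_1)^{\oplus s}$; passing to $E/E_1$ and inducting on the rank produces a filtration of $E$ with line-bundle quotients $\mathcal{O}_G(a_i)$. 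The filtration splits because $\mathrm{Ext}^1(\mathcal{O}_G(a_j),\mathcal{O}_G(a_i))=H^1(G,\mathcal{O}_G(a_i-a_j))=0$ for all $i,j$, by the concentration of the cohomology of line bundles on $G$ in a single degree, giving $E\cong\bigoplus_i\mathcal{O}_G(a_i)$.

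For the case $r=d$ the restrictions $E|_{\alpha_B}$ land in Ein's trichotomy. If $E|_{\alpha_B}$ is a sum of line bundles for one $B$, then by connectedness of the family of $(d+1)$-planes it is so for every $B$, and the previous paragraph applies verbatim to give a splitting of $E$. Otherwise $E|_{\alpha_B}\cong(F^e)^*\Omega^1_{\mathbb{P}^d}(a)$ or $(F^e)^*T_{\mathbb{P}^d}(a)$, with $e$ and $a$ constant because the splitting type is constant. By the identification above these are precisely the restrictions of $(F^e)^*H_d(a')$, respectively $(F^e)^*H_d^\vee(a')$, for the appropriate twist $a'$. I would then reconstruct $E$ globally by comparing it with this candidate bundle $E'$: the two are uniform with the same splitting type and the same restriction to every $\alpha$-plane, so an isomorphism $E'|_{\alpha_B}\cong E|_{\alpha_B}$ exists plane by plane, and it glues to a global isomorphism once the obstruction in $H^1(G,\mathcal{H}om(E',E))$ is shown to vanish. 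The balanced case $n=2d$ is handled symmetrically, the Grassmann duality $W\mapsto W^\perp$ interchanging the roles of $H_d$ and $H_d^\vee$.

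The main obstacle is the final globalization in the non-split $r=d$ case. In characteristic zero the plane-wise data rigidifies automatically, since the bundles in play are homogeneous and Bott vanishing controls every obstruction group; in characteristic $p$ the Frobenius destroys naive homogeneity and Bott-type vanishing can fail, so the two vanishing statements driving the argument — the vanishing of the $\mathrm{Ext}^1$ groups that split the filtration, and the vanishing of $H^1(G,\mathcal{H}om(E',E))$ that glues the isomorphism with $(F^e)^*H_d(a')$ — must be proved directly. Equally delicate is showing that the integer $e$ detected on the $\alpha$-planes is genuinely global, i.e. that $E$ itself lies in the image of the $e$-fold absolute Frobenius pull-back rather than merely looking so on each plane. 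Controlling these positive-characteristic cohomology and Frobenius-descent phenomena is where the real content of Theorem \ref{z} concentrates.
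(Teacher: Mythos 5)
Your route (restrict to the linear $\mathbb{P}^d$'s $\alpha_B\subset G$, invoke Sato/Ein there, then globalize) is genuinely different from the paper's, which never passes through Ein's classification on $\mathbb{P}^d$: the paper works on the incidence variety $\bar F=F(d-1,d,d+1)$ of lines, builds the relative Harder--Narasimhan filtration of $q_1^*E$, and descends it to $G$. Both of your globalization steps, however, contain real gaps, and they are precisely the points where the paper has to work hardest. In the $r<d$ case, the assertion that the top graded piece of $E|_{\alpha_B}$ is ``canonical, hence constant along the connected fibers of $J\to G$'' is not an argument: two planes $B,B'$ through a fixed $W$ meet only in the point $W$ (since $B\cap B'=W$), so there is no line along which to compare the two subspaces of $E_W$, and canonicity is relative to $B$. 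One must actually prove that the induced morphism from the family of lines (or planes) through $W$ to $\mathbb{G}_k(t-1,\mathbb{P}_k(E_W^\vee))$ is constant; the paper does this in Lemma \ref{KQ} by pulling back the Pl\"ucker bundle to each $\mathbb{P}_k^{d-1}\subset \mathrm{VMRT}_x$ and computing Chern classes in $A(\mathbb{P}_k^{d-1})=\mathbb{Z}[\mathcal{H}]/\mathcal{H}^d$ --- this is exactly where the hypothesis $r<d$ enters, and your proposal never uses it in the descent.

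The $r=d$ non-split case is where the proposal breaks down more seriously. Gluing plane-by-plane isomorphisms $E'|_{\alpha_B}\cong E|_{\alpha_B}$ is not controlled by a class in $H^1(G,\mathcal{H}om(E',E))$: the $\alpha_B$ are not an open cover, their pairwise intersections are points, and there is no cocycle condition to obstruct; moreover the needed vanishing of $H^1(G,(F^e)^*(H_d^\vee\otimes H_d)(\ast))$ in characteristic $p$ is exactly the kind of statement that fails for Bott-type reasons, as you yourself note. You have in effect deferred the entire content of Theorem \ref{z} to two unproved vanishing/rigidity statements. The paper takes a different and complete route: from Guyot's Chow-ring relation (\ref{chow}) and Lemma \ref{pic} it produces a genuinely global exact sequence $0\to HN_q^1(q_1^*E)\to q_1^*E\to ((\mathcal{H}_{H_d^\vee})^\vee)^{\otimes(-b)}\to 0$ on $\bar F$, reads this as a $G$-morphism $\sigma:\bar F\to\mathbb{P}(E)$, shows $d\sigma=0$ on $q_2$-fibres when $b<-1$, and applies Katz's Lemma \ref{ka} to factor $\sigma$ through the relative Frobenius, forcing $-b=p^m$ and yielding $E\cong {F^m}^*H_d$ by a direct pushforward computation (the case $a\neq 0$, $d=n-d$ being handled by Proposition \ref{prop}). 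That argument replaces both your gluing step and your ``is $e$ really global?'' worry; without something equivalent to it, your proposal does not prove the second bullet of the theorem.
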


\begin{rem}
\emph{The first part of the theorem holds for any algebraically closed field. The result in characteristic zero is due to Guyot \cite{ref15}. We generalize the method of Elenwajg-Hirschowitz-Schneider \cite{ref5}, by which the authors considered the case over projective spaces, to deal with Grassmannians over any algebraically closed field. Our basic idea follows from the strategy in  \cite{ref5} but the complexity in the case of Grassmannians is far better than in the case of projective spaces. For example, Proposition \ref{e} cannot be generalized from the case for projective spaces directly. We need to strengthen the condition and use different. However, for $r<d$, our method is independent of the characteristic of the field.
For the case $r=d$, we checked that the first part of Guyot's paper \cite{ref15}, which is independent of the characteristic of the field $k$, is for studying the Chow ring and axiomatically computing Chern class of vector bundles of flag varieties. However, Guyot's method can only handle the case for $b=-1$, i.e. $(0,\ldots,0,-1)$ in the proof of Theorem \ref{m} because this is the only case for characteristic zero. In characteristic $p$, we need to use Katz's Lemma.
So for the second part of the theorem, we mainly use Ein's \cite{ref6} ideas for projective spaces and Katz's \cite{ref11} key lemma to study vector bundles in positive characteristic.	}
\end{rem}

The proof of the first part of the above theorem has a surprising consequence for vector bundles of arbitrary rank.
	\begin{cor}\emph{(Corollary \ref{d})}
Let $E$ be a vector bundle over $G=G(d,n)$ $(d\ge 2)$ over an algebraically closed field. If $E$ splits as a direct sum of line bundles when it restricts to every $\mathbb{P}_k^{d}\subseteq G$, then $E$ splits as a direct sum of line bundles over $G$.
	\end{cor}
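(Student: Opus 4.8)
The plan is to run the same descent and induction that drives the first part of Theorem~\ref{m}, now organised as an induction on the rank $r$ rather than constrained by $r\le d$. First I would pin down which $\mathbb{P}_k^{d}$'s matter: the linearly embedded $d$-planes of $G$ are the sub-Grassmannians $G(d,U)\cong\mathbb{P}_k^{d}$ with $U\subset k^n$ of dimension $d+1$, and under the Pl\"ucker polarization $\mathcal{O}_G(1)$ restricts to $\mathcal{O}_{\mathbb{P}_k^{d}}(1)$ on each of them. Since every line of $G$ (given by a flag $A\subset B$ with $\dim A=d-1$, $\dim B=d+1$) lies in the single $G(d,B)$, the hypothesis forces $E$ to be uniform with one fixed splitting type $a_1\le\cdots\le a_r$, and $E|_{G(d,U)}\cong\bigoplus_j\mathcal{O}(a_j)$ with the same exponents for every $U$. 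For $r=1$ there is nothing to prove, so assume $r>1$.

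Next I would extract the canonical top piece fibrewise and globalize it. On each $G(d,U)$ the maximal destabilizing subbundle is $\mathcal{O}(a_r)^{\oplus m}$, where $a_r$ is the largest exponent occurring $m$ times, realized canonically as the image of the evaluation map $H^0\bigl(E(-a_r)|_{G(d,U)}\bigr)\otimes\mathcal{O}(a_r)\to E|_{G(d,U)}$. To put these together I pass to the incidence variety $Z=\{(V,U): V\subset U,\ \dim V=d,\ \dim U=d+1\}$ with its two projections $p\colon Z\to G$, whose fibres are $\mathbb{P}_k^{\,n-d-1}$, and $q\colon Z\to G(d+1,n)$, whose fibres are exactly the $\mathbb{P}_k^{d}=G(d,U)$. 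Because the splitting type is constant, $h^0$ is constant along the $q$-fibres, so $q_*\bigl(p^*E\otimes p^*\mathcal{O}_G(-a_r)\bigr)$ is locally free of rank $m$ and commutes with base change; the natural map $q^*q_*(\,\cdot\,)\to(\,\cdot\,)$ then produces a subbundle $\mathcal{F}\subset p^*E$ restricting on each $q$-fibre to the canonical $\mathcal{O}(a_r)^{\oplus m}$. This is the relative version of the construction already used for Theorem~\ref{m}.

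The heart of the argument is then to descend $\mathcal{F}$ along $p$ to a subbundle $F\subset E$ with $F|_{G(d,U)}\cong\mathcal{O}(a_r)^{\oplus m}$. Concretely this amounts to showing that $\mathcal{F}$ is trivial on the fibres $p^{-1}(V)\cong\mathbb{P}_k^{\,n-d-1}$, equivalently that the $m$-dimensional subspace of $E_V$ cut out by the top piece is independent of the $(d+1)$-plane $U\supset V$. I expect this rigidity to be the main obstacle, and it is precisely the constancy statement supplied by the descent step in the proof of the first part of Theorem~\ref{m}, which is characteristic-free and never uses $r\le d$. Once $F$ is obtained it restricts to $\mathcal{O}(a_r)^{\oplus m}$ on every $\mathbb{P}_k^{d}$, so by the inductive hypothesis $F\cong\mathcal{O}_G(a_r)^{\oplus m}$; the degenerate case $m=r$, in which all exponents coincide and $E(-a_r)$ is trivial on every $\mathbb{P}_k^{d}$, is the base of the induction and is handled by the same descent (here the rigidity is automatic, the cut-out subspace being all of $E_V$), forcing $E(-a_r)\cong\mathcal{O}_G^{\oplus r}$. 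Restricting $0\to F\to E\to E/F\to 0$ to any $G(d,U)$ splits the top summand off, so $E/F$ again splits on every $\mathbb{P}_k^{d}$ and the inductive hypothesis gives $E/F\cong\bigoplus_{a_i<a_r}\mathcal{O}_G(a_i)$.

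Finally I would split the extension. One has $\mathrm{Ext}^1_G(E/F,F)\cong\bigoplus_{a_i<a_r}H^1\bigl(G,\mathcal{O}_G(a_r-a_i)\bigr)^{\oplus m}$, and since $G=G(d,n)$ is arithmetically Cohen--Macaulay in its Pl\"ucker embedding in every characteristic, its intermediate cohomology $H^1(G,\mathcal{O}_G(t))$ vanishes for all $t$ as soon as $\dim G\ge 2$ (the case $n=d+1$, where $G=\mathbb{P}_k^{d}$, being trivial). Hence the sequence splits, $E\cong\mathcal{O}_G(a_r)^{\oplus m}\oplus\bigl(\bigoplus_{a_i<a_r}\mathcal{O}_G(a_i)\bigr)$, and the induction closes. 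The only genuinely delicate point is the descent of $\mathcal{F}$ in the third step; everything else is uniformity bookkeeping or the standard line-bundle vanishing on Grassmannians.
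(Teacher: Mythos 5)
Your overall architecture (induct on the rank, extract the extremal summand by a direct image over an incidence variety, descend it to $G$, split the resulting extension by vanishing of $H^1(G,\mathcal{O}_G(t))$) is parallel to the paper's, but the step you yourself single out as ``the only genuinely delicate point'' --- the rigidity needed to descend $\mathcal{F}$ along $p$ --- is left unproved, and the justification you offer for it is false. You claim the required constancy ``is precisely the constancy statement supplied by the descent step in the proof of the first part of Theorem~\ref{m}, which is characteristic-free and never uses $r\le d$.'' It does use it: in Lemma~\ref{KQ} the constancy of $\psi=\varphi|\mathbb{P}_k^{d-1}$ comes from the relation $c(\psi^{\ast}H_t)\cdot c(\psi^{\ast}Q_{r-t})=1$ in $A(\mathbb{P}_k^{d-1})=\mathbb{Z}[\mathcal{H}]/\mathcal{H}^{d}$, which forces both factors to equal $1$ only because the total rank is $r<d$, so that no truncation modulo $\mathcal{H}^d$ occurs; the paper explicitly flags that ``otherwise the proof doesn't work.'' Since Corollary~\ref{d} concerns bundles of arbitrary rank, that Chern-class argument is unavailable, and the whole content of the corollary is that the hypothesis (splitting on every $\mathbb{P}_k^{d}$) must be fed into the descent as a substitute. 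Your proposal never does this: after your first paragraph the hypothesis is only invoked again through the inductive call, never in the rigidity step.

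The gap is tied to a second inaccuracy: the linearly embedded $\mathbb{P}_k^{d}$'s of $G$ are not only the $G(d,U)$ with $\dim U=d+1$; since $d\le n-d$ there is a second family $\{W:\ V_{d-1}\subseteq W\subseteq V_{2d}\}$, and it is precisely the splitting of $E$ on that second family that rescues the descent. For fixed $V$, the subspace of $E_V$ cut out by the top piece of $E|_{G(d,U)}$ coincides with the top piece at $[V]$ of $E|_L$ for every line $L\ni[V]$ in $G(d,U)$; the tangent directions at $[V]$ of a $d$-plane of the second family sweep out a $\{pt\}\times\mathbb{P}_k^{d-1}$ inside $\mathrm{VMRT}_{[V]}\cong\mathbb{P}_k^{d-1}\times\mathbb{P}_k^{n-d-1}$, the splitting of $E$ on that $d$-plane makes the top piece constant along it, and these subvarieties connect the various $U\supseteq V$ --- which is what makes your map $\mathbb{P}_k^{n-d-1}\to G(m,E_V)$ constant. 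This is exactly how the paper argues ($\varphi|Z'$ is constant for the tangent variety $Z'$ of each $d$-plane $Z$, followed by the covering-and-connecting argument of Lemma~\ref{KQ}). The same omission already affects your opening claim of uniformity: constancy of the splitting type on each $G(d,B)$ separately does not compare types of lines lying in different $G(d,B)$'s; one again needs the second family. So the skeleton is right, but the one step that genuinely requires the hypothesis of the corollary is missing.
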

	
In the second part of the paper, we consider vector bundles over flag varieties in characteristic zero.
For fixed integer $n$, let $F:=F(d_1,\ldots,d_s)$ be the flag manifold parameterizing flags \[V_{d_1}\subseteq\cdots\subseteq V_{d_s}\subseteq k^n\]
		 where $dim(V_{d_i})=d_i, 1\le i \le s$. Let $F^{(i)}:=F(d_1,\ldots,d_{i-1},d_i-1,d_i+1,d_{i+1},\ldots,d_s)$ be the $i$-th irreducible component of the manifold of lines in $F$. (From now we specify that if the two adjacent integers in the expression of flag varieties such as $F^{(i)}$ are equal, we keep only one of them. Please see Section \ref{flag} for the notations.)

We separate our discussion into two cases:

Case I: $d_i-1=d_{i-1}$ and $d_i+1=d_{i+1}$, then we have the natural projection $F\rightarrow F^{(i)}$;

Case II: $d_i-1\neq d_{i-1}$ or $d_i+1\neq d_{i+1}$, then we have the \emph{standard diagram}
\begin{align}
\xymatrix{
	F(d_1,\ldots,d_{i-1},d_i-1,d_i,d_i+1,d_{i+1},\ldots d_s)\ar[d]^{q_1}   \ar[r]^-{q_2} & F^{(i)}\\
	F=F(d_1,\ldots,d_s).
}
\end{align}
\begin{thm}\emph{(Theorem \ref{main})}
Fix integer $i, 1\le i \le s$. Let $E$ be an algebraic $r$-bundle over $F$ of type $\underline{a}_E^{(i)}=(a_1^{(i)},\ldots,a_r^{(i)}), a_1^{(i)}\geq\cdots\geq a_r^{(i)}$ with respect to $F^{(i)}$. If for some $t<r$,
 \[
	a_t^{(i)}-a_{t+1}^{(i)}\geq
	\left\{
	\begin{array}{ll}
	1, & and~F^{(i)}~ \text{is in Case I}\\
	2, & and ~F^{(i)}~ \text{is in Case II},
	\end{array}
	\right.\]
then there is a normal subsheaf $K\subseteq E$ of rank $t$ with the following properties: over the open set $V_E^{(i)}=q_1({q_2}^{-1}(U_E^{(i)}))\subseteq F$, the sheaf $K$ is a subbundle of $E$, which on the line $L\subseteq F$ given by $l\in U_E^{(i)}$ has the form\[
K|L\cong\oplus_{j=1}^{t}\mathcal{O}_L(a_j^{(i)}).
\]
\end{thm}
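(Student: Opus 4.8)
The plan is to follow the relative Grauert--Mülich strategy, adapted to the standard diagram, in two stages: first build a subbundle of the pulled-back bundle $q_1^*E$ that realizes the truncated splitting type along the fibres of $q_2$, and then descend it along $q_1$ to a subsheaf of $E$. Throughout I work over the open set $U_E^{(i)}\subseteq F^{(i)}$ on which the splitting type is the generic one $\underline a_E^{(i)}$, and write $L=q_1(q_2^{-1}(l))$ for the line attached to $l\in U_E^{(i)}$; recall that $q_2$ is a $\mathbb P^1$-bundle whose fibres are exactly these lines and that $q_1$ restricts to an isomorphism on each such fibre.

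\emph{Stage 1 (the relative subbundle).} Let $\mathcal O_{q_2}(1)$ denote the relative hyperplane bundle of $q_2$ and consider $\mathcal F:=q_1^*E\otimes \mathcal O_{q_2}(-a_t^{(i)})$. On each fibre $L$ one has $\mathcal F|_L\cong\bigoplus_j\mathcal O_L(a_j^{(i)}-a_t^{(i)})$, so that $h^0(L,\mathcal F|_L)=\sum_{j\le t}(a_j^{(i)}-a_t^{(i)}+1)$ is constant on $U_E^{(i)}$ precisely because $a_t^{(i)}>a_{t+1}^{(i)}$ (this is where the gap $\ge 1$ enters). By Grauert's base-change theorem $q_{2*}\mathcal F$ is locally free there and commutes with base change, and the image of the evaluation map $q_2^*q_{2*}\mathcal F\to \mathcal F$, twisted back by $\mathcal O_{q_2}(a_t^{(i)})$, is a subbundle $\widetilde K\subseteq q_1^*E$ over $q_2^{-1}(U_E^{(i)})$ with $\widetilde K|_L\cong\bigoplus_{j\le t}\mathcal O_L(a_j^{(i)})$. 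In Case~I we have $F'=F$ and $q_1=\mathrm{id}$, so $\widetilde K$ already lives inside $E$ and no further work is needed beyond extending it; the gap $\ge 1$ is exactly the hypothesis.

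\emph{Stage 2 (descent in Case II).} Here $q_1$ has positive-dimensional fibres and $\widetilde K$ must be shown to be of the form $q_1^*K$. Using the canonical relative connection on the pullback $q_1^*E$ along the fibres of $q_1$, the failure of $\widetilde K$ to be horizontal is measured by an $\mathcal O$-linear second fundamental form
\[
\psi\colon \widetilde K\longrightarrow (q_1^*E/\widetilde K)\otimes\Omega^1_{F'/F},
\]
and $\widetilde K$ descends as soon as $\psi=0$. Since the fibres of $q_2$ cover $q_2^{-1}(U_E^{(i)})$, it suffices to check $\psi|_L=0$ on every line $L$. The key computation is that of $\Omega^1_{F'/F}|_L$: writing the $q_1$-fibre as a product of two projective spaces coming from the choices of $V_{d_i-1}$ and $V_{d_i+1}$, and using that on $L=\mathbb P(V_{d_i+1}/V_{d_i-1})$ the tautological data give $V_{d_i}/V_{d_i-1}\cong\mathcal O_L(-1)$ and $V_{d_i+1}/V_{d_i}\cong\mathcal O_L(1)$, one finds that $\Omega^1_{F'/F}|_L$ is a direct sum of copies of $\mathcal O_L(-1)$ and $\mathcal O_L(1)$. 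Tensoring $\psi|_L$ against $\bigoplus_{j\le t}\mathcal O_L(a_j^{(i)})$ and $\bigoplus_{j>t}\mathcal O_L(a_j^{(i)})$, the components landing in the $\mathcal O_L(-1)$-summands vanish automatically because $a_j^{(i)}\ge a_t^{(i)}>a_{j'}^{(i)}$ for $j\le t<j'$, while the components landing in the $\mathcal O_L(1)$-summands are forced to vanish exactly when $a_t^{(i)}-a_{t+1}^{(i)}\ge 2$. This is the origin of the Case~II threshold, and it yields $\psi=0$ and hence a subsheaf $K\subseteq E$ over $V_E^{(i)}$ with $K|_L\cong\bigoplus_{j\le t}\mathcal O_L(a_j^{(i)})$.

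\emph{Finishing and the main obstacle.} In both cases one finally extends the subbundle over $V_E^{(i)}$ to a normal (saturated) subsheaf $K\subseteq E$ of rank $t$ on all of $F$, so that $E/K$ is torsion-free and $K$ agrees with the constructed subbundle over $V_E^{(i)}$; this is the standard saturation of the subsheaf extended from the big open set. I expect the main obstacle to be Stage~2: both the verification that the relative-connection/second-fundamental-form criterion for descent is valid in this flag-bundle setting (so that $\psi=0$ really does produce $K$ with $q_1^*K=\widetilde K$), and the precise identification of $\Omega^1_{F'/F}|_L$ as a sum of $\mathcal O_L(\pm 1)$'s in terms of the universal sub- and quotient bundles. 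Once this line-bundle computation is in hand, the numerical gap hypotheses translate directly into the vanishing of $\psi$, and the whole statement follows.
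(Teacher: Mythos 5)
Your proposal follows essentially the same route as the paper: construct $\widetilde K$ over $q_2^{-1}(U_E^{(i)})$ as the image of the evaluation map $q_2^*q_{2*}(\cdot)\to q_1^*E$ (after normalizing so the top $t$ summands are the ones with sections), saturate to get a normal subsheaf with torsion-free quotient, and then descend along $q_1$ in Case II; your "second fundamental form $\psi$" criterion is exactly the hypothesis $Hom(T_{X/Y},\mathcal{H}om(\widetilde K,\widetilde Q))=0$ of the Descent Lemma \ref{Descent} that the paper invokes, and the final extension to all of $F$ is done there by $K={q_1}_*\widetilde K$ together with Zariski's Main Theorem, which matches your saturation step. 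The one point where your sketch is off is the computation you yourself flag as the main obstacle: $\Omega^1_{\widetilde{F^{(i)}}/F}$ restricted to a line $\widetilde L$ is \emph{not} a mix of $\mathcal O_{\widetilde L}(\pm1)$'s but is $\mathcal O_{\widetilde L}(1)^{\oplus\, d_{i+1}-d_{i-1}-2}$ (Lemma \ref{rt}): the relative cotangent bundle is $\bigl(H_{d_i-1,d_i}^{\vee}\otimes q_2^{*}H_{d_{i-1},d_i-1}\bigr)\oplus\bigl(H_{d_i,d_i+1}\otimes q_2^{*}H_{d_i+1,d_{i+1}}^{\vee}\bigr)$, and along $\widetilde L$ the factors $H_{d_{i-1},d_i-1}$ and $H_{d_i+1,d_{i+1}}$ are constant while $H_{d_i-1,d_i}^{\vee}\cong\mathcal O_{\widetilde L}(1)$ and $H_{d_i,d_i+1}\cong\mathcal O_{\widetilde L}(1)$, so every summand has degree $+1$. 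This error is harmless for the conclusion (hypothetical $\mathcal O_{\widetilde L}(-1)$ summands would only make the vanishing easier, and the threshold $a_t^{(i)}-a_{t+1}^{(i)}\ge 2$ indeed comes from the degree-$+1$ summands), but the identification needs to be done correctly for the argument to close.
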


\begin{rem}
\emph{Please see Definition \ref{UEi} for the notation $U_E^{(i)}$. A coherent sheaf $\mathcal{F}$ is said to be "normal" in the sense of Barth (\cite{ref21}, p.128) if the restriction $\mathcal{F}(U)\rightarrow\mathcal{F}(U\backslash A)$ is bijective for every open subset $U$ and a closed subset $A$ of $U$ of codimension at least $2$ (cf. Definition \ref{normal}).}
\end{rem}
Using the above theorem, we generalize the Grauert-M$\ddot{\text{u}}$lich-Barth theorem (cf. \cite{ref21}, Theorem 1) to flag varieties as follows.

\begin{cor}\emph{(Corollary \ref{gap})}
Fix integer $i, 1\le i \le s$, for an i-semistable $r$-bundle $E$ over $F$ of type $\underline{a}_E^{(i)}=(a_1^{(i)},\ldots,a_r^{(i)}), a_1^{(i)}\geq\cdots\geq a_r^{(i)}$ with respect to $F^{(i)}$, we have \[
a_j^{(i)}-a_{j+1}^{(i)}\leq 1~~ \text{for all}~ j=1,\ldots, r-1.
\]
In particular, if $F^{(i)}$ is in Case I, then we have $a_j^{(i)}$'s are constant for all $1\leq j\leq r$.
\end{cor}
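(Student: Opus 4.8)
The plan is to argue by contradiction, converting any forbidden gap in the generic splitting type into a subsheaf whose $i$-slope is too large, which Theorem \ref{main} supplies for us. First I would record the elementary slope bookkeeping that drives everything. The $i$-slope $\mu_i$ is computed by restriction to a generic line $L$ of the $i$-th family: if a torsion-free sheaf $\mathcal{F}$ is a subbundle of $E$ on such an $L$ with $\mathcal{F}|L \cong \bigoplus_j \mathcal{O}_L(b_j)$, then its $i$-degree is $\sum_j b_j = c_1(\mathcal{F})\cdot[L]$ and $\mu_i(\mathcal{F}) = \tfrac{1}{\operatorname{rank}\mathcal{F}}\sum_j b_j$. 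In particular $\mu_i(E) = \tfrac{1}{r}\sum_{j=1}^r a_j^{(i)}$.

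Next I would treat the general bound $a_j^{(i)}-a_{j+1}^{(i)}\le 1$. Suppose to the contrary that $a_t^{(i)}-a_{t+1}^{(i)}\ge 2$ for some $t<r$. Then the hypothesis of Theorem \ref{main} holds whether $F^{(i)}$ is in Case I or Case II (since $2\ge 1$ and $2\ge 2$), so there is a normal subsheaf $K\subseteq E$ of rank $t$ which on the generic line $L$ given by $l\in U_E^{(i)}$ has the form $K|L\cong\bigoplus_{j=1}^t\mathcal{O}_L(a_j^{(i)})$, whence $\mu_i(K)=\tfrac{1}{t}\sum_{j=1}^t a_j^{(i)}$. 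I would then compare the two slopes by an averaging argument: writing $S=\sum_{j=1}^t a_j^{(i)}$ and $T=\sum_{j=t+1}^r a_j^{(i)}$, the inequality $\mu_i(K)>\mu_i(E)$ is equivalent to $(r-t)S>tT$. Since the $a_j^{(i)}$ are non-increasing we have $S\ge t\,a_t^{(i)}$ and $T\le(r-t)\,a_{t+1}^{(i)}$, and because $a_t^{(i)}>a_{t+1}^{(i)}$ this gives $(r-t)S\ge(r-t)t\,a_t^{(i)}>(r-t)t\,a_{t+1}^{(i)}\ge tT$. Thus $\mu_i(K)>\mu_i(E)$, contradicting $i$-semistability. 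The ``in particular'' for Case I follows from the same computation: there a gap $a_t^{(i)}-a_{t+1}^{(i)}\ge 1$ already produces $K$ via Theorem \ref{main}, and since $a_t^{(i)}>a_{t+1}^{(i)}$ still forces $\mu_i(K)>\mu_i(E)$, no nonzero gap can occur, so the $a_j^{(i)}$ are all equal.

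The hard part will not be the averaging inequality, which is routine, but justifying that the $i$-slope of the destabilizing sheaf $K$ is exactly $\tfrac{1}{t}\sum_{j=1}^t a_j^{(i)}$. This requires two things: that the degree entering the definition of $i$-semistability is precisely the one read off from the generic restriction to lines of the $i$-th family (equivalently, $c_1(\,\cdot\,)\cdot[L]$ for the $i$-th line class), and that the normality of $K$ prevents any correction to $c_1(K)$ coming from the locus where $K$ fails to be a subbundle. Here I would invoke the normality hypothesis of Theorem \ref{main}: $K$ is a subbundle of $E$ away from the closed set $F\setminus V_E^{(i)}$, and because $K$ is normal it is determined in codimension $1$, so its first Chern class—and hence its intersection with a generic line $[L]$—is computed from the subbundle datum $\bigoplus_{j=1}^t\mathcal{O}_L(a_j^{(i)})$ on the generic $L$. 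Once the $i$-degree of $K$ is identified with this line restriction, the slope comparison and the contradiction are immediate, and the corollary follows.
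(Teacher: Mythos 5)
Your proposal is correct and follows essentially the same route as the paper: invoke Theorem \ref{main} to produce the normal subsheaf $K$ of rank $t$ with $K|L\cong\bigoplus_{j=1}^{t}\mathcal{O}_L(a_j^{(i)})$ on generic lines, and conclude that $\mu^{(i)}(K)>\mu^{(i)}(E)$ contradicts $i$-semistability (with the gap threshold $1$ in Case I giving the constancy statement). The only difference is that you spell out the averaging inequality and the identification of $\mu^{(i)}(K)$ with the generic line restriction, both of which the paper leaves implicit (the latter being immediate from its Definition of $\mu^{(i)}$ for torsion-free sheaves).
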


\begin{rem}
\emph{A generalization of the Grauert-M$\ddot{\text{u}}$lich-Barth theorem to normal projective varieties in characteristic zero is proven in Huybrechts and Lehn's famous book (see \cite{ref24} Theorem 3.1.2). But the definition of our semistability or just slope is a little bit different from the usual ones. The canonical way is due to Mumford and Takemoto who embed the variety into a projective space and fix an ample divisor to define the degree and the slope of vector bundles. Another way so called Gieseker-semistability is by replacing the degree in the Mumford-Takemoto's definition with Hilbert polynomial to define the slope (see for example \cite{ref24} Definition 1.2.3). However these two ways both depend on the embeddings. Our definition of semistability for the flag varieties (see Definition \ref{i}) is intrinsic because the flag varieties are uniruled. If one embeds the flag variety into a projective space and takes ample divisors whose intersection is a single line on the flag variety, then our definition is the same as Mumford-Takemoto's definition.}
\end{rem}

\begin{definition}
If there exists some integer $i$ such that the splitting type of a vector bundle $E$ is uniform for any line $L\subseteq F$ given by $l\in F^{(i)}$, then $E$ is called uniform vector bundle with respect to $F^{(i)}$. $E$ is called strongly uniform on $F$ if the splitting type of $E$ is uniform for any line $L\subseteq F$.
\end{definition}

\begin{cor}\emph{(Corollary \ref{x})}
If $E$ is a strongly uniform $i$-semistable $(1\le i\le n-1)$ $r$-bundle over the complete flag $F$, then $E$ splits as a direct sum of line bundles. In addition $E|L\cong \mathcal{O}_L(a)^{\oplus r}$ for every line $L\subseteq F$, where $a\in \mathbb{Z}$.
\end{cor}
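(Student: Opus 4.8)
The plan is to first pin down the splitting type exactly, then reduce the statement to the single claim that a bundle trivial on every line of $F$ must be a sum of line bundles.

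First I would record a structural fact about the complete flag: here $F=F(1,2,\ldots,n-1)$, so $d_j=j$ for all $j$ and (with the conventions $d_0=0$, $d_n=n$) every adjacent triple $d_{j-1},d_j,d_{j+1}$ is consecutive, i.e. $d_j-1=d_{j-1}$ and $d_j+1=d_{j+1}$. Thus \emph{every} component $F^{(j)}$ of the variety of lines is in Case I. Applying Corollary \ref{gap} to the index $i$ for which $E$ is $i$-semistable, the ``in particular'' clause forces the type $\underline{a}_E^{(i)}$ to be constant, say $a_1^{(i)}=\cdots=a_r^{(i)}=a$, so $E|L\cong\mathcal{O}_L(a)^{\oplus r}$ for every line $L$ of $F^{(i)}$. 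Now strong uniformity enters: since the splitting type is the same for \emph{every} line of $F$ regardless of the component, it equals $(a,\ldots,a)$ for all lines, i.e. $E|L\cong\mathcal{O}_L(a)^{\oplus r}$ for every line $L\subseteq F$.

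Next I would twist away the integer $a$. Because $\mathrm{Pic}(F)\cong\mathbb{Z}^{\,n-1}$ is generated by line bundles $\mathcal{L}_1,\ldots,\mathcal{L}_{n-1}$ dual (under intersection) to the line classes of $F^{(1)},\ldots,F^{(n-1)}$, the line bundle $\mathcal{M}:=(\mathcal{L}_1\otimes\cdots\otimes\mathcal{L}_{n-1})^{\otimes a}$ has degree $a$ on every line of every component. Setting $E':=E\otimes\mathcal{M}^{\vee}$ reduces us to the case $a=0$, where $E'|L\cong\mathcal{O}_L^{\oplus r}$ for every line $L\subseteq F$. If such an $E'$ is forced to be trivial, then $E\cong\mathcal{M}^{\oplus r}$ is a direct sum of line bundles, and the balanced restriction $E|L\cong\mathcal{O}_L(a)^{\oplus r}$ is exactly the ``in addition'' assertion.

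It remains to prove the key step: a bundle $E'$ on $F$ that is trivial on every line is trivial. The mechanism I would use is evaluation plus simple connectivity. For each line $L$, the isomorphism $E'|L\cong\mathcal{O}_L^{\oplus r}$ makes evaluation $H^0(L,E'|L)\xrightarrow{\ \sim\ }E'_p$ an isomorphism at every $p\in L$, so $E'$ carries a \emph{canonical} identification $E'_p\cong E'_q$ for all $p,q$ on a common line. Since any two points of $F$ are joined by a chain of lines and $F$ is simply connected, these identifications have trivial monodromy and glue to a global trivialization; concretely, one shows $h^0(F,E')=r$ (a global section is constant along each line, hence determined by its value at one point, so $h^0\le r$, while the monodromy-free extensions of a basis of $E'_{x_0}$ give $r$ independent sections) and that the resulting map $H^0(F,E')\otimes\mathcal{O}_F\to E'$ is an isomorphism, being an isomorphism on the fibers over each line. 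Organizationally one may also run this by descent along a forgetful $\mathbb{P}^1$-bundle $\phi_j\colon F\to F'$ (dropping a flag-step with consecutive neighbours, whose fibers are the lines of $F^{(j)}$): fiber-triviality gives $E'\cong\phi_j^*\phi_{j*}E'$, and since every line of $F'$ lifts to a line of $F$ mapped isomorphically by $\phi_j$, triviality on lines descends, and one inducts on $\dim F$.

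The main obstacle is precisely this last step. Two points need care: (i) making the pointwise canonical identifications \emph{algebraic in families} and globalizing them — equivalently, proving that the evaluation map $H^0(F,E')\otimes\mathcal{O}_F\to E'$ is an isomorphism rather than merely a fiberwise bijection — which is where simple connectivity of $F$ is genuinely used; and (ii) in the descent formulation, the base case of a single Grassmannian $G(d,n)$ of Picard rank one, where no forgetful $\mathbb{P}^1$-bundle exists and one must argue directly (via the incidence correspondence $F(d-1,d,d+1)$, or again by the evaluation-plus-$\pi_1$ argument, which handles arbitrary rank). Everything preceding the key step is formal; the heart of the proof is establishing this ``trivial on all lines implies trivial'' principle on $F$.
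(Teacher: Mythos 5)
Your proposal is correct and follows essentially the same route as the paper: apply Corollary \ref{gap} (noting every component of lines on the complete flag is in Case I) together with strong uniformity to get $E|L\cong\mathcal{O}_L(a)^{\oplus r}$ for all lines, twist to reduce to the trivial splitting type, and invoke the ``trivial on all lines implies trivial'' principle. Your final step is exactly Proposition \ref{y} of the paper, and your descent formulation (induction along forgetful projections with Grassmannian base case via the incidence correspondence) is precisely how the paper proves it, so no new argument is needed there.
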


\section{Preliminaries}
Denote by $G$ the Grassmannian $G_k(d, n)$ of $d$-dimensional linear
subspaces in $V=k^n$, where $k$ is an algebraically closed field. Of course we may also consider $G_k(d, n)$ in its
projective guise as $\mathbb{G}_k(d-1, n-1)$, the Grassmannian of
projective $(d-1)$-planes in $\mathbb{P}_k^{n-1}$.

Let $\mathcal{V}:=G\times V$ be the trivial vector bundle of rank $n$ on $G$ whose fiber
at every point is the vector space $V$. We write $H_d$ for the $d$-subbundle of $\mathcal{V}$ whose
fiber at a point $[\Lambda]\in G$ is the subspace $\Lambda$ itself; that is,
\[(H_d)_{[\Lambda]}=\Lambda\subseteq V=\mathcal{V}_{[\Lambda]}.\]
$H_d$ is called the \emph{universal subbundle} on $G$; the rank $n-d$ quotient bundle $Q_{n-d}=\mathcal{V}/H_d$ is called the \emph{universal quotient bundle}, i.e.,
\begin{equation}\label{tau}
0\rightarrow H_d\rightarrow \mathcal{V}\rightarrow Q_{n-d}\rightarrow 0.
\end{equation}
We write line bundle $\mathcal{O}_G(1)$, which is called the \emph{Pl$\ddot{u}$cker bundle} on $G$, to be the pull back of $\mathcal{O}_{\mathcal{P}}(1)$ under the \emph{Pl$\ddot{u}$cker embedding } \[
\iota: G_k(d,n)\rightarrow\mathcal{P}=\mathbb{P}(\bigwedge^{d}k^n).
\]

\begin{definition}
Let $F:=F(d_1,\ldots,d_s)$ be the flag manifold parameterizing flags \[V_{d_1}\subseteq\cdots\subseteq V_{d_s}\subseteq k^n\]
where $dim(V_{d_i})=d_i, 1\le i \le s$. In particular, flag manifold $F(1, \ldots, n-1)$ is called the complete flag manifold.
\end{definition}
Given a flag $V_{d-1}\subseteq V_{d+1}\subseteq k^n$, the set of $d$-dimensional subspaces $W\subseteq k^n$ such that\[
V_{d-1}\subseteq W\subseteq V_{d+1}
\]
is the projectivization of the quotient space $V_{d+1}/V_{d-1}$, so the set is isomorphic to $\mathbb{P}_k^1$. It follows that the flag manifold $F(d-1,d+1)$ be the manifold of lines in $G$ and the flag manifold $F(d-1,d,d+1)$ can be written as\[
F(d-1,d,d+1)=\{(x,L)\in G\times F(d-1,d+1) |x\in L\}.
\]
Let $\bar{F}:=F(d-1,d,d+1)$, $F_1:=F(d-1,d)$ and $F_2:=F(d,d+1)$. Then we have the following two diagrams:
\begin{align}
	\xymatrix{
		F_1   \ar[rrdd]_{q_{11}} && \bar{F}\ar[dd]^{q_1}\ar[ll]_{pr_1} \ar[rr]^{pr_2}  && F_2 \ar[lldd]^{q_{12}} \\
		\\
		&&G
	}
\end{align}
and
\begin{align}
	\xymatrix{
		\bar{F}\ar[d]^{q_1}   \ar[r]^-{q_2} & F(d-1,d+1),\\
		G
	}
\end{align}
where all morphisms in the above diagrams are projections.

\begin{rem}\label{g}\emph{(\cite{ref15} Section~ 2.2)}
\emph{The mapping $q_{11}~(resp.~q_{12})$ identifies $F_1~(resp.~F_2)$ with the projective bundle $\mathbb{P}(H_{d})~(resp.~\mathbb{P}(Q_{n-d}^{\vee}))$ of $G$.
Let $\mathcal{H}_{H_{d}^{\vee}}~(resp.~  \mathcal{H}_{Q_{n-d}})$ be the tautological line bundle on $\bar{F}$ associated to $F_1~(resp.~ F_2)$, i.e. \[{pr_1}_{*}\mathcal{H}_{H_{d}^{\vee}}=\mathcal{O}_{F_1}(-1)~(resp.~ {pr_2}_{*}\mathcal{H}_{Q_{n-d}}=\mathcal{O}_{F_2}(-1)).\]}
\end{rem}
\begin{definition}
Let $X$ be a noetherian scheme in characteristic $p$ and $E$ be a locally free coherent sheaf on $X$. We define the associated projective space bundle $\mathbb{P}(E)$ as follows.
\[
\mathbb{P}(E)=Proj(\oplus_{l\geq 0}{S^{l}E}).
\]
\end{definition}
\begin{definition}
Let $X$ be a scheme in characteristic $p$. We define the absolute Frobenius map of $X$ to be $F_X:X\rightarrow X$ such that $F_X=id_X$ as a map between two topological spaces and on each open set $U$, $F_{X}^{\sharp}:\mathcal{O}_X(U)\rightarrow \mathcal{O}_X(U)$ takes $f$ to $f^p$ for any $f\in \mathcal{O}_X(U)$.
\end{definition}
\begin{definition}
Let $S$ be a scheme in characteristic $p$ and $X$ be an $S$-scheme. Consider the following diagram:
 \[
 \xymatrix{
 	X \ar[drr]_{F_{X/S}}  \ar[rrrrd]^{F_X}  \ar[rrddd]_f\\
 	&&X^{(p)}\ar[rr] \ar[dd]^{f'}&&X\ar[dd]_f\\
 	\\
 	&&S\ar[rr]^{F_S}&&S,
 }
 \]
where $X^{(p)}$ is defined as the fibre product of $X$ and $S$ in the diagram. The induced map $F_{X/S}$ is called the Frobenius morphism of $X$ relative to $S$.	
\end{definition}
\begin{rem}\emph{(\cite{ref6} Lemma ~1.5)}\label{F}
\emph{Let $S$ be a noetherian scheme in characteristic $p$, $E$ be a locally free coherent sheaf on $S$ and $X=\mathbb{P}(E)$. Then $X^{(p^m)}=\mathbb{P}(F^{m^{\ast}}E)$.}
\end{rem}
One of the key tools in studying vector bundles in characteristic $p$ is the following lemma of Katz.
\begin{lemma}\emph{(\cite{ref11} Lemma~ 1.4)}\label{ka}
	Let $X$ and $Y$ be two varieties smooth over $S$, a noetherian scheme in characteristic $p$, and $f$ be a $S$-morphism from $X$ to $Y$. If the induced map on differentials, $df:f^{\ast}\Omega_{Y/S}\rightarrow\Omega_{X/S}$ is the zero map, then $f$ can be factored through the relative Frobenius morphism $F_{X/S}$.
\end{lemma}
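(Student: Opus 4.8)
The plan is to prove the lemma by reducing to an affine, coordinate-ring computation and then reinterpreting the factorization as a containment of subrings. Since the relative Frobenius $F_{X/S}$ is a universal homeomorphism (on spaces it is the base change of $F_S$, which is the identity), a factorization $f=g\circ F_{X/S}$, if it exists, is forced to be unique; hence the problem is local on $X$ and $Y$, and the locally constructed factorizations automatically glue. So I would first pass to affines $X=\mathrm{Spec}\,B$, $Y=\mathrm{Spec}\,A$, $S=\mathrm{Spec}\,R$ with $B,A$ smooth $R$-algebras, and let $\phi:A\to B$ be the $R$-algebra homomorphism corresponding to $f$.

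Next I would make the relative Frobenius explicit. Writing $B^{(p)}=B\otimes_{R}R$ (the right factor viewed as an $R$-algebra via $F_R:r\mapsto r^p$), the comorphism $F_{X/S}^{\sharp}:B^{(p)}\to B$ is $b\otimes r\mapsto b^{p}\,\pi(r)$, where $\pi:R\to B$ is the structure map; its image is the $R$-subalgebra $B^{p}\pi(R)\subseteq B$ generated by $p$-th powers. Because $X/S$ is smooth, $F_{X/S}$ is finite and faithfully flat, so $F_{X/S}^{\sharp}$ is injective and hence a ring isomorphism onto $B^{p}\pi(R)$. Consequently a factorization exists precisely when $\phi(A)\subseteq B^{p}\pi(R)$: in that case $\psi:=(F_{X/S}^{\sharp})^{-1}\circ\phi:A\to B^{(p)}$ is a well-defined $R$-algebra map, and $g:=\mathrm{Spec}\,\psi$ satisfies $f=g\circ F_{X/S}$.

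It then remains to translate the hypothesis and prove the containment. The assumption $df=0$ unwinds to $d(\phi(a))=0$ in $\Omega_{X/S}$ for every $a\in A$, since $f^{\ast}\Omega_{Y/S}$ is generated as a $B$-module by the classes $da\otimes 1$ and $df(da\otimes 1)=d(\phi(a))$. So the whole lemma reduces to the key fact
\[
\{\, b\in B : d_{B/R}\,b=0 \,\}=B^{p}\pi(R)=\mathrm{Im}(F_{X/S}^{\sharp}).
\]
The inclusion $\supseteq$ is immediate from $d(b^{p}\pi(r))=p\,b^{p-1}db=0$. For $\subseteq$ I would use the cotangent sequence of $X\xrightarrow{F_{X/S}}X^{(p)}\to S$: since $dF_{X/S}=0$, the sequence yields an isomorphism $\Omega_{X/X^{(p)}}\cong\Omega_{X/S}$ under which the relative derivation $d_{B/B^{(p)}}$ is carried to $d_{B/R}$, so $\ker(d_{B/R})=\ker(d_{B/B^{(p)}})$; then, using smoothness to write $B$ étale-locally as a free $B^{(p)}$-module on a $p$-basis of monomials $t^{\alpha}$ with $0\le\alpha_i<p$ and $\Omega_{B/B^{(p)}}$ free on the $dt_i$, a direct computation of $d_{B/B^{(p)}}(\sum_\alpha b_\alpha t^\alpha)$ forces $\alpha_i b_\alpha=0$ for all $i$, hence $b_\alpha=0$ for $\alpha\ne 0$, so $b\in B^{(p)}$.

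I expect this last identification $\ker(d_{B/R})=\mathrm{Im}(F_{X/S}^{\sharp})$ to be the main obstacle, since it is exactly where smoothness of $X/S$ is indispensable and where one must control the purely inseparable extension $B/B^{(p)}$; the reductions to affines and the injectivity of $F_{X/S}^{\sharp}$ are comparatively formal. Care is also needed to reconcile the étale-local verification of the key fact with the Zariski-local statement, which I would handle by observing that both $\Omega_{B/R}$ and the formation of $B^{(p)}$ commute with étale base change (for which the relative Frobenius is an isomorphism), so the two kernels may be compared after such a cover.
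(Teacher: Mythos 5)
The paper offers no proof of this lemma at all—it is imported verbatim from Katz (\cite{ref11}, Lemma 1.4)—and your argument is, in substance, Katz's standard one: reduce to affines, note that $df=0$ means $\phi(A)\subseteq\ker(d_{B/R})$, and identify $\ker(d_{B/R})$ with $\mathrm{Im}(F_{X/S}^{\sharp})=B^{p}\pi(R)$, smoothness entering exactly where you say, through the étale-local freeness of $B$ over $B^{(p)}$ on the monomial basis $t^{\alpha}$, $0\le\alpha_i<p$. Your write-up is correct, including the two delicate points (injectivity of $F_{X/S}^{\sharp}$ via finite flatness of the relative Frobenius for smooth morphisms, and the étale-descent needed to compare $\ker(d_{B/R})$ with $B^{p}\pi(R)$ Zariski-locally), so there is nothing to add.
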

\begin{definition}
Denote by $H_i$ the universal subbundle on the complete flag manifold $F(1,\cdots,n-1)$ of rank $i(1\leq i \leq n)$ and $X_i=c_1(H_i/H_{i-1})$.
\end{definition}
Although M.Guyot's paper \cite{ref15} is based on the field of characteristic zero, the following conclusions are also true in positive characteristic.
\begin{lemma}\emph{(\cite{ref15} Theorem ~3.2)}\label{zq}
Suppose $\mathbf{Z}[X_1,\ldots,X_{d-1};X_d;X_{d+1}]$ to be the ring of polynomials in $d+1$ variables with integral coefficients symmetrical in $X_1,\ldots,X_{d-1}$ and  $A(\bar{F})$ is the Chow ring of $\bar{F}$.
	The natural morphism $\mathbf{Z}[X_1,\ldots,X_{d-1};X_d;X_{d+1}]\rightarrow A(\bar{F})$ is surjective and its kernel is the ideal generated by
	$\sum_{i}(X_1,\ldots,X_{d+1}), (n-d-1)<i\leq n$, where\[\sum_{i}(X_1,\ldots,X_{d+1}):=\sum_{\alpha_1+\cdots+\alpha_{d+1}=i}X_1^{\alpha_1}\cdots X_{d+1}^{\alpha_{d+1}},\] and $\alpha_1,\ldots,\alpha_{d+1}$ are nonnegative integers.
 \end{lemma}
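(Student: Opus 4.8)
The plan is to verify that the presentation of $A(\bar F)$ is forced by the structure of $\bar F$ as a tower of projective bundles, using only tools -- Chern classes in the Chow ring, the Whitney sum formula, and the projective bundle formula -- that hold over an arbitrary base and are in particular insensitive to the characteristic of $k$ (see Fulton, \emph{Intersection Theory}); this is precisely why Guyot's characteristic-zero computation survives unchanged. I first fix the natural map. On $\bar F$ the universal subbundles form a flag $H_{d-1}\subseteq H_d\subseteq H_{d+1}\subseteq\mathcal V=\mathcal O_{\bar F}^{\,n}$, and I send the elementary symmetric functions of $X_1,\ldots,X_{d-1}$ to $c_1(H_{d-1}),\ldots,c_{d-1}(H_{d-1})$ and the two remaining variables to $X_d=c_1(H_d/H_{d-1})$ and $X_{d+1}=c_1(H_{d+1}/H_d)$. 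As the source ring is symmetric only in $X_1,\ldots,X_{d-1}$, this is well defined.

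To get surjectivity I view $\bar F$ over the Grassmannian $G(d+1,n)$: the forgetful maps $\bar F\to F(d,d+1)\to G(d+1,n)$ realize $\bar F$ as a tower of two projective bundles associated with $H_d$ and $H_{d+1}$. By the projective bundle formula
\[
A(\mathbb P(\mathcal E))=A(Y)[\xi]\big/\big(\textstyle\sum_{i=0}^{m}c_i(\mathcal E)\,\xi^{m-i}\big),\qquad m=\operatorname{rank}\mathcal E,
\]
$A(\bar F)$ is a free $A(G(d+1,n))$-module on monomials in the two hyperplane classes. Moreover $A(G(d+1,n))$ is itself a free $\mathbf Z$-module on Schubert classes -- the Schubert cells form an affine paving, which exists and computes the Chow groups over any field -- and is generated by the Chern classes of $H_{d+1}$. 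Hence $A(\bar F)$ is a free $\mathbf Z$-module generated by the Chern roots of the universal subquotients, so the natural map is surjective. Every step here is uniform in the characteristic.

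For the kernel I apply the Whitney formula to $0\subseteq H_{d-1}\subseteq H_d\subseteq H_{d+1}\subseteq\mathcal V$. Since $\mathcal V$ is trivial, $c(\mathcal V)=1$, so, writing $h_i:=\sum_i(X_1,\ldots,X_{d+1})$,
\[
c(\mathcal V/H_{d+1})=\Big(\textstyle\prod_{i=1}^{d+1}(1+X_i)\Big)^{-1}=\sum_{i\ge 0}(-1)^{i}h_i .
\]
As $\mathcal V/H_{d+1}$ has rank $n-d-1$, its Chern classes vanish in all degrees $>n-d-1$, yielding the relations $h_i=0$ for every $i>n-d-1$. That these are generated by the finitely many with $n-d\le i\le n$ follows from the identity $\sum_{j=0}^{d+1}(-1)^{j}e_j\,h_{m-j}=0$, where $e_j=e_j(X_1,\ldots,X_{d+1})$ vanishes for $j>d+1$: taking $m=n+1$ expresses $h_{n+1}$ through $h_n,\ldots,h_{n-d}$ (indices in $[n-d,n]$), and induction on $m$ handles all $m>n$.

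It remains to show the kernel equals $I=(h_i:n-d\le i\le n)$ rather than merely containing it, and I expect this to be the main technical point. The projective bundle tower computes the $\mathbf Z$-rank of $A(\bar F)$ in each codimension, and one checks that $\mathbf Z[X_1,\ldots,X_{d-1};X_d;X_{d+1}]/I$ is free over $\mathbf Z$ with the same rank in each degree, e.g. by exhibiting a standard monomial basis via the recursion above. A graded surjection of free $\mathbf Z$-modules that matches ranks in every degree is an isomorphism, giving $\ker=I$. Since the geometric inputs are characteristic-free, the only genuine work is this combinatorial verification that the $d+1$ symmetric relations cut the polynomial ring down to exactly the size of $A(\bar F)$.
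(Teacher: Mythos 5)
The paper offers no proof of this lemma at all: it is imported verbatim from Guyot (\cite{ref15}, Theorem 3.2), with only the remark that the computation of the Chow ring is characteristic-independent. So there is no internal argument to compare yours against, and your proposal should be judged on its own terms. As such, it is the standard Borel-type presentation argument and its skeleton is sound: realizing $\bar F$ as a tower of projective bundles over $G(d+1,n)$ gives surjectivity and the graded $\mathbf Z$-rank of $A(\bar F)$ (the affine paving by Schubert cells indeed makes this characteristic-free), and Whitney applied to $H_{d+1}\subseteq\mathcal V$ together with the recursion $\sum_{j=0}^{d+1}(-1)^j e_j h_{m-j}=0$ correctly shows that the ideal $I=(h_{n-d},\dots,h_n)$ lies in the kernel and contains all $h_i$ with $i>n-d-1$.

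The one point to press you on is the step you yourself flag as ``the main technical point'': that $\mathbf Z[e_1,\dots,e_{d-1},X_d,X_{d+1}]/I$ is $\mathbf Z$-free with the same rank as $A(\bar F)$ in each degree. As written this is an assertion, not a proof, and since surjectivity and $I\subseteq\ker$ are the easy halves, this is essentially the entire content of the theorem. It does check out --- the $d+1$ generators $h_{n-d},\dots,h_n$ form a regular sequence in the polynomial ring on $d+1$ variables $e_1,\dots,e_{d-1},X_d,X_{d+1}$, so the quotient is a graded complete intersection with Hilbert series $\prod_{i=n-d}^{n}(1-t^i)\big/\bigl((1-t)^2\prod_{j=1}^{d-1}(1-t^j)\bigr)$, whose value at $t=1$ is $n!/\bigl((d-1)!\,(n-d-1)!\bigr)$, matching $\operatorname{rk}A(\bar F)=\binom{n}{d+1}d(d+1)$ degree by degree --- but you should either carry out this regular-sequence/Hilbert-series computation or exhibit the standard monomial basis explicitly; ``one checks'' is not enough for the step on which everything rests. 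With that filled in, your proof is complete and, unlike the paper, self-contained.
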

\begin{lemma}\emph{(\cite{ref15} Lemma~ 5.1)}\label{pic}
	The Picard group of $\bar{F}$ is generated by ${q_1}^{\ast}\mathcal{O}_{G}(1)$, $\mathcal{H}_{H_{d}^{\vee}}$ and $\mathcal{H}_{Q_{n-d}}$. The Chern polynomial \[c_{\mathcal{H}_{H_{d}^{\vee}}}(T)=T+X_d, ~c_{\mathcal{H}_{Q_{n-d}}}(T)=T-X_{d+1}.\]Here
	$c_{E}(T):=T^r-c_1(E)T^{r-1}+\cdots+(-1)^{r}c_r(E)$ is the chern polynomial of rank $r$-bundle $E$.
\end{lemma}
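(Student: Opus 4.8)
The plan is to realize $\bar{F}$ as an iterated projective bundle over $G$ and then read off both the Picard group and the Chern classes from the projective bundle formula. First I would observe, using the fibre descriptions recorded before Remark \ref{g}, that a point of $\bar{F}=F(d-1,d,d+1)$ over $[\Lambda]\in G$ amounts to an independent choice of a hyperplane $V_{d-1}\subseteq\Lambda$ and a line $\Lambda\subseteq V_{d+1}$, i.e. a point of the $q_{11}$-fibre together with a point of the $q_{12}$-fibre. Hence $\bar{F}=F_1\times_G F_2=\mathbb{P}(H_d)\times_G\mathbb{P}(Q_{n-d}^{\vee})$, and via $pr_1$ it is the projective bundle $\mathbb{P}(q_{11}^{\ast}Q_{n-d}^{\vee})$ over $F_1$, which is itself the projective bundle $\mathbb{P}(H_d)$ over $G$. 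Throughout I use the Grothendieck convention $\mathbb{P}(E)=\mathrm{Proj}(\oplus_{l\ge 0}S^lE)$ fixed earlier, under which $\mathbb{P}(H_d)$ parametrizes rank-one quotients of $H_d$ (equivalently hyperplanes), matching $F_1$, and $\mathbb{P}(Q_{n-d}^{\vee})$ parametrizes lines in $Q_{n-d}$, matching $F_2$.

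For the Picard group I would apply the projective-bundle theorem twice. The Grassmannian $G=G_k(d,n)$ has $\mathrm{Pic}(G)=\mathbb{Z}\cdot\mathcal{O}_G(1)$. The bundle $q_{11}:F_1=\mathbb{P}(H_d)\to G$ then gives $\mathrm{Pic}(F_1)=q_{11}^{\ast}\mathrm{Pic}(G)\oplus\mathbb{Z}\cdot\mathcal{O}_{F_1}(1)$, and the bundle $pr_1:\bar{F}=\mathbb{P}(q_{11}^{\ast}Q_{n-d}^{\vee})\to F_1$ gives $\mathrm{Pic}(\bar{F})=pr_1^{\ast}\mathrm{Pic}(F_1)\oplus\mathbb{Z}\cdot\mathcal{O}_{\bar{F}/F_1}(1)$, where $\mathcal{O}_{\bar{F}/F_1}(1)$ is the relative hyperplane bundle of $pr_1$. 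Combining the two splittings yields $\mathrm{Pic}(\bar{F})=\mathbb{Z}\cdot q_1^{\ast}\mathcal{O}_G(1)\oplus\mathbb{Z}\cdot\mathcal{H}_{H_d^{\vee}}\oplus\mathbb{Z}\cdot\mathcal{H}_{Q_{n-d}}$, which is the asserted generation (in fact a free decomposition). Here the two new $\mathbb{Z}$-factors are generated by $\mathcal{H}_{H_d^{\vee}}=pr_1^{\ast}\mathcal{O}_{F_1}(-1)$ and $\mathcal{H}_{Q_{n-d}}=pr_2^{\ast}\mathcal{O}_{F_2}(-1)$ of Remark \ref{g} (a line bundle and its inverse generate the same subgroup); for the second factor I use that under the base change $\bar{F}=F_1\times_G F_2$ the relative hyperplane bundle of $pr_1$ pulls back from the relative hyperplane bundle of $q_{12}:F_2\to G$, namely $pr_2^{\ast}\mathcal{O}_{F_2}(1)$. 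The compatibility ${pr_1}_{\ast}\mathcal{H}_{H_d^{\vee}}=\mathcal{O}_{F_1}(-1)$ of Remark \ref{g} then follows from the projection formula together with ${pr_1}_{\ast}\mathcal{O}_{\bar{F}}=\mathcal{O}_{F_1}$.

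Finally I would compute the first Chern classes by identifying the two tautological bundles with flag quotient line bundles. On $F_1=\mathbb{P}(H_d)$ the tautological quotient $q_{11}^{\ast}H_d\twoheadrightarrow\mathcal{O}_{F_1}(1)$ is, fibrewise, the map $V_d\to V_d/V_{d-1}$, so $\mathcal{O}_{F_1}(1)\cong H_d/H_{d-1}$ and hence $c_1(\mathcal{H}_{H_d^{\vee}})=c_1\bigl(pr_1^{\ast}\mathcal{O}_{F_1}(-1)\bigr)=-X_d$; dually, on $F_2=\mathbb{P}(Q_{n-d}^{\vee})$ the tautological sub $\mathcal{O}_{F_2}(-1)\hookrightarrow q_{12}^{\ast}Q_{n-d}$ is fibrewise the line $V_{d+1}/V_d\subseteq k^n/V_d$, so $\mathcal{O}_{F_2}(-1)\cong H_{d+1}/H_d$ and $c_1(\mathcal{H}_{Q_{n-d}})=X_{d+1}$, using $X_i=c_1(H_i/H_{i-1})$. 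Since the Chern polynomial of a line bundle $L$ is $c_L(T)=T-c_1(L)$, this gives $c_{\mathcal{H}_{H_d^{\vee}}}(T)=T+X_d$ and $c_{\mathcal{H}_{Q_{n-d}}}(T)=T-X_{d+1}$, as claimed. I expect the main obstacle to be essentially one of bookkeeping: keeping the Grothendieck $\mathrm{Proj}(\mathrm{Sym})$ normalization consistent throughout so that sub- versus quotient-bundles and the signs in $\mathcal{O}(\pm1)$ are matched correctly, and verifying that the quotients $H_d/H_{d-1}$ and $H_{d+1}/H_d$ formed on $\bar{F}$ really represent the classes $X_d$ and $X_{d+1}$ of the definition. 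None of these steps uses the characteristic of $k$, so the argument is valid in positive characteristic, as the text asserts.
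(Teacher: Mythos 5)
The paper itself gives no proof of this lemma---it is quoted verbatim from Guyot (\cite{ref15}, Lemma 5.1)---so there is no internal argument to compare against; judged on its own, your proof is correct and complete. Your route, realizing $\bar{F}=F_1\times_G F_2=\mathbb{P}(H_d)\times_G\mathbb{P}(Q_{n-d}^{\vee})$ and applying the projective-bundle theorem twice (once for $q_{11}:\mathbb{P}(H_d)\to G$ with $\mathrm{Pic}(G)=\mathbb{Z}\cdot\mathcal{O}_G(1)$, once for $pr_1:\mathbb{P}(q_{11}^{\ast}Q_{n-d}^{\vee})\to F_1$, with the base-change identification $\mathcal{O}_{\bar{F}/F_1}(1)=pr_2^{\ast}\mathcal{O}_{F_2}(1)$), is the standard derivation and is fully consistent with the paper's conventions: your reading $\mathcal{H}_{H_d^{\vee}}=pr_1^{\ast}\mathcal{O}_{F_1}(-1)$ is indeed what Remark \ref{g} intends (the projection formula gives ${pr_1}_{\ast}pr_1^{\ast}\mathcal{O}_{F_1}(-1)=\mathcal{O}_{F_1}(-1)$), and your tautological identifications $\mathcal{O}_{F_1}(1)\cong H_d/H_{d-1}$ and $\mathcal{O}_{F_2}(-1)\cong H_{d+1}/H_d$ are exactly the ones the paper itself uses later (in the proof of Lemma \ref{rt}, where $H_{d_i-1,d_i}={pr_1}^{\ast}\mathcal{O}_{\widetilde{F_1^{(i)}}}(1)$), so the signs $c_1(\mathcal{H}_{H_d^{\vee}})=-X_d$ and $c_1(\mathcal{H}_{Q_{n-d}})=X_{d+1}$ come out matching the stated Chern polynomials under the convention $c_L(T)=T-c_1(L)$. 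Your conclusion also agrees with the presentation of $A(\bar{F})$ quoted in Lemma \ref{zq}: the relations there begin in degree $n-d\ge 2$, so $A^1(\bar{F})$ is free of rank $3$, corroborating your free decomposition of $\mathrm{Pic}(\bar{F})$. Two small points worth making explicit: the projective-bundle theorem needs both fibre dimensions positive, which holds here since $2\le d\le n-d$ gives $d-1\ge 1$ and $n-d-1\ge 1$; and, as you note, nothing in the argument uses the characteristic, which is precisely why the paper may invoke Guyot's lemma in characteristic $p$.
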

\begin{lemma}\emph{(\cite{ref15} Proposition ~2.3, 2.5)} \label{ta}
	The restriction of the relative cotangent bundle $\Omega_{\bar{F}/G}$ to every $q_2$-fibre $\widetilde{L}={q_2}^{-1}(L)\subseteq F$ has the following form\[
	\Omega_{\bar{F}/G}|\widetilde{L}=\mathcal{O}_{\widetilde{L}}(1)^{\oplus n-2}.
	\]
\end{lemma}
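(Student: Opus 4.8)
The plan is to exploit the product structure of $\bar{F}$ over $G$ and reduce the computation to two relative Euler sequences. A point of $\bar{F}=F(d-1,d,d+1)$ is a flag $V_{d-1}\subseteq V_d\subseteq V_{d+1}$, and fixing the middle term $V_d\in G$ leaves the choices of $V_{d-1}\subseteq V_d$ and $V_{d+1}\supseteq V_d$ completely independent. Hence $\bar{F}\cong F_1\times_G F_2$ with the projections $pr_1,pr_2$ of the first diagram, and the standard splitting of relative differentials for a fibre product gives
\[
\Omega_{\bar{F}/G}\cong pr_1^{\ast}\Omega_{F_1/G}\oplus pr_2^{\ast}\Omega_{F_2/G}.
\]
Since this identity holds in any characteristic, the whole argument will be characteristic-free, and it reduces the problem to restricting each summand to $\widetilde{L}$.

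Next I would identify the two relative cotangent bundles through the relative Euler sequence of a projective bundle. Using Remark \ref{g} and the Chern polynomials of Lemma \ref{pic} to pin down the tautological line bundles, on $F_1=\mathbb{P}(H_d)$ the tautological quotient is $\mathcal{O}_{F_1}(1)=V_d/V_{d-1}$ with kernel the universal subbundle $V_{d-1}$, so that $\Omega_{F_1/G}\cong V_{d-1}\otimes\mathcal{O}_{F_1}(-1)$; on $F_2=\mathbb{P}(Q_{n-d}^{\vee})$ one has $\mathcal{O}_{F_2}(-1)=V_{d+1}/V_d$ and the Euler-sequence kernel is $(k^n/V_{d+1})^{\vee}$, so that $\Omega_{F_2/G}\cong (k^n/V_{d+1})^{\vee}\otimes\mathcal{O}_{F_2}(-1)$.

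Then I would analyze the geometry of the fibre $\widetilde{L}=q_2^{-1}(L)$. Since $L\in F(d-1,d+1)$ fixes a flag $V_{d-1}\subseteq V_{d+1}$ and $\widetilde{L}$ parameterizes the intermediate $V_d$, we have $\widetilde{L}\cong\mathbb{P}(V_{d+1}/V_{d-1})\cong\mathbb{P}^1$, and from the tautological sequence on this $\mathbb{P}^1$ one reads off $V_d/V_{d-1}\cong\mathcal{O}_{\widetilde{L}}(-1)$ and $V_{d+1}/V_d\cong\mathcal{O}_{\widetilde{L}}(1)$. The crucial point is that along $\widetilde{L}$ the spaces $V_{d-1}$ and $V_{d+1}$ are constant, so $V_{d-1}|_{\widetilde{L}}$ and $(k^n/V_{d+1})^{\vee}|_{\widetilde{L}}$ are trivial bundles of ranks $d-1$ and $n-d-1$. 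Restricting the two expressions above, and using $\mathcal{O}_{F_1}(-1)|_{\widetilde{L}}\cong\mathcal{O}_{\widetilde{L}}(1)$ and $\mathcal{O}_{F_2}(-1)|_{\widetilde{L}}\cong\mathcal{O}_{\widetilde{L}}(1)$, each summand becomes a sum of copies of the ample generator:
\[
pr_1^{\ast}\Omega_{F_1/G}|_{\widetilde{L}}\cong\mathcal{O}_{\widetilde{L}}(1)^{\oplus(d-1)},\qquad pr_2^{\ast}\Omega_{F_2/G}|_{\widetilde{L}}\cong\mathcal{O}_{\widetilde{L}}(1)^{\oplus(n-d-1)},
\]
and summing the $(d-1)+(n-d-1)=n-2$ copies yields $\Omega_{\bar{F}/G}|_{\widetilde{L}}\cong\mathcal{O}_{\widetilde{L}}(1)^{\oplus(n-2)}$.

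I expect the main obstacle to be the bookkeeping of projective-bundle conventions: correctly matching the Grothendieck convention $\mathbb{P}(E)=\mathrm{Proj}(\oplus_{l\geq0}S^lE)$ adopted in the paper with the geometric identifications of $\mathcal{O}_{F_i}(\pm1)$ as universal quotient or sub line bundles, and tracking the single twist in the relative Euler sequence so that the restricted degrees come out as $+1$ rather than $-1$. Once the identifications of $\mathcal{O}_{F_1}(1)$, $\mathcal{O}_{F_2}(1)$ and of the restrictions $V_d/V_{d-1}$, $V_{d+1}/V_d$ to $\widetilde{L}$ are fixed consistently, cross-checked against the Chern polynomials $c_{\mathcal{H}_{H_d^{\vee}}}(T)=T+X_d$ and $c_{\mathcal{H}_{Q_{n-d}}}(T)=T-X_{d+1}$ of Lemma \ref{pic}, the remaining computation is routine.
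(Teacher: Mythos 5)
Your proof is correct: the identifications $\Omega_{F_1/G}\cong V_{d-1}\otimes\mathcal{O}_{F_1}(-1)$ and $\Omega_{F_2/G}\cong (k^n/V_{d+1})^{\vee}\otimes\mathcal{O}_{F_2}(-1)$ via the relative Euler sequences, the splitting $\Omega_{\bar{F}/G}\cong pr_1^{\ast}\Omega_{F_1/G}\oplus pr_2^{\ast}\Omega_{F_2/G}$ over the fibre product $\bar{F}\cong F_1\times_G F_2$, the triviality of $V_{d-1}$ and $(k^n/V_{d+1})^{\vee}$ along $\widetilde{L}$, the restriction degrees $V_d/V_{d-1}\cong\mathcal{O}_{\widetilde{L}}(-1)$, $V_{d+1}/V_d\cong\mathcal{O}_{\widetilde{L}}(1)$, and the count $(d-1)+(n-d-1)=n-2$ all check out. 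The paper itself only cites Guyot for this lemma, but its proof of the generalization to flag varieties (Lemma \ref{rt}) follows exactly your plan --- realizing the double flag as a fibre product of two projective bundles, extracting the two relative (co)tangent bundles from Euler-type sequences (there obtained by pushing forward the universal-bundle sequences and using evaluation maps, rather than quoting the Euler-sequence kernel directly as you do), and restricting to $\widetilde{L}$ --- so your argument is precisely the Grassmannian specialization of the paper's own method.
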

\section{Uniform vector bundles of rank r(r<d) on G }
In this section, we suppose $k$ is an algebraically closed field of arbitrary characteristic.
\begin{prop}\label{e}
Let $E$ be an algebraic vector bundle of rank $r$ over $G=G_k(d,n)$ and assume $E|L=\mathcal{O}_{L}^{\oplus r}$ for every line $L\subseteq G$. Then $E$ is trivial.	
\end{prop}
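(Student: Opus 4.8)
The plan is to reduce the statement to global generation and then invoke the elementary fact that a bundle which is globally generated together with its dual must be trivial. Note first that if $E|_L\cong\mathcal{O}_L^{\oplus r}$ then also $E^{\vee}|_L\cong\mathcal{O}_L^{\oplus r}$ for every line $L\subseteq G$, so $E^{\vee}$ satisfies the same hypothesis as $E$ and every step below may be run symmetrically for $E^{\vee}$.

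I would first isolate the splitting mechanism. Since $G$ is projective and connected, $H^0(G,\mathcal{O}_G)=k$. Suppose $E$ and $E^{\vee}$ are both globally generated. Fix $x\in G$ and $0\ne v\in E_x$, choose $\phi\in E_x^{\vee}$ with $\phi(v)=1$, and lift $v,\phi$ to global sections $s\in H^0(G,E)$ and $t\in H^0(G,E^{\vee})$. The pairing $\langle s,t\rangle$ lies in $H^0(G,\mathcal{O}_G)=k$ and equals $1$ at $x$, hence is identically $1$; therefore $s$ is nowhere vanishing and $t$ splits the inclusion $\mathcal{O}_G\xrightarrow{\,s\,}E$, giving $E\cong\mathcal{O}_G\oplus\ker t$. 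As a direct summand of a globally generated bundle is again globally generated, and the same holds for the duals, descending induction on $r$ yields $E\cong\mathcal{O}_G^{\oplus r}$. Thus everything comes down to proving that $E$ (and, symmetrically, $E^{\vee}$) is globally generated. This part is characteristic-free, as required in this section.

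To that end I would exploit the double fibration $G\xleftarrow{q_1}\bar{F}\xrightarrow{q_2}Y$ with $\bar{F}=F(d-1,d,d+1)$ and $Y=F(d-1,d+1)$, whose $q_2$-fibres $\widetilde{L}=q_2^{-1}(L)$ are precisely the lines of $G$ and are mapped isomorphically onto $L$ by $q_1$. Hence $(q_1^{\ast}E)|_{\widetilde{L}}\cong E|_L\cong\mathcal{O}_{\widetilde{L}}^{\oplus r}$ on every fibre, so $h^0(\widetilde{L},(q_1^{\ast}E)|_{\widetilde{L}})=r$ and $h^1=0$ are constant. Cohomology and base change then make $E':=q_{2\ast}q_1^{\ast}E$ a rank $r$ bundle on $Y$ with $q_2^{\ast}E'\xrightarrow{\sim}q_1^{\ast}E$. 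Because the fibres of $q_1$ and $q_2$ are connected with $H^0(\mathcal{O})=k$, the projection formula and the Leray sequence identify $H^0(G,E)\cong H^0(\bar{F},q_1^{\ast}E)\cong H^0(\bar{F},q_2^{\ast}E')\cong H^0(Y,E')$, so producing sections of $E$ is the same as producing sections of $E'$. Independently, the two Schubert families of linear subspaces---the $\mathbb{P}^d=\{W\subseteq V_{d+1}\}$ and the $\mathbb{P}^{n-d}=\{W\supseteq V_{d-1}\}$, both of dimension $\ge 2$ in the relevant range $2\le d\le n-d$---cover $G$, contain all lines of $G$, and are cut out linearly in the Pl\"ucker embedding; restricting $E$ to any one of them gives a bundle trivial on all its lines, hence trivial by the classical projective-space case. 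Consequently $E$ admits a local frame of sections on each member of a covering family of projective subspaces.

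The main obstacle is exactly the passage from these local frames to global sections, that is, the surjectivity of $H^0(G,E)\to E_x$. A value $v\in E_x$ extends to a section over a sub-$\mathbb{P}^m$ through $x$, and the problem is to extend that section to all of $G$; this is governed by the vanishing of suitable $H^1$-groups twisted by ideal sheaves and is considerably more delicate than on $\mathbb{P}^n$, because $G$ carries two rulings by lines and the $q_1$-fibres $\mathbb{P}^{d-1}\times\mathbb{P}^{n-d-1}$ are positive dimensional. I expect this to be where the Grassmannian geometry genuinely enters: the structure $\Omega_{\bar{F}/G}|_{\widetilde{L}}\cong\mathcal{O}_{\widetilde{L}}(1)^{\oplus n-2}$ of Lemma \ref{ta}, the Picard description of Lemma \ref{pic}, and the Chow-ring computation of Lemma \ref{zq} would be used to run the Elencwajg-Hirschowitz-Schneider type induction and to control these cohomology groups, which explains why the projective-space argument cannot simply be transcribed and why the hypothesis must be imposed on every line.
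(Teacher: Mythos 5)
Your opening reduction is sound: if both $E$ and $E^{\vee}$ are globally generated, the pairing argument does produce a trivial direct summand, and the complement is again a direct summand of a bundle trivial on every line, so descending induction on the rank would finish. The problem is that you never prove global generation. You get as far as observing that $E$ is trivial on each sub-$\mathbb{P}^{d}=\{W\subseteq V_{d+1}\}$ and each sub-$\mathbb{P}^{n-d}=\{W\supseteq V_{d-1}\}$, and that $H^0(G,E)\cong H^0(Y,E')$, and then you explicitly declare the extension of a frame from such a subspace to all of $G$ to be ``the main obstacle,'' to be handled by unspecified $H^1$-vanishing with ideal sheaves. That is precisely the content of the proposition, so as written the proposal has a genuine gap rather than a proof; moreover the tools you gesture at (Lemma \ref{ta}, Lemma \ref{pic}, Lemma \ref{zq}) are not what is needed here and do not obviously control $H^1$ of $E$ twisted by ideal sheaves of Schubert subvarieties.

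The paper closes exactly this gap by avoiding global generation altogether and instead running an induction on $d$ through the incidence variety $F(d-1,d)$, with projections $q_1$ to $G=G(d,n)$ and $q_2$ to $G(d-1,n)$. The $q_2$-fibres map to the linear subspaces $\{W\supseteq V_{d-1}\}\cong\mathbb{P}^{n-d}$ that you already identified, on which $E$ is trivial by the projective-space case; hence $E':=q_{2*}q_1^{*}E$ is a rank $r$ bundle on $G(d-1,n)$ with $q_1^{*}E\cong q_2^{*}E'$. Since every line of $G(d-1,n)$ lies in some $q_2(q_1^{-1}(y))\cong\mathbb{P}^{d-1}$ and $q_1^{*}E$ is trivial on $q_1$-fibres, $E'$ is again trivial on every line, so the induction hypothesis (base case $d=1$, i.e.\ $\mathbb{P}^{n-1}$) gives $E'$ trivial, whence $q_1^{*}E$ and finally $E=q_{1*}q_1^{*}E$ are trivial. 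If you want to salvage your own outline, the fix is to replace the ``extend local frames'' step by this descent along $F(d-1,d)$: triviality is transported from $\mathbb{P}^{n-1}$ up to $G(d,n)$ one step at a time, and no cohomology vanishing on $G$ beyond the classical $\mathbb{P}^n$ statement is ever required.
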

\begin{proof}
We prove the theorem by induction on $d$. For $d=1$, the Grassmannian is just $\mathbb{P}_k^{n-1}$, the result holds (see \cite{ref14} Theorem 3.2.1). Let's consider the diagram
\begin{align}
	\xymatrix{
	F(d-1,d)\ar[d]^{q_1}   \ar[r]^-{q_2} & G_k(d-1,n).\\
	G
}
\end{align}
It's not hard to see that every $q_2$-fibre $q_2^{-1}(x)$ is isomorphic to $\mathbb{P}_k^{n-d}$ and $q_1(q_2^{-1}(x))\cong \mathbb{P}_k^{n-d}$ is a $d$-dimensional linear subspace containing the $(d-1)$-dimensional linear subspace corresponding to $x$. By assumption, the restriction of $E$ to every line in $q_1(q_2^{-1}(x))$ is trivial, thus $E|q_1(q_2^{-1}(x))$ is trivial. Next, let's consider the coherent sheaf\[
E'={q_2}_{*}q_1^{*}E.
\]
Note that $E'$ is an algebraic vector bundle of rank $r$ over $G_k(d-1,n)$ and $q_1^{*}E\cong q_2^{*}E'$, because $q_1^{*}E|q_2^{-1}(x)\cong E|q_1(q_2^{-1}(x))$ is trivial on all $q_2$-fibres.

\textbf{Claim.} $E'|L=\mathcal{O}_{L}^{\oplus r}$ for every line $L$ in $G_k(d-1,n)$.

In fact, because $q_1^{*}E|q_1^{-1}(y)$ is trivial for every $y\in G$,\[
E'|q_2(q_1^{-1}(y))\cong q_2^{*}E'|q_1^{-1}(y)\cong q_1^{*}E|q_1^{-1}(y)
\]
is trivial for every $y\in G$. Since every line $L\subseteq G_k(d-1,n)$ is contained in some set $q_2(q_1^{-1}(y))$, the restriction $E'|L=\mathcal{O}_{L}^{\oplus r}$ for every line $L$ in $G_k(d-1,n)$.

By the induction hypothesis, $E'$ is trivial. Thus $q_1^{*}E\cong q_2^{*}E'$ is trivial, so is $E\cong {q_1}_{*}q_1^{*}E$.	
	\end{proof}

\begin{cor}\label{trivial}
If $E$ be a globally generated vector bundle of rank $r$ over $G$ with $c_1(E)=0$, then $E$ is trivial.
\end{cor}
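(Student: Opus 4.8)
The plan is to check that the hypotheses of Proposition~\ref{e} are satisfied, i.e. to verify that $E|L\cong\mathcal{O}_L^{\oplus r}$ for every line $L\subseteq G$, and then to invoke that proposition directly.

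First I would fix an arbitrary line $L\subseteq G$. Since $L\cong\mathbb{P}_k^1$, Grothendieck's splitting theorem gives $E|L\cong\bigoplus_{i=1}^{r}\mathcal{O}_L(a_i)$ for some integers $a_1,\ldots,a_r$. The two numerical constraints coming from the hypotheses then pin these integers down completely. On the one hand, global generation of $E$ is inherited by the restriction $E|L$ (the global sections of $E$ surject onto the fibres along $L$), and a line bundle $\mathcal{O}_L(a)$ admits a globally generating system of sections only when $a\geq 0$; hence $a_i\geq 0$ for all $i$. On the other hand, the condition $c_1(E)=0$ restricts functorially to the line: writing $\iota_L\colon L\hookrightarrow G$ for the inclusion, one has $c_1(E|L)=\iota_L^{\ast}c_1(E)=0$, so that $\deg(E|L)=\sum_{i=1}^{r}a_i=0$.

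Combining these, the equality $\sum_{i=1}^r a_i=0$ together with $a_i\geq 0$ for every $i$ forces $a_i=0$ for all $i$, whence $E|L\cong\mathcal{O}_L^{\oplus r}$. As $L$ was arbitrary, the hypothesis of Proposition~\ref{e} holds, and I conclude that $E$ is trivial.

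I do not expect a genuine obstacle here: the entire substance of the statement is already carried by Proposition~\ref{e}, and the corollary merely records that a globally generated bundle with vanishing first Chern class meets its hypothesis. The only point requiring a moment's care is the degree computation, namely that $c_1(E)=0$ in $\mathrm{Pic}(G)\cong\mathbb{Z}$ indeed restricts to degree $0$ on each line; this is immediate from the functoriality of the first Chern class under the closed embedding $\iota_L$.
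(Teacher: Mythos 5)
Your proof is correct and follows essentially the same route as the paper: global generation restricts to each line to force $a_i\ge 0$, the vanishing of $c_1(E)$ forces $\sum a_i=0$, hence $E|L\cong\mathcal{O}_L^{\oplus r}$ for every line, and Proposition~\ref{e} concludes. The only cosmetic difference is that the paper exhibits the surjection $\mathcal{O}_G^{\oplus N}\twoheadrightarrow E$ explicitly before restricting, whereas you invoke the inheritance of global generation directly; the content is identical.
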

\begin{proof}
Since $E$ is globally generated, we have an exact sequence
\[0\rightarrow K \rightarrow \mathcal{O}_G^{\oplus N}\rightarrow E\rightarrow 0.\]
Restricting this sequence to a line $L\subseteq G$ we get\[
0\rightarrow K|L \rightarrow \mathcal{O}_L^{\oplus N}\rightarrow E|L\rightarrow 0.
\]
Suppose $E|L=\oplus_{i=1}^r \mathcal{O}_L(a_i)$, then together with $E|L$ is globally generated, we have $a_i\ge 0, 1\le i\le r$. If $c_1(E)=0$, then we must have $a_i=0$ for all $i$. Thus $E$ is trivial on every line and hence trivial.
\end{proof}

\begin{thm}\label{b}
For $r<d$ every uniform $r$-bundle over $G$ splits as a sum of line bundles.
	\end{thm}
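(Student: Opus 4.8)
The plan is to argue by induction on the rank $r$, combining a reduction to the trivial case via Proposition \ref{e} with a descent of the maximal-destabilizing subbundle from the incidence correspondence down to $G$. After twisting by a suitable power of the Pl\"ucker bundle I may assume the constant splitting type of $E$ on lines is $(a_1,\ldots,a_r)$ with $a_1\ge\cdots\ge a_r$. If $a_1=a_r$ then $E(-a_1)|L\cong\mathcal{O}_L^{\oplus r}$ for every line $L$, so $E(-a_1)$ is trivial by Proposition \ref{e} and we are done; hence I may assume $a_1>a_r$. Write $c_1$ for the top value and $m_1$ for its multiplicity, so that on each line $\mathcal{O}_L(c_1)^{\oplus m_1}$ is the maximal subbundle of the splitting. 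The first genuine input of the rank hypothesis is here: restricting $E$ to any linear $\mathbb{P}_k^{d}\subseteq G$ (the $d$-subspaces of a fixed $V_{d+1}$) gives a uniform bundle of rank $r<d=\dim\mathbb{P}_k^{d}$, which splits by Sato's theorem \cite{ref3}, and likewise on the $\mathbb{P}_k^{n-d}$'s since $r<d\le n-d$; this already excludes the universal-bundle summands that only appear when $r=d$.

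Next I would realize the canonical subbundle on the incidence variety. Using the standard diagram with $q_1\colon\bar F=F(d-1,d,d+1)\to G$ and $q_2\colon\bar F\to F(d-1,d+1)$, whose $q_2$-fibres $\widetilde L=q_2^{-1}(L)$ are exactly the lines $L\subseteq G$, the pullback $q_1^{*}E$ has constant splitting type along the $\mathbb{P}^1$-fibration $q_2$. The relative Harder--Narasimhan filtration of $q_1^{*}E$ over $q_2$ then produces a subsheaf $F_1\hookrightarrow q_1^{*}E$ (a subbundle over the open locus where the splitting is generic, extended to a normal subsheaf in the sense of Barth), whose restriction to each fibre $\widetilde L$ is the maximal piece $\mathcal{O}_{\widetilde L}(c_1)^{\oplus m_1}$.

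The main step, and the part I expect to be hardest, is to descend $F_1$ to a subbundle of $E$ on $G$, i.e.\ to show $F_1=q_1^{*}S$ for a rank-$m_1$ subbundle $S\subseteq E$. Equivalently, $F_1$ must restrict trivially to each fibre $Y=q_1^{-1}([W])\cong\mathbb{P}_k^{d-1}\times\mathbb{P}_k^{n-d-1}$, on which $q_1^{*}E|_Y$ is already trivial. I would first compute $c_1(F_1)$ in the Chow ring $A(\bar F)$ using Lemmas \ref{zq} and \ref{pic}: the relations $c_1(F_1)\cdot\widetilde L=m_1c_1$ together with the fact that $c_1(q_1^{*}E)$ is pulled back from $G$ pin down the Picard class of $F_1$ and force its relative degree along $q_1$ to vanish. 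Hence $F_1|_Y$ is trivial on every line of $Y$, so by the triviality criterion of Proposition \ref{e} (applied with $d=1$ to each projective-space factor) $F_1|_Y$ is a constant trivial subbundle of $\mathcal{O}_Y^{\oplus r}$. Pushing forward, $S:=(q_1)_{*}F_1$ is then a genuine subbundle of $E$, and Proposition \ref{e} applied to $S(-c_1)$ gives $S\cong\mathcal{O}_G(c_1)^{\oplus m_1}$. This descent is exactly where the Grassmannian geometry is more delicate than the projective-space case of \cite{ref5}, and the whole argument is characteristic-free, so Katz's Lemma \ref{ka} is not needed for $r<d$.

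Finally I would close the induction. The quotient $E/S$ is again uniform, of rank $r-m_1<d$ with splitting type $(a_{m_1+1},\ldots,a_r)$, hence a direct sum of line bundles by the inductive hypothesis. The extension $0\to S\to E\to E/S\to 0$ splits because every summand of $(E/S)^{\vee}\otimes S$ has the form $\mathcal{O}_G(c_1-b)$ with $c_1-b\ge 1>0$, and $H^1(G,\mathcal{O}_G(m))=0$ for $m>0$ by Kempf vanishing on the Grassmannian (valid in all characteristics); thus $\mathrm{Ext}^1(E/S,E/S)$-type obstructions vanish, $\mathrm{Ext}^1(E/S,S)=0$, and $E\cong S\oplus(E/S)$ is a direct sum of line bundles. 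The anticipated obstacle remains the descent step: verifying that $F_1$ is truly constant on the $q_1$-fibres rather than merely of relative degree zero, which is precisely the point at which the bound $r<d$ is indispensable.
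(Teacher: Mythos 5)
Your overall architecture matches the paper's: induction on the rank, the relative Harder--Narasimhan subbundle of $q_1^{*}E$ along $q_2$, descent to $G$, and Kempf vanishing to split the resulting extension. The gap is exactly where you anticipate it, and it is not minor: your justification that $F_1$ restricts trivially to the $q_1$-fibres $Y\cong\mathbb{P}_k^{d-1}\times\mathbb{P}_k^{n-d-1}$ does not work. Writing $\det F_1$ in terms of the generators $q_1^{*}\mathcal{O}_G(1)$, $\mathcal{H}_{H_d^{\vee}}$, $\mathcal{H}_{Q_{n-d}}$ of the Picard group of $\bar F$ (Lemma \ref{pic}), the two facts you invoke --- the degree $m_1c_1$ of $\det F_1$ on $\widetilde L$, and that $c_1(q_1^{*}E)=c_1(F_1)+c_1(q_1^{*}E/F_1)$ is pulled back from $G$ --- amount to one linear equation in three coefficients plus a constraint on a sum of two unknown classes; they do not determine the bidegree of $\det F_1$ on the two rulings of $Y$, so they cannot force the relative degree along $q_1$ to vanish, and nowhere in this computation does the hypothesis $r<d$ enter. (Had the relative degrees been shown to vanish, the rest of your deduction would be fine, since $F_1|_Y$ is a subsheaf of a trivial bundle and degree zero on a line then forces triviality on that line.)

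The paper's Lemma \ref{KQ} fills this hole by a different mechanism: the rank-$t$ subbundle of the trivial bundle $q_1^{\ast}(E^{\vee})|_{q_1^{-1}(x)}$ defines a classifying morphism $\varphi:\mathrm{VMRT}_x\rightarrow\mathbb{G}_k(t-1,\mathbb{P}_k(E^{\vee}_x))$, and its restriction $\psi$ to any $\mathbb{P}_k^{d-1}$ satisfies $c(\psi^{\ast}H_t)\cdot c(\psi^{\ast}Q_{r-t})=1$ in $A(\mathbb{P}_k^{d-1})=\mathbb{Z}[\mathcal{H}]/(\mathcal{H}^{d})$; because the total rank is $r<d$, no truncation occurs, so both factors equal $1$, hence $\deg\psi^{\ast}\mathcal{O}(1)=0$ and $\psi$ (and then $\varphi$) is constant. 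This is the one place where $r<d$ is genuinely used, and it is absent from your argument. Ironically, you already hold a workable substitute: Sato's theorem gives splitting of $E$ on every $\mathbb{P}_k^{d}\subseteq G$, and, as in the paper's Corollary \ref{d}, that splitting forces $\varphi$ to be constant on each ruling of $\mathrm{VMRT}_x$ and hence constant; but you only use Sato's theorem to ``exclude universal-bundle summands'' and never connect it to the descent. As written, the central step of your proof is asserted rather than proved.
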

	\begin{proof}
We prove this theorem by induction on $r$. For $r=1$, there is nothing to prove. Suppose the assertion is true for all uniform $r'$-bundles with $1\leq r'<r<d$. If $E$ is a uniform $r$-bundle, after twisting with an appropriate line bundle and dualizing if necessary, we can assume that $E$ has the splitting type
\[\underline{a}_E=(a_1,\ldots,a_r),
\]	
where $a_1\leq\cdots\leq a_r$ and $a_1=\cdots=a_t=0, ~a_{t+1}>0$. If $t=r$, then $E$ is trivial by Proposition \ref{e}. Therefore let $t<r$, i.e.,
\[\underline{a}_E=(0,\ldots,0,a_{t+1},\ldots,a_r),~a_{t+i}>0, ~\text{for}~i=1,\ldots,r-t.
\]

Let's consider the standard diagram
\begin{align}\label{key}
\xymatrix{
	\bar{F}\ar[d]^{q_1}   \ar[rr]^-{q_2} && F(d-1,d+1).\\
	G
}
\end{align}
For $L\in F(d-1,d+1)$, the $q_2$-fibre
\[\widetilde{L}={q_2}^{-1}(L)=\{(x,L)|x\in L\},\]
is mapped isomorphically under $q_1$ to the line $L$ in $G$ and we have \[{q_1}^\ast E|\widetilde{L}\cong E|L.\]
For $x\in G$, the $q_1$-fibre over $x$,
\[{q_1}^{-1}(x)=\{(x,L)|x\in L\},
\]
is mapped isomorphically under $q_2$ to the subvariety
\[\text{VMRT}_x=\{L\in F(d-1,d+1)|x\in L\}\cong \mathbb{P}_k^{d-1}\times\mathbb{P}_k^{n-d-1}\]
(VMRT$_x$ means the variety of minimal rational tangents at $x$. We refer to \cite{ref22} for a complete account on the VMRT).
Because \[E|L\cong\mathcal{O}_L^{\oplus t}\oplus\bigoplus\limits_{i=1}^{r-t} \mathcal{O}_L(a_{t+i}),~ a_{t+i}>0,\]
\[h^0\left({q_2}^{-1}(L),{q_1}^\ast (E^{\vee})|_{q_2^{-1}(L)}\right)=t\] for all $L\in F(d-1,d+1)$.
Thus the direct image ${q_2}_{\ast}{q_1}^{\ast}(E^{\vee})$ is a vector bundle of rank $t$ over $F(d-1,d+1)$.
The canonical homomorphism of sheaves\[
{q_2}^{\ast}{q_2}_{\ast}P^{\ast}(E^{\vee})\rightarrow {q_1}^{\ast}(E^{\vee})
\]
makes $\widetilde{N}^{\vee}:={q_2}^{\ast}{q_2}_{\ast}{q_1}^{\ast}(E^{\vee})$ to be a subbundle of ${q_1}^{\ast}(E^{\vee})$. Because over each ${q_2}$-fibre $\widetilde{L}$, the evaluation map \[
\widetilde{N}^{\vee}|\widetilde{L}=H^0(\widetilde{L},{q_1}^{\ast}(E^{\vee})|\widetilde{L})\otimes_{k}\mathcal{O}_{\widetilde{L}}\rightarrow {q_1}^{\ast}(E^{\vee})|\widetilde{L}
\]
identifies $\widetilde{N}^{\vee}|\widetilde{L}$ with $\mathcal{O}_L^{\oplus t}\subseteq\mathcal{O}_L^{\oplus t}\oplus\bigoplus_{i=1}^{r-t} \mathcal{O}_L(-a_{t+i})=E^{\vee}|L.$
Over $\bar{F}$ we thus obtain an exact sequence\[
0\rightarrow\widetilde{M}\rightarrow {q_1}^{\ast}E\rightarrow\widetilde{N}\rightarrow0
\]
of vector bundles, whose restriction to ${q_2}$-fibres $\widetilde{L}$ looks as follows:
$$
\xymatrix{
0\ar[r] &\widetilde{M}|\widetilde{L} \ar[r] \ar[d]_\cong&	{q_1}^{\ast}E|\widetilde{L} \ar[r] \ar[d]_\cong&\widetilde{N}|\widetilde{L}\ar[r]\ar[d]_\cong&0\\
	0\ar[r] &\bigoplus_{i=1}^{r-t} \mathcal{O}_L(a_{t+i})\ar[r]  &\mathcal{O}_L^{\oplus t}\oplus\bigoplus_{i=1}^{r-t} \mathcal{O}_L(a_{t+i}) \ar[r] &\mathcal{O}_L^{\oplus t}\ar[r]&0.\\
	 }
$$
By Lemma \ref{KQ} below, there are bundles $M={q_1}_{\ast}\widetilde{M}$, $N={q_1}_{\ast}\widetilde{N}$ over $G$ with \[\widetilde{M}={q_1}^{\ast}M,\widetilde{N}={q_1}^{\ast}N.
\]
$M$ and $N$ are then necessarily uniform and we obtain by projecting the bundle sequence\[
0\rightarrow {q_1}^{\ast}M\rightarrow {q_1}^{\ast}E\rightarrow {q_1}^{\ast}N\rightarrow0
\]
onto $G$ to get the exact sequence
\begin{align}\label{c}
0\rightarrow M\rightarrow E\rightarrow N\rightarrow 0.
\end{align}
Because $M$ and $N$ are uniform vector bundles of rank smaller than $r$, by the induction hypothesis,
\[
M=\bigoplus\limits_{i=1}^{r-t} \mathcal{O}_{G}(a_{t+i}),N=\mathcal{O}_{G}^{\oplus t}.
\]
It follows from the Kempf vanishing theorem that $H^1(G,N^{\vee}\otimes M)=0$. Thus the exact sequence (\ref{c}) splits and hence also $E$.
	\end{proof}
\begin{lemma}\label{KQ}
	There are bundles $M$, $N$ over $G$ with $\widetilde{M}={q_1}^{\ast}M,\widetilde{N}={q_1}^{\ast}N$.
\end{lemma}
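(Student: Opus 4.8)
The plan is to read Lemma~\ref{KQ} as a \emph{descent} statement along $q_1\colon\bar F\to G$. Since $q_1$ is smooth and projective with connected fibres $P:=q_1^{-1}(x)\cong\mathbb{P}_k^{d-1}\times\mathbb{P}_k^{n-d-1}$, one has $q_{1\ast}\mathcal O_{\bar F}=\mathcal O_G$, and the standard cohomology-and-base-change criterion applies: if a vector bundle $\mathcal F$ on $\bar F$ restricts to a trivial bundle on every fibre $P$, then $q_{1\ast}\mathcal F$ is locally free of rank $\operatorname{rk}\mathcal F$ and the canonical map $q_1^{\ast}q_{1\ast}\mathcal F\to\mathcal F$ is an isomorphism. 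Thus it suffices to prove that $\widetilde M$ and $\widetilde N$ are trivial on each fibre $P$; then $M:=q_{1\ast}\widetilde M$ and $N:=q_{1\ast}\widetilde N$ do the job. I would first reduce the two tasks to one: restricting the defining sequence $0\to\widetilde M\to q_1^{\ast}E\to\widetilde N\to0$ to $P$ and using that $q_1^{\ast}E|_P\cong\mathcal O_P^{\oplus r}$ (being pulled back from the point $x$), once $\widetilde N|_P\cong\mathcal O_P^{\oplus t}$ is known the surjection $\mathcal O_P^{\oplus r}\to\mathcal O_P^{\oplus t}$ is given by a matrix of global functions, i.e. a constant matrix since $H^0(P,\mathcal O_P)=k$; hence its kernel $\widetilde M|_P$ is a trivial bundle of rank $r-t$. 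So everything comes down to showing $\widetilde N|_P$ is trivial.

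Next I would establish triviality of $\widetilde N|_P$ in two steps. First, $\widetilde N|_P$ is a quotient of the trivial bundle $\mathcal O_P^{\oplus r}$, hence globally generated; the classifying morphism $\varphi\colon P\to\mathrm{Gr}\bigl(t,H^0(\widetilde N|_P)\bigr)$ realizes $\widetilde N|_P$ as the pull-back of the universal quotient, so that $c_1(\widetilde N|_P)=\varphi^{\ast}(\text{ample generator})$ vanishes exactly when $\varphi$ is constant, that is, exactly when $\widetilde N|_P$ is trivial (this is the argument already used in Corollary~\ref{trivial}). Since $\operatorname{Pic}(P)=\mathbb Z^{2}$ is detected by the two rulings, it remains to show that $\widetilde N$ has degree $0$ on a line $\ell_1$ of the first ruling (fixed $V_{d+1}$, with $V_{d-1}$ moving in a pencil) and on a line $\ell_2$ of the second ruling (fixed $V_{d-1}$, with $V_{d+1}$ moving). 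Here I would record the two a priori bounds: global generation of $\widetilde N|_P$ forces $\deg_{\ell_i}\widetilde N\ge0$, while $\widetilde M|_P\hookrightarrow\mathcal O_P^{\oplus r}$ forces $\deg_{\ell_i}\widetilde M\le0$; but since $\deg_{\ell_i}\widetilde M+\deg_{\ell_i}\widetilde N=0$, both statements give only the single inequality $\deg_{\ell_i}\widetilde N\ge0$.

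The hard part is therefore the reverse inequality $\deg_{\ell_i}\widetilde N\le0$, and this is precisely where the uniform hypothesis must be used essentially, the formal sub/quotient structure being insufficient. My plan is to feed in the geometry of the rulings: the lines $L\subseteq G$ parametrised by $q_2(\ell_1)$ all lie in the linearly embedded $\beta$-plane $\Pi_\beta=\{W'\subseteq V_{d+1}\}\cong\mathbb{P}_k^{d}\subseteq G$ through $x$, and those parametrised by $q_2(\ell_2)$ lie in the $\alpha$-plane $\Pi_\alpha=\{W'\supseteq V_{d-1}\}\cong\mathbb{P}_k^{n-d}\subseteq G$; on each plane $\ell_i$ becomes a line in the fibre over $x$ of the corresponding point–line incidence variety. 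Restricting $E$ to $\Pi_\beta$ (resp. $\Pi_\alpha$) yields a uniform bundle of rank $r$ on a projective space of dimension $d$ (resp. $n-d\ge d$); since $r<d$, the classification over projective spaces (\cite{ref3}, obtained by the very method of \cite{ref5} that we are adapting) shows this restriction splits as $\mathcal O^{\oplus t}\oplus\bigoplus_i\mathcal O(a_{t+i})$. Because the formation of $\widetilde N$ out of $q_{2\ast}q_1^{\ast}E^{\vee}$ is compatible with restriction to $\Pi_\beta$ and $\Pi_\alpha$, $\widetilde N$ restricts there to the pull-back of the trivial summand, which is trivial along the incidence fibres; hence $\deg_{\ell_1}\widetilde N=\deg_{\ell_2}\widetilde N=0$. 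Alternatively one may attempt to compute $c_1(\widetilde N)$ intrinsically in $\operatorname{Pic}(\bar F)$ through Lemma~\ref{pic} and Lemma~\ref{zq}, starting from the known splitting type on the $q_2$-fibres; I expect that verifying the compatibility of the direct images under restriction to $\Pi_\alpha$ and $\Pi_\beta$ (equivalently, supplying this missing inequality by the intrinsic degree computation) will be the main technical obstacle. Granting $\deg_{\ell_i}\widetilde N=0$, we obtain $c_1(\widetilde N|_P)=0$, so $\widetilde N|_P$ is trivial; then $\widetilde M|_P$ is trivial by the first paragraph, and the descent criterion produces the desired bundles $M$ and $N$.
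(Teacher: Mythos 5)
Your proposal is correct, but the crucial step --- triviality of $\widetilde N$ on the $q_1$-fibre $P\cong\mathbb{P}_k^{d-1}\times\mathbb{P}_k^{n-d-1}$ --- is handled by a genuinely different argument than the paper's. The paper looks at the classifying map $\varphi:\text{VMRT}_x\rightarrow\mathbb{G}_k(t-1,\mathbb{P}_k(E^{\vee}_x))$ and shows its restriction $\psi$ to any linearly embedded $\mathbb{P}_k^{d-1}$ is constant by a self-contained Chern-class computation: from $0\rightarrow\psi^{\ast}H_t\rightarrow\mathcal{O}^{\oplus r}\rightarrow\psi^{\ast}Q_{r-t}\rightarrow 0$ one gets $c(\psi^{\ast}H_t)\cdot c(\psi^{\ast}Q_{r-t})=1$ in $A(\mathbb{P}_k^{d-1})=\mathbb{Z}[\mathcal H]/\mathcal H^{d}$, and since every class involved lives in degree at most $r<d$ the truncation is invisible and both total Chern classes must equal $1$; hence $\deg\psi^{\ast}\mathcal O(1)=0$ and $\psi$ is constant, after which a connectedness argument (any two $\mathbb{P}_k^{d-1}$'s of the covering family meet a common third) makes $\varphi$ constant on all of $\text{VMRT}_x$. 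You instead compute $c_1(\widetilde N)$ on the two rulings by restricting $E$ to the $\beta$- and $\alpha$-planes $\mathbb{P}_k^{d},\ \mathbb{P}_k^{n-d}\subseteq G$ through $x$ and invoking Sato's theorem \cite{ref3} that uniform $r$-bundles split there (legitimate since $r<d\le n-d$ and the result holds in all characteristics, so there is no circularity). This works: the base-change compatibility of ${q_2}_{\ast}{q_1}^{\ast}(E^{\vee})$ with restriction to these planes, which you flag as the main obstacle, is automatic because $h^0(L,E^{\vee}|L)=t$ for every line, so it is not a real gap. What each approach buys: yours is shorter once the projective-space classification is taken as a black box (and is close in spirit to the shortcut the paper packages as Corollary \ref{d}), while the paper's Chow-ring argument is elementary and independent of any prior classification, which is why it localizes exactly where the hypothesis $r<d$ enters. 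Your treatment of $\widetilde M|_P$ (kernel of a constant matrix between trivial bundles) also differs from the paper's appeal to global generation plus $c_1=0$ via Corollary \ref{trivial}, but both are sound.
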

\begin{proof}
	To prove the lemma, it suffices to show that $\widetilde{M}$,$\widetilde{N}$ are trivial on all ${q_1}$-fibres (the canonical morphisms ${q_1}^{\ast}{q_1}_{\ast}\widetilde{M}\rightarrow\widetilde{M}$, ${q_1}^{\ast}{q_1}_{\ast}\widetilde{N}\rightarrow\widetilde{N}$ are then isomorphisms).
Because $\widetilde{N}^{\vee}$ is a subbundle of ${q_1}^{\ast}(E^{\vee})$ of rank $t$, for every point $x\in G$, it provides a morphism\[
\varphi:\text{VMRT}_x\rightarrow\mathbb{G}_k(t-1,\mathbb{P}_k(E^{\vee}_x)).
\]
We claim that $\psi:=\varphi|\mathbb{P}_k^{d-1}$ is constant for any $\mathbb{P}_k^{d-1}\subseteq \text{VMRT}_x$.

Let's consider $\psi^{\ast}H_t$ and $\psi^{\ast}Q_{r-t}$(the pull back of universal bundle $H_t$ and universal  quotient bundle $Q_{r-t}$ under $\psi$), which are vector bundles on $\mathbb{P}_k^{d-1}$. We have the exact sequence\[
0\rightarrow\psi^{\ast}H_t\rightarrow\mathcal{O}^{\oplus r}_{\mathbb{P}_k^{d-1}}\rightarrow\psi^{\ast}Q_{r-t}\rightarrow 0.
\]
Then \[
c(\psi^{\ast}H_t).c(\psi^{\ast}Q_{r-t})=1,
\]
i.e. $\big(1+c_1(\psi^{\ast}H_t)+\cdots+c_t(\psi^{\ast}H_t)\big).\big(1+c_1(\psi^{\ast}Q_{r-t})+\cdots+c_{r-t}(\psi^{\ast}Q_{r-t})\big)=1$.

Because $r<d$ and the Chow ring of $\mathbb{P}_k^{d-1}$ is $A(\mathbb{P}_k^{d-1})=\mathbb{Z}[\mathcal{H}]/{\mathcal{H}^d}$, where $\mathcal{H}$ is the rational equivalence class of a hyperplane, this must imply \[
c(\psi^{\ast}H_t)=1, ~  c(\psi^{\ast}Q_{r-t})=1.
\]
(Note that here we use the condition $r<d$, otherwise the proof doesn't work.)
In particular\[
c_1(\psi^{\ast}H_t)=0 , ~c_1(\psi^{\ast}Q_{r-t})=0.
\]
Let $\mathcal{O}_{\mathbb{G}_k(t-1,r-1)}(1)$ be the Pl$\ddot{u}$cker bundle of $\mathbb{G}_k(t-1,r-1)$, since deg $\psi^{\ast}\mathcal{O}_{\mathbb{G}_k(t-1,r-1)}(1)=c_1(\psi^{\ast}H_t)=0$, $\psi$ is constant.
Because $\text{VMRT}_{x}\cong \mathbb{P}_k^{d-1}\times\mathbb{P}_k^{n-d-1}$ can be covered by a family of $\mathbb{P}_k^{d-1}$ and for any two different $\mathbb{P}_k^{d-1}$'s in the family, there exists a $\mathbb{P}_k^{d-1}$ which intersects them both. So we obtain that $\varphi$ is constant. Thus $\widetilde{N}$ is trivial on all ${q_1}$-fibres. Moreover for every point $x\in G$, $\widetilde{M}^{\vee}|{q_1}^{-1}(x)$ is globally generated and $c_1(\widetilde{M}^{\vee}|{q_1}^{-1}(x))=0$, so $\widetilde{M}^{\vee}|{q_1}^{-1}(x)$ is trivial due to Corollay \ref{trivial}.
\end{proof}
\begin{cor}\label{d}
Let $E$ be a vector bundle on $G=G(d,n)$ $(d\ge 2)$ over an algebraically closed field. If $E$ splits as a direct sum of line bundles when it restricts to every $\mathbb{P}_k^{d}\subseteq G$, then $E$ splits as a direct sum of line bundles on $G$.
\end{cor}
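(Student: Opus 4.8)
The plan is to run the argument of Theorem \ref{b} and Lemma \ref{KQ} essentially verbatim; the point is that the splitting hypothesis on the linear $\mathbb{P}_k^d$'s supplies exactly the constancy that was previously extracted from the Chern class computation under the restriction $r<d$, and this is what lets us drop the bound on the rank. Since $G(d,n)\cong G(n-d,n)$ I may assume $d\le n-d$. First I would record which linear subspaces are relevant. Fixing a point $x=[V_d]\in G$, the lines through $x$ form $\text{VMRT}_x\cong\mathbb{P}_k^{d-1}\times\mathbb{P}_k^{n-d-1}$. Because any linear subspace of the Segre variety $\mathbb{P}_k^{d-1}\times\mathbb{P}_k^{n-d-1}$ lies in a single ruling, a linearly embedded $\mathbb{P}_k^{d-1}\subseteq\text{VMRT}_x$ is either the horizontal slice $\mathbb{P}_k^{d-1}\times\{pt\}$ or a $\mathbb{P}_k^{d-1}$ inside a vertical ruling $\{pt\}\times\mathbb{P}_k^{n-d-1}$ (the latter fits precisely because $d-1\le n-d-1$). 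In the first case the corresponding lines are the lines through $x$ lying on the plane $\{[V_d']:V_d'\subseteq V_{d+1}\}\cong\mathbb{P}_k^d$; in the second they are the lines through $x$ lying on a $\mathbb{P}_k^d$ contained in $\{[V_d']:V_{d-1}\subseteq V_d'\}\cong\mathbb{P}_k^{n-d}$. In either case the lines sit on a linear $\mathbb{P}_k^d\subseteq G$ on which, by hypothesis, $E$ splits.

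It follows that $E$ is uniform: the splitting type is constant on each such $\mathbb{P}_k^{d-1}$, hence (since $\text{VMRT}_x$ is connected by these $\mathbb{P}_k^{d-1}$'s, as in the covering argument of Lemma \ref{KQ}) constant on $\text{VMRT}_x$, and finally constant on all of $G$, because the various $\text{VMRT}_x$ overlap as $x$ runs over the connected $G$ (each line lies in $\text{VMRT}_x$ for every point $x$ on it). After twisting by a suitable power of $\mathcal{O}_G(1)$ I may assume the splitting type is $\underline{a}_E=(0,\dots,0,a_{t+1},\dots,a_r)$ with $a_1=\cdots=a_t=0$ and $a_{t+i}>0$; if $t=r$ then $E$ is trivial by Proposition \ref{e}, so assume $1\le t<r$. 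Now I would form the standard diagram (\ref{key}) and construct the bundles $\widetilde{M}\subseteq q_1^{\ast}E$ and $\widetilde{N}$ exactly as in the proof of Theorem \ref{b}, where $\widetilde{N}^{\vee}={q_2}^{\ast}{q_2}_{\ast}{q_1}^{\ast}(E^{\vee})\subseteq {q_1}^{\ast}(E^{\vee})$ is the rank $t$ subbundle cut out fibrewise over the lines by the global sections.

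The heart of the matter is to show that for each $x$ the classifying morphism $\varphi:\text{VMRT}_x\to\mathbb{G}_k(t-1,\mathbb{P}_k(E_x^{\vee}))$ attached to $\widetilde{N}^{\vee}$ is constant. On a linear $\mathbb{P}_k^{d-1}\subseteq\text{VMRT}_x$ coming from a plane $\mathbb{P}_k^d\ni x$ as above, $E|\mathbb{P}_k^d$ splits as $\mathcal{O}^{\oplus t}\oplus\bigoplus_i\mathcal{O}(a_{t+i})$, so the subsheaf of $E^{\vee}|\mathbb{P}_k^d$ generated by its global sections is the constant summand $\mathcal{O}^{\oplus t}$ (the twisted summands have no sections since $a_{t+i}>0$). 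Its fibre at $x$ is a fixed $t$-dimensional subspace of $E_x^{\vee}$, independent of the chosen line, and this fibre is exactly the value of $\varphi$ along the slice; hence $\varphi|\mathbb{P}_k^{d-1}$ is constant for every linear $\mathbb{P}_k^{d-1}\subseteq\text{VMRT}_x$. This is precisely the input used in Lemma \ref{KQ}, and the covering argument there then yields that $\varphi$ is constant, so that $\widetilde{N}$, and likewise $\widetilde{M}^{\vee}$ via Corollary \ref{trivial}, are trivial on the $q_1$-fibres. This step is where the splitting hypothesis replaces the condition $r<d$.

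Descending as in Lemma \ref{KQ} then produces bundles $M,N$ on $G$ with $\widetilde{M}={q_1}^{\ast}M$, $\widetilde{N}={q_1}^{\ast}N$ and an exact sequence $0\rightarrow M\rightarrow E\rightarrow N\rightarrow 0$. Here $N$ is uniform of trivial splitting type, so $N\cong\mathcal{O}_G^{\oplus t}$ by Proposition \ref{e}; restricting the sequence to any $\mathbb{P}_k^d$ and using $\mathrm{Hom}(\mathcal{O}(a_{t+i}),\mathcal{O})=0$ for $a_{t+i}>0$ shows that $M|\mathbb{P}_k^d\cong\bigoplus_i\mathcal{O}(a_{t+i})$, so $M$ again splits on every $\mathbb{P}_k^d$ and, having rank $r-t<r$, splits as a sum of line bundles on $G$ by induction on the rank (the base $r=1$ being a line bundle). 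Finally $\mathrm{Ext}^1(N,M)=H^1(G,M)^{\oplus t}=0$ by the Kempf vanishing theorem, since each summand of $M$ is $\mathcal{O}_G(a)$ with $a>0$; hence the sequence splits and $E\cong\mathcal{O}_G^{\oplus t}\oplus M$ is a direct sum of line bundles. The main obstacle, and the only genuinely new ingredient beyond Theorem \ref{b}, is the identification in the first paragraph of all linear $\mathbb{P}_k^{d-1}\subseteq\text{VMRT}_x$ with linear $\mathbb{P}_k^d\subseteq G$ through $x$, which (relying on $d\le n-d$) makes the splitting hypothesis strong enough to force $\varphi$ constant for arbitrary $r$.
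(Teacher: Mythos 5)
Your argument is essentially the paper's own proof: both establish uniformity from the two families of linear $\mathbb{P}_k^d$'s through a line, both replace the Chern-class computation of Lemma \ref{KQ} (which needed $r<d$) by the observation that the splitting of $E$ on a $\mathbb{P}_k^d$ forces $\varphi$ to be constant on the corresponding $\mathbb{P}_k^{d-1}\subseteq\mathrm{VMRT}_x$, and both then descend to $0\to M\to E\to N\to 0$, induct on the rank, and close with Kempf vanishing. The one step I would not lean on is your opening reduction ``WLOG $d\le n-d$ via $G(d,n)\cong G(n-d,n)$'': under that isomorphism the hypothesis ``splits on every linear $\mathbb{P}_k^d$'' does not become the hypothesis of the corollary for $G(n-d,n)$ (which concerns $\mathbb{P}_k^{n-d}$'s), so the reduction is not automatic --- though since the connecting argument in $\mathrm{VMRT}_x\cong\mathbb{P}_k^{d-1}\times\mathbb{P}_k^{n-d-1}$ genuinely needs $d-1\le n-d-1$, this is a caveat your write-up shares with the paper's standing convention rather than a new gap.
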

\begin{proof}
The condition implies that $E$ is uniform. In fact, any line $L$ in $G$ is given by a $(d-1)$-dimensional vector space $V_{d-1}$ and a $(d+1)$-dimensional vector space $V_{d+1}$. Since $G(d, V_{d+1})$ and $\{W\in G(d, 2d)|W\supseteq V_{d-1}\}$ are two different subsets of $G$ which are both isomorphic to $\mathbb{P}^d$ and contain $L$, we can see that $E$ has uniform splitting type.

We use the same notations in Theorem \ref{b} to prove the corollary by induction on $r$ (the rank of $E$). If we have the exact sequence of vector bundles on $G$
\begin{align}\label{a}
0\rightarrow M\rightarrow E\rightarrow N\rightarrow 0,
\end{align}	
where the rank of $M$ and $N$ are smaller than $r$,
such that \[
M|Z=\bigoplus\limits_{i=1}^{r-t} \mathcal{O}_Z(a_{t+i}),N|Z=\mathcal{O}_Z^{\oplus t},
\]
 for every projective subspace $Z$ of dimension $d$, then by the induction hypothesis, $M$ and $N$ split. It follows from the Kempf vanishing theorem that $H^1(G,N^{\vee}\otimes M)=0$. Thus the above exact sequence splits and hence also $E$.

Similar to the proof of Theorem \ref{b}, we can obtain an exact sequence in $\bar{F}$\[
0\rightarrow\widetilde{M}\rightarrow {q_1}^{\ast}E\rightarrow\widetilde{N}\rightarrow0.	\]
If we prove that the morphism $\varphi$ is constant for every $x\in G$, then there exist two bundles $M$, $N$ over $G$ with $\widetilde{M}={q_1}^{\ast}M,\widetilde{N}={q_1}^{\ast}N$. By projecting the bundle sequence\[
0\rightarrow {q_1}^{\ast}M\rightarrow {q_1}^{\ast}E\rightarrow {q_1}^{\ast}N\rightarrow0
\]
onto $G$, we can get the desired exact sequence \ref{a}.
Thus, to prove the existence of the above exact sequence, it suffices to show that the map\[
 \varphi:\text{VMRT}_x\rightarrow\mathbb{G}_k(t-1,\mathbb{P}_k(E^{\vee}_x))
 \]
 is constant for every $x\in G$ .
 Given a projective subspace $Z$ of dimension $d$ and a line $L\subseteq Z$, we take any point $x\in L$ and denote by $Z'$ the subspace of $\text{VMRT}_x$ corresponding to the tangent directions to $Z$ at $x$. By the hypothesis, $E|Z$ is the direct sum of line bundles, so
 \[
  \varphi|Z':Z'\rightarrow\mathbb{G}_k(t-1,\mathbb{P}_k(E^{\vee}_x))
 \]
 is constant. Since $G$ covered by linear projective subspaces of dimension $d$ and $\text{VMRT}_{x}\cong \mathbb{P}_k^{d-1}\times\mathbb{P}_k^{n-d-1}$, $\varphi$ is constant for every $x\in G$ by the same reason given in the proof of Lemma \ref{KQ}.
\end{proof}
\section{Uniform vector bundles of rank d on G }
In this section, we suppose the characteristic of the field $k$ is positive integer $p$. Although Guyot's paper \cite{ref15} is based on the field of characteristic zero, many conclusions are also true for positive characteristic, except for the argument of the splitting type of uniform non-splitting $d$-bundle over $G$ being $(0,\ldots,0,-1)$. The main reason is that the first part of Guyot's paper, which is independent of the characteristic of the field $k$, is for studying the Chow ring and axiomatically computing Chern classes of vector bundles on flag varieties.
		
The uniform vector bundle $E$ can be characterized as follows\cite{ref15}. Let (\ref{key}) be the stardard diagram and $L$ be a line in $G$, then $E$ is uniform with
$$E|L=\mathcal{O}_L(u_1)^{\oplus r_1}\oplus\cdots\oplus\mathcal{O}_L(u_t)^{\oplus r_t}, ~u_1>\cdots>u_t,$$
if and only if there is a filtration
$$0=H N_{q}^{0}\left({q_1}^{*} E\right)\subseteq H N_{q}^{1}\left({q_1}^{*} E\right)\subseteq\cdots\subseteq H N_{q}^{t}\left({q_1}^{*} E\right)={q_1}^*E$$
of ${q_1}^{\ast}E$ by subbundles $ H N_{q}^{i}\left({q_1}^{*} E\right)$ such that $H N_{q}^{i}\left({q_1}^{*} E\right)/H N_{q}^{i-1}\left({q_1}^{*} E\right)\cong {q_2}^*(E_i)\otimes O_{q}(u_i)$, where
$E_i$ is an algebraic vector bundle of rank $r_i$ over $F(d-1, d+1)$, $O_{q}(1)= \mathcal{H}_{H_{d}^{\vee}}$ and\[
H N_{q}^{i}\left({q_1}^{*} E\right)=\operatorname{Im}\left[ {q_2}^{*}  {q_2}_{*}\left({q_1}^{*} E \otimes O_{q}\left(-u_{i}\right)\right) \otimes O_{q}\left(u_{i}\right) \rightarrow {q_1}^{*} E\right].
\]

This filtration is the relative Harder-Narasimhan filtration \nocite{ref12,ref13,ref14}  of ${q_1}^*E$.
Since rank $E=d\le n-d$, by Whitney's formula and Lemma \ref{zq}, we have
\begin{equation}\label{chow}
c_{{q_1}^{*} E}(T)=\prod_{i=1}^{t}c_{H N_{q}^{i}\left({q_1}^{*} E\right)/H N_{q}^{i-1}\left({q_1}^{*} E\right)}(T)+a\sum_{n-d}(X_1,\ldots,X_{d+1}).
\end{equation}
\begin{thm}\label{z}
If $E$ is a uniform vector bundle over $G$ of rank $d$, then $E\cong\oplus_{i=1}^{d}\mathcal{O}_G(a_i)$, $E\cong {F^m}^{\ast}H_d\otimes\mathcal{O}_G(v_1)$ or $E\cong {F^m}^{\ast}(H_d^{\vee})\otimes\mathcal{O}_G(v_2)$, $m\geq0$ and $a_i, v_1, v_2 \in \mathbb{Z}$.
\end{thm}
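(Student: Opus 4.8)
The plan is to exploit the characterization of uniform bundles via the relative Harder--Narasimhan filtration of ${q_1}^{\ast}E$ together with the Chow-ring relation \eqref{chow}, and then to split the argument according to the ``gap pattern'' of the splitting type $\underline{a}_E=(u_1,\ldots,u_1,\ldots,u_t,\ldots,u_t)$ on a line. After twisting by a suitable power of $\mathcal{O}_G(1)$ and dualizing if necessary, I would normalize so that the filtration begins with a trivial (degree-zero) sub-quotient and every jump $u_i-u_{i+1}$ is positive. The first objective is to show that under the Grauert--M\"ulich--Barth-type numerical restriction coming from semistability of the Grassmannian, the only splitting types that can actually occur for a rank-$d$ uniform bundle are the fully split type $(a,\ldots,a)$ and a single-jump type of the shape $(0,\ldots,0,-1)$ (up to twist and dual). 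I expect Guyot's earlier results (Lemma~\ref{zq} and Lemma~\ref{pic}), which are characteristic-free, to force all intermediate multiplicities into line with the Chern-class identity, leaving only these candidate types.

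First I would dispose of the fully uniform case: if the filtration has a single step, then $E|L\cong\mathcal{O}_L(a)^{\oplus d}$ on every line, so $E(-a)$ restricts trivially to every line. By Proposition~\ref{e} applied after the twist, $E(-a)$ is then trivial and $E\cong\mathcal{O}_G(a)^{\oplus d}$, which lands in the first family. The substantive case is the single-jump type, where the relative HN filtration gives a short exact sequence
\begin{equation*}
0\rightarrow \widetilde{M}\rightarrow {q_1}^{\ast}E\rightarrow \widetilde{N}\rightarrow 0
\end{equation*}
on $\bar{F}$, with the sub- and quotient-bundle restricting on each $q_2$-fibre $\widetilde{L}$ to the pieces of $\mathcal{O}_L^{\oplus(d-1)}\oplus\mathcal{O}_L(-1)$. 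As in Theorem~\ref{b}, this produces a classifying morphism
\begin{equation*}
\varphi:\mathrm{VMRT}_x\cong\mathbb{P}_k^{d-1}\times\mathbb{P}_k^{n-d-1}\rightarrow\mathbb{G}_k(t-1,\mathbb{P}_k(E^{\vee}_x))
\end{equation*}
for each $x\in G$. Here, crucially, the argument of Lemma~\ref{KQ} \emph{breaks down}, because we are in rank $r=d$ and the Chow ring $\mathbb{Z}[\mathcal{H}]/\mathcal{H}^{d}$ no longer forces the Chern classes of $\psi^{\ast}H_t$ to vanish. So $\varphi$ need not be constant, and the bundle need not descend to $G$ directly.

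This is where the characteristic-$p$ input enters and constitutes the main obstacle. Instead of proving $\varphi$ constant, I would compute the induced map on differentials $d\varphi$ and show that it is identically zero, using the degree computation via $\psi^{\ast}\mathcal{O}_{\mathbb{G}_k}(1)$ together with the Chow-class relation \eqref{chow}: in characteristic zero the only surviving class is $b=-1$, while in characteristic $p$ the Chern data can be divisible by $p$, which is precisely the signal that $\varphi$ factors through Frobenius. By Katz's Lemma~\ref{ka}, $d\varphi=0$ forces $\varphi$ to factor through the relative Frobenius morphism $F_{\mathrm{VMRT}_x/\,\cdot}$, and by Remark~\ref{F} the pulled-back universal subbundle on the Frobenius twist $\mathbb{P}(F^{m\ast}E)$ identifies the descended bundle as ${F^m}^{\ast}H_d$ up to a Plücker twist. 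Iterating until the map becomes genuinely constant determines the integer $m\ge 0$, and reassembling the descended sub/quotient on $G$ via ${q_1}_{\ast}$ (as in Lemma~\ref{KQ}) yields $E\cong {F^m}^{\ast}H_d\otimes\mathcal{O}_G(v_1)$, with the dual case $E\cong {F^m}^{\ast}(H_d^{\vee})\otimes\mathcal{O}_G(v_2)$ arising symmetrically from the dualized normalization. The hardest step is pinning down the divisibility-by-$p$ bookkeeping in \eqref{chow} that certifies $d\varphi=0$ at each Frobenius level; everything else is a translation of the rank-$<d$ machinery of Theorem~\ref{b} into this setting.
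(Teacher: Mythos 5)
Your reduction to candidate splitting types is where the argument first goes wrong. A Grauert--M\"ulich--Barth--type restriction is a characteristic-zero statement and cannot be invoked here; the correct reduction (Guyot's Corollary 4.1.1\,b applied to the relation \eqref{chow} with $a=0$) only forces the multiplicities $r_1=d-1$, $r_2=1$, i.e.\ a normalized type $(0,\ldots,0,b)$ with $b<0$ \emph{arbitrary} --- not $b=-1$. Indeed ${F^m}^{\ast}H_d$ has splitting type $(0,\ldots,0,-p^m)$ on every line, so if your claimed list of candidate types were correct, the Frobenius pullbacks in the statement could never occur; the identity $-b=p^m$ is a \emph{conclusion} of the Frobenius analysis, not an input to the classification of types. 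Moreover you never address the branch $a\neq 0$ of \eqref{chow}, which forces $d=n-d$ and produces the bundles ${F^m}^{\ast}Q_{n-d}$ and ${F^m}^{\ast}Q_{n-d}^{\vee}$ (Proposition \ref{prop}); this is a genuinely separate case and cannot be absorbed into ``up to twist and dual'' without the observation that for $d=n-d$ the universal quotient bundle of $G$ is the universal subbundle of the (isomorphic) dual Grassmannian.

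The second gap is in the Frobenius step itself. The paper does not apply Katz's lemma to the fibrewise classifying map $\varphi$ on $\mathrm{VMRT}_x$; it uses the quotient line bundle $\big((\mathcal{H}_{H_{d}^{\vee}})^{\vee}\big)^{\otimes(-b)}$ of ${q_1}^{\ast}E$ to produce a single $G$-morphism $\sigma:\bar F\rightarrow\mathbb{P}(E)$, and shows $d\sigma=0$ by the elementary observation that on each $q_2$-fibre $d\sigma$ is a map $\mathcal{O}_{\widetilde L}(-b)^{\oplus d-1}\rightarrow\mathcal{O}_{\widetilde L}(1)^{\oplus n-2}$ (Lemma \ref{ta}), which vanishes as soon as $-b\ge 2$; no ``divisibility-by-$p$ of Chern data'' enters at this stage. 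Katz's lemma then factors $\sigma$ through $F_{\bar F/G}^{m}$, and comparing $d\sigma'$ on the twist $\bar F^{(p^m)}$ with the divisibility $p^m\mid -b$ pins down $-b=p^m$, after which pushing forward to $F_1^{(p^m)}=\mathbb{P}({F^m}^{\ast}H_d)$ identifies $E\cong{F^m}^{\ast}H_d$. Your fibrewise version would additionally have to show that the Frobenius exponent is independent of $x$ and then descend the resulting data to $G$, neither of which is addressed; and the separate treatment of $b=-1$ (where no Frobenius factorization is needed and one gets $E\cong H_d$ directly by projecting to $F_1$) is missing.
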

\begin{proof}
If $a=0$ in the equality (\ref{chow}) and $E$ can't split as a sum of line bundles, then by the assertion of Guyot (\cite{ref15} Corollary 4.1.1 b), the expression of $c_{{q_1}^{*} E}(T)$ can only contain $u_1$ and $u_2$ and $r_1=d-1$, $r_2=1$. So, after twisting with an appropriate power of $\mathcal{O}_{G}(1)$ and dualizing if necessary, we can let $u_1=0$ and assume $E$ is of type $(0,0,\ldots,0,b), b<0$. Then we can write \[
c_{{q_1}^{*} E}(T)=\prod_{i=1}^{d}(T+bX_i).
\]
So we get \[
c_{H N_{q}^{1}\left({q_1}^{*} E\right)}(T)=\prod_{i=1}^{d-1}(T+bX_i)~ \text{and} ~  c_{{q_1}^{*} E/H N_{q}^{1}\left({q_1}^{*} E\right)}(T)=T+bX_d.
\]
By lemma \ref{pic}, we get ${q_1}^{*} E/H N_{q}^{1}\left({q_1}^{*} E\right)\cong \big(({\mathcal{H}_{H_{d}^{\vee}})^{\vee}}\big)^{\otimes(-b)}$. Hence on $\bar{F}$, we have the following exact sequence
\begin{equation}\label{o}
0\rightarrow H N_{q}^{1}\left({q_1}^{*} E\right)\rightarrow {q_1}^\ast E\rightarrow  \big(({\mathcal{H}_{H_{d}^{\vee}})^{\vee}}\big)^{\otimes(-b)}\rightarrow 0.
\end{equation}
By the universal property of $\mathbb{P}(E)$, there is a unique $G$-morphism $\sigma:\bar{F}\rightarrow \mathbb{P}(E)$ such that \[
\sigma^{\ast}\mathcal{O}_{\mathbb{P}(E)}(1)= \big((\mathcal{H}_{H_{d}^{\vee}})^{\vee}\big)^{\otimes(-b)}, \sigma^{\ast}\Omega_{\mathbb{P}(E)/G}=H N_{q}^{1}\left({q_1}^{*} E\right)\otimes \big(\mathcal{H}_{H_{d}^{\vee}}\big)^{\otimes (-b)}.
\]

Let's consider the following diagram

\begin{align}
	\xymatrix{
		F_1   \ar[rrdd]_{q_{11}} && \bar{F}\ar[dd]^{q_1}\ar[ll]_{pr_1} \ar[rr]^{pr_2}  && F_2 \ar[lldd]^{q_{12}} \\
		\\
		&&G.
	}
\end{align}

Case 1. $b=-1$:

Projecting the exact sequence\[
0\rightarrow H N_{q}^{1}\left({q_1}^{*} E\right)\rightarrow {q_1}^\ast E\rightarrow  ({\mathcal{H}_{H_{d}^{\vee}})^{\vee}}\rightarrow 0
\]
onto $F_1$ and by Remark \ref{g}, we get the exact sequence ($R^1{pr_1}_{*}H N_{q}^{1}\left({q_1}^{*} E\right)=0$)
\[
0\rightarrow {pr_1}_{*}H N_{q}^{1}\left({q_1}^{*} E\right)\rightarrow {q_{11}}^\ast E\rightarrow \mathcal{O}_{F_1}(1)\rightarrow 0.
\]
Restricting the above exact sequence to a fibre of ${q_{11}}$, ${q_{11}}^{-1}(x)\cong \mathbb{P}_k^{d-1}$, we see that \[
 {pr_1}_{*}H N_{q}^{1}\left({q_1}^{*} E\right)|_ {q_{11}^{-1}(x)}\cong \Omega_{\mathbb{P}_k^{d-1}}(1).
\]
Hence we get ${q_{11}}_{*}({pr_1}_{*}H N_{q}^{1}\left({q_1}^{*} E\right))=\textbf{R}^{1}{q_{11}}_{*}({pr_1}_{*}H N_{q}^{1}\left({q_1}^{*} E\right))=0$. In particular, \[E\cong {q_{11}}_{*}\mathcal{O}_{F_1}(1)\cong H_d.\]
Case 2. $b<-1$:

By restricting the induced map $d\sigma: \sigma^{\ast}\Omega_{\mathbb{P}(E)/G}=H N_{q}^{1}\left({q_1}^{*} E\right)\otimes \big({\mathcal{H}_{H_{d}^{\vee}}}\big)^{\otimes(-b)}\rightarrow\Omega_{\bar{F}/G}$ to any $q_2$-fibre $\widetilde{L}={q_2}^{-1}(L)\subseteq \bar{F}$, and by Lemma \ref{ta}, we get \[
d\sigma|\widetilde{L}:\mathcal{O}_{\widetilde{L}}(-b)^{\oplus d-1}\rightarrow\mathcal{O}_{\widetilde{L}}(1)^{\oplus n-2}.
\]
Because $b<-1$, we have $d\sigma|{q_2}^{-1}(L)=0$ for all $L\in F(d-1,d+1)$, i.e. $d\sigma=0$.  By Lemma \ref{ka}, $\sigma$ can be factored through the relative Frobenius morphism $F_{\bar{F}/G}^m$ for some positive integer $m$:
\[
\xymatrix{
	\bar{F} \ar@/^2pc/[rrrr]^{\sigma}   \ar[rrdd]_{q_1}\ar[rr]^{F_{\bar{F}/G}^m} && \bar{F}^{(p^m)}\ar[dd]^{q_1'} \ar[rr]^{\sigma'} && \mathbb{P}(E) \ar[lldd]^{\pi} \\
	\\
	&&G.
}
\]
Let's consider the following diagram:
\begin{align}
\xymatrix{
	\bar{F}_1^{(p^m)}   \ar[rrdd]_{q_{11}'}  && \bar{F}^{(p^m)}\ar[dd]^{q_1'}\ar[ll]_{pr_1'}\ar[rr]^{pr_2'} && \bar{F}_2^{(p^m)}  \ar[lldd]^{q_{12}'}. \\
	\\
	&&G.
}
\end{align}
By Remark \ref{F}, we have $\bar{F}_1^{(p^m)}=\mathbb{P}(\bar{F}^{m^{\ast}}(H_d))$, $\bar{F}_2^{(p^m)}=\mathbb{P}(\bar{F}^{m^{\ast}}(Q_{n-d}^{\vee}))$.

Because  $\sigma$ can be factored through the relative Frobenius morphism $F_{\bar{F}/G}^m$, hence on $\bar{F}^{(p^m)}$, we have the exact sequence  \nocite{guyot1985caracterisation}
\begin{equation}\label{q}
0\rightarrow (H N_{q}^{1}\left({q_1}^{*} E\right))'\rightarrow {q_1'}^\ast E\rightarrow\big(({\mathcal{H}'_{H_{d}^{\vee}})^{\vee}}\big)^{\otimes{(\frac{-b}{p^m}})}\rightarrow0.
\end{equation}
where $\mathcal{H}'_{H_{d}^{\vee}}$ is the tautological bundle on $\bar{F}^{(p^m)}$ associated to $\bar{F}_1^{(p^m)}$, and the pull back of the exact sequence (\ref{q}) under $F_{\bar{F}/G}^m$ is the exact sequence (\ref{o}).
We also have the reduced map
 \[d{\sigma'}:{\sigma'}^\ast\Omega_{\mathbb{P}(E)/G}\rightarrow \Omega_{\bar{F}^{(p^m)}/G}.\]
By restricting the map to any  $q_2$-fiber, we get
\[
d\sigma'|\widetilde{L}:\mathcal{O}_{\widetilde{L}}(-b)^{\oplus d-1}\rightarrow\mathcal{O}_{\widetilde{L}}(p^m)^{\oplus n-2}.
\]
By Lemma \ref{ka}, we may assume $-b\leq p^m$. On the other hand, we have $p^m|-b$, thus $-b=p^m$. On $\bar{F}^{(p^m)}$, we now have the following exact sequence\[
0\rightarrow (H N_{q}^{1}\left({q_1}^{*} E\right))'\rightarrow {q_1'}^\ast E\rightarrow({\mathcal{H}'_{H_{d}^{\vee}})^{\vee}}\rightarrow0.
\]
By projecting the exact sequence to $\bar{F}_1^{(p^m)}$, we get an exact sequence ($R^1{pr_1'}_{*}(H N_{q}^{1}\left({q_1}^{*} E\right))'=0$)
\[
0\rightarrow {pr_1'}_{*}(H N_{q}^{1}\left({q_1}^{*} E\right))'\rightarrow {q_{11}'}^\ast E\rightarrow \mathcal{O}_{\bar{F}_1^{(p^m)}}(1)\rightarrow 0.
\]
As in Case 1, we find ${q_{11}'}_{*}({pr_1'}_{*}(H N_{q}^{1}\left({q_1}^{*} E\right))')=\textbf{R}^{1}{q_{11}'}_{*}({pr_1'}_{*}(H N_{q}^{1}\left({q_1}^{*} E\right))')=0$. In particular, \[
E\cong {q_{11}'}_{\ast}\mathcal{O}_{\bar{F}_1^{(p^m)}}(1)\cong {F^m}^{\ast}H_d.\]
If $a\neq0$, the equation (\ref{chow}) implies that $d=n-d$. Suppose that $E$ can't split as a sum of line bundles. After twisting with an appropriate power of $\mathcal{O}_{G}(1)$ and dualizing if necessary, we can assume $E$ is of type $(0,0,\ldots,0,\beta)(\beta<0)$. By Proposition \ref{prop} below, we get $E\cong {F^m}^{\ast}Q_{n-d}^{\vee}(m\geq 0)$, where $Q_{n-d}^{\vee}$ can be viewed as the universal subbundle of the dual Grassmannian. Hence we get the desired result.
 \end{proof}

\begin{prop}\label{prop}
Let $E$  be a uniform vector bundles over $G$ of rank $n-d$, and $a\neq0$. Then $E\cong\oplus_{i=1}^{n-d}\mathcal{O}_G(a_i)$, $E\cong {F^m}^{\ast}Q_{n-d}^{\vee}\otimes\mathcal{O}_G(v_1)$ or $E\cong {F^m}^{\ast}Q_{n-d}\otimes\mathcal{O}_G(v_2)$, where $m\geq 0$ and $v_1, v_2\in \mathbb{Z}$.		
\end{prop}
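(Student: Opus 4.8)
The plan is to run the argument of Theorem \ref{z} on the ``quotient side'' of the incidence variety $\bar{F}=F(d-1,d,d+1)$: everywhere the proof of Theorem \ref{z} uses the universal subbundle $H_d$, the projective bundle $F_1=\mathbb{P}(H_d)=F(d-1,d)$, the tautological bundle $\mathcal{H}_{H_d^{\vee}}$ and the variable $X_d$, I would substitute $Q_{n-d}^{\vee}$, the projective bundle $F_2=\mathbb{P}(Q_{n-d}^{\vee})=F(d,d+1)$, the tautological bundle $\mathcal{H}_{Q_{n-d}}$ and the variable $X_{d+1}$. Since now $\operatorname{rank}E=n-d$, the polynomial $c_{{q_1}^{\ast}E}(T)$ has degree $n-d$, which is exactly the degree of the lowest Chow relation $\sum_{n-d}(X_1,\ldots,X_{d+1})$ of Lemma \ref{zq}; Whitney's formula therefore gives an equation of the same shape as (\ref{chow}), but here the correction coefficient $a$ may be nonzero in the top Chern class without forcing $d=n-d$. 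If $E$ splits we obtain the first alternative, so assume $E$ does not split. Exactly as in Theorem \ref{z} (via \cite{ref15}, Corollary 4.1.1(b), which is characteristic-free), the splitting type then takes only two values, with multiplicities $n-d-1$ and $1$; after twisting by a power of $\mathcal{O}_G(1)$ and dualizing if necessary I may assume it is $(0,\ldots,0,\beta)$ with $\beta<0$, so that the relative Harder-Narasimhan filtration of ${q_1}^{\ast}E$ has length two.

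The crucial step, which I expect to be the main obstacle, is to pin down the rank-one Harder-Narasimhan quotient. Writing
\[
c_{{q_1}^{\ast}E}(T)=c_{HN_q^{1}({q_1}^{\ast}E)}(T)\cdot c_{{q_1}^{\ast}E/HN_q^{1}({q_1}^{\ast}E)}(T)+a\sum_{n-d}(X_1,\ldots,X_{d+1}),
\]
the point is that the nonzero correction term $a\sum_{n-d}$ is precisely what distinguishes the $Q$-side from the $H$-side, forcing the rank-one quotient to involve $X_{d+1}$ rather than $X_d$. Working modulo the relation $\sum_{n-d}$ (Lemma \ref{zq}) and using $c_{\mathcal{H}_{Q_{n-d}}}(T)=T-X_{d+1}$ (Lemma \ref{pic}), I expect to identify the quotient as $\big((\mathcal{H}_{Q_{n-d}})^{\vee}\big)^{\otimes(-\beta)}$, yielding on $\bar{F}$ the exact sequence
\[
0\rightarrow HN_q^{1}({q_1}^{\ast}E)\rightarrow {q_1}^{\ast}E\rightarrow \big((\mathcal{H}_{Q_{n-d}})^{\vee}\big)^{\otimes(-\beta)}\rightarrow 0.
\]
This Chow-theoretic bookkeeping --- isolating the contribution of $a\sum_{n-d}$ so as to read off the $X_{d+1}$-dependence of the quotient --- is where the argument differs in substance, not merely in notation, from Theorem \ref{z}.

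Once this sequence is available the rest is parallel to Theorem \ref{z}. By the universal property of $\mathbb{P}(E)$ it defines a $G$-morphism $\sigma:\bar{F}\rightarrow\mathbb{P}(E)$ with $\sigma^{\ast}\mathcal{O}_{\mathbb{P}(E)}(1)=\big((\mathcal{H}_{Q_{n-d}})^{\vee}\big)^{\otimes(-\beta)}$ and $\sigma^{\ast}\Omega_{\mathbb{P}(E)/G}=HN_q^{1}({q_1}^{\ast}E)\otimes(\mathcal{H}_{Q_{n-d}})^{\otimes(-\beta)}$. In Case $1$, $\beta=-1$, I would project the sequence onto $F_2$ via $pr_2$ (using Remark \ref{g}, ${pr_2}_{\ast}\mathcal{H}_{Q_{n-d}}=\mathcal{O}_{F_2}(-1)$, and $R^1{pr_2}_{\ast}HN_q^{1}({q_1}^{\ast}E)=0$) to obtain $0\rightarrow {pr_2}_{\ast}HN_q^{1}({q_1}^{\ast}E)\rightarrow {q_{12}}^{\ast}E\rightarrow\mathcal{O}_{F_2}(1)\rightarrow 0$; restricting to a fibre of $q_{12}$, which is a $\mathbb{P}_k^{n-d-1}$, identifies ${pr_2}_{\ast}HN_q^{1}({q_1}^{\ast}E)$ there with $\Omega_{\mathbb{P}_k^{n-d-1}}(1)$, whence ${q_{12}}_{\ast}({pr_2}_{\ast}HN_q^{1}({q_1}^{\ast}E))=\mathbf{R}^{1}{q_{12}}_{\ast}({pr_2}_{\ast}HN_q^{1}({q_1}^{\ast}E))=0$ and $E\cong {q_{12}}_{\ast}\mathcal{O}_{F_2}(1)\cong Q_{n-d}^{\vee}$. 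In Case $2$, $\beta<-1$, I would restrict $d\sigma$ to a $q_2$-fibre $\widetilde{L}$; by Lemma \ref{ta} it becomes $\mathcal{O}_{\widetilde{L}}(-\beta)^{\oplus(n-d-1)}\rightarrow\mathcal{O}_{\widetilde{L}}(1)^{\oplus(n-2)}$, which vanishes since $-\beta>1$, so $d\sigma=0$ and Katz's Lemma \ref{ka} factors $\sigma$ through the relative Frobenius $F_{\bar{F}/G}^{m}$. Repeating the Case $1$ analysis on $\bar{F}^{(p^m)}$, now projecting onto $\bar{F}_2^{(p^m)}=\mathbb{P}(\bar{F}^{m^{\ast}}(Q_{n-d}^{\vee}))$ (Remark \ref{F}) and comparing the reduced map $d\sigma'$ on $q_2$-fibres, forces $-\beta=p^m$ and yields $E\cong {F^m}^{\ast}Q_{n-d}^{\vee}$. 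Undoing the initial normalization then produces the three alternatives $\oplus_{i=1}^{n-d}\mathcal{O}_G(a_i)$, ${F^m}^{\ast}Q_{n-d}^{\vee}\otimes\mathcal{O}_G(v_1)$ and ${F^m}^{\ast}Q_{n-d}\otimes\mathcal{O}_G(v_2)$ of the statement.
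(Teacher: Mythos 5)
Your proposal follows essentially the same route as the paper: reduce via Guyot's classification to a two-step relative Harder--Narasimhan filtration with splitting type $(0,\ldots,0,\beta)$, use the Chow-ring relation $\sum_{n-d}(X_1,\ldots,X_{d+1})$ together with Lemma \ref{pic} to identify the rank-one quotient as $\big((\mathcal{H}_{Q_{n-d}})^{\vee}\big)^{\otimes(-\beta)}$, and then rerun the $\mathbb{P}(E)$/Katz/Frobenius argument of Theorem \ref{z} on the $F_2=\mathbb{P}(Q_{n-d}^{\vee})$ side. The paper makes your ``main obstacle'' step explicit via the factorization $c_{{q_1}^{*}E}(T)-\beta^{n-d}\sum_{n-d}(X_1,\ldots,X_{d+1})=(T-\beta X_{d+1})\sum_{n-d-1}(T,\beta X_1,\ldots,\beta X_{d+1})$ and otherwise simply cites the proof of Theorem \ref{z}, which you have spelled out correctly.
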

\begin{proof}
By the assertion of Guyot (\cite{ref15} Lemma 4.2.3), if $E$ can't split as a sum of line bundles, then there are two cases:

1) $r_1=n-d-1$, $r_2=1$: after tensoring $E$ with a line bundle, we may assume $c_{{q_1}^{*}E}(T)=\sum_{n-d}(T,\beta X_1,\ldots,\beta X_d)~(\beta<0).$

2) $r_1=1$, $r_2=n-d-1$: after tensoring $E$ with a line bundle, we may assume $c_{{q_1}^{*}E}(T)=\sum_{n-d}(T,-\beta X_1,\ldots,-\beta X_d)~(\beta<0).$

In the first case, we can write
\begin{eqnarray}
&&c_{{q_1}^{*}E}(T)-\beta^{n-d}\sum_{n-d}( X_1,\ldots, X_{d+1})\\
&=&(T-\beta X_{d+1})\big(\sum_{n-d-1}(T,\beta X_1,\ldots,\beta X_{d+1})\big).
\end{eqnarray}
So we get \[
c_{H N_{q}^{1}\left({q_1}^{*} E\right)}(T)=\sum_{n-d-1}(T,\beta X_1,\ldots,\beta X_{d+1}) ~\text{and}~  c_{{q_1}^{*} E/H N_{q}^{1}\left({q_1}^{*} E\right)}(T)=T-\beta X_{d+1}.
\]
By Lemma \ref{pic}, we get ${q_1}^{*} E/H N_{q}^{1}\left({q_1}^{*} E\right)\cong \big(({\mathcal{H}_{Q_{n-d}})^{\vee}}\big)^{\otimes(-\beta)}$. Hence on $\bar{F}$, we have the following exact sequence
\begin{equation}
0\rightarrow H N_{q}^{1}\left({q_1}^{*} E\right)\rightarrow {q_1}^\ast E\rightarrow  \big(({\mathcal{H}_{Q_{n-d}})^{\vee}}\big)^{\otimes(-\beta)}\rightarrow 0.
\end{equation}
According to the proof in the above theorem, we get $E\cong {F^m}^{\ast}Q_{n-d}^{\vee}$.
For the second case, we get $E\cong {F^m}^{\ast}Q_{n-d}~(m\geq 0)$ for the similar reason.
\end{proof}

 Therefore, we have proved Theorem \ref{m} completely.

According to \cite{ref24} Section 3.3, the family of semistable torsion free sheaves over normal projective spaces in characteristic zero forms a bounded family. But it is still unknown whether the same result is true in characteristic $p$. But we conjecture it is true over Grassmannian $G$.

\begin{conj}
Denote $\mathfrak{F}^{ss}_G(r, c_1, c_2)$ to be the class of all semistable (in any sense) torsion free sheaves $\mathcal{F}$ on $G$ of rank $r$ with $c_i(\mathcal{F})=c_i$ for $i=1,2$ in characteristic $p$. Then $\mathfrak{F}^{ss}_G(r, c_1, c_2)$ forms a bounded family.
\end{conj}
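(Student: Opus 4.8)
The plan is to follow the general strategy that establishes boundedness of semistable sheaves in positive characteristic, specialised to the Grassmannian $G$, and to reduce everything to a uniform bound on global sections via Kleiman's criterion. First I would fix the polarisation given by the Pl\"ucker bundle $\mathcal{O}_G(1)$ and recall that, since $G$ is smooth and projective of dimension $d(n-d)$, the Hilbert polynomial of a coherent sheaf is determined by its full Chern character through Hirzebruch--Riemann--Roch; fixing only $r$, $c_1$, $c_2$ therefore does not a priori fix the Hilbert polynomial once $\dim G\geq 3$. The first reduction is thus to bound the higher Chern classes. For this I would invoke a Bogomolov-type inequality valid in characteristic $p$: for a semistable $\mathcal{F}$ the discriminant $\Delta(\mathcal{F})=2rc_2-(r-1)c_1^2$ is bounded below, and an iterated restriction-to-hyperplane-section argument then bounds the remaining Chern classes $c_3,\ldots$ in terms of $r$, $c_1$, $c_2$. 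This leaves only finitely many Hilbert polynomials, so it suffices to prove boundedness of the subfamily with a single fixed Hilbert polynomial $P$.

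For a fixed $P$, Kleiman's boundedness criterion reduces the problem to producing a constant $C$, independent of $\mathcal{F}$, for which $h^0(G,\mathcal{F}(C))$ is uniformly bounded (equivalently, a uniform bound on the Castelnuovo--Mumford regularity). To bound $h^0$ I would restrict $\mathcal{F}$ to a general line $L\subseteq G$, using the intrinsic notion of semistability defined via the lines on the uniruled variety $G$ (Definition \ref{i}), which for the Pl\"ucker polarisation agrees with the Mumford--Takemoto slope. A Grauert--M\"ulich--Barth type estimate of the kind proved for flag varieties in Corollary \ref{gap} would then control the splitting type $\mathcal{F}|L\cong\bigoplus_j\mathcal{O}_L(a_j)$, forcing the gaps $a_j-a_{j+1}$ to be small; combined with a Le Potier--Simpson type estimate this bounds $h^0(G,\mathcal{F})$ and its twists in terms of the splitting type and the rank, which is exactly the input that Kleiman's criterion requires.

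The hard part, and the reason the statement is posed as a conjecture rather than a theorem, is that the Grauert--M\"ulich argument breaks down in characteristic $p$: semistability is not preserved under the absolute Frobenius $F_G$, so the restriction of $\mathcal{F}$ to a general line need not have balanced splitting type, and the slope gaps can a priori grow with each Frobenius twist. The essential task is therefore to control the strong Harder--Narasimhan data, that is, to bound the instability $\mu_{\max}({F_G^m}^{\ast}\mathcal{F})-\mu_{\min}({F_G^m}^{\ast}\mathcal{F})$ uniformly (after normalising by $p^m$) across the whole family. Here I expect the Frobenius factorisation technique used in the proof of Theorem \ref{z} to be the right tool: Katz's Lemma \ref{ka} detects when a destabilising comparison map on differentials vanishes and hence factors through the relative Frobenius $F_{\bar{F}/G}$, which should bound the number of Frobenius steps after which the maximal destabilising slope stabilises. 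Making this bound uniform in $\mathcal{F}$ --- so that the growth of $\mu_{\max}({F_G^m}^{\ast}\mathcal{F})$ is controlled purely by $r$, $c_1$, $c_2$ and the geometry of $G$ --- is the crux of the argument and the principal obstacle to a complete proof.
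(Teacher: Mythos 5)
The statement you have been asked to prove is stated in the paper as a \emph{conjecture}, and the paper offers no proof of it: the authors explicitly record, in the remark immediately following it, that boundedness in characteristic $p$ is known (by Langer) only for \emph{strongly} semistable sheaves, and that the general semistable case is open even for Grassmannians. So there is no hidden argument in the paper against which your proposal could be matched.

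As a proof attempt, your text has a genuine gap, and you name it yourself. The reduction chain you describe (bound the discriminant, reduce to finitely many Hilbert polynomials, apply Kleiman's criterion, control $h^0$ by restriction to general lines) is the standard route, but every step after the first hinges on the one ingredient you do not supply: a bound on $\mu_{\max}({F_G^m}^{\ast}\mathcal{F})$, uniform over the whole family after normalising by $p^m$. A Grauert--M\"ulich type estimate in characteristic $p$ requires exactly this input, because semistability is not preserved under Frobenius pullback; and Corollary \ref{gap} of the paper is proved only in characteristic zero, so it cannot be invoked here. Katz's Lemma \ref{ka}, as used in the proof of Theorem \ref{z}, factors a single morphism with vanishing differential through a relative Frobenius; it gives no control over the destabilising subsheaves of an arbitrary member of the family, and nothing in your sketch converts it into the uniform bound you need. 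Since you explicitly flag this step as the principal obstacle to a complete proof, what you have written is a research plan rather than a proof, and it leaves the conjecture exactly as open as the paper leaves it.
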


\begin{rem}
\emph{Langer (\cite{ref25}) proved that the family of strongly semistable torsion free sheaves over normal projective spaces in arbitrary characteristic form a bounded family. But it is still unknown that, even for Grassmannians, if the statement is true for general semistable torsion free sheaves in characteristic $p$.}
\end{rem}

\section{Vector bundles on flag varieties}\label{flag}
Let $F:=F(d_1,\ldots,d_s)$ be the flag manifold parameterizing flags \[V_{d_1}\subseteq\cdots\subseteq V_{d_s}\subseteq k^n\]
where $dim(V_{d_i})=d_i, 1\le i \le s$. In this section, we suppose that the characteristic of $k$ is zero.

For any integer $i (1\le i \le s)$, there is a flag $V_{d_1}\subseteq\cdots V_{d_{i-1}}\subseteq V_{d_i-1}\subseteq V_{d_i+1}\subseteq V_{d_{i+1}}\subseteq\cdots V_{d_s}\subseteq k^n$. It's not hard to see that the set of flags $V_{d_1}\subseteq\cdots V_{d_{i-1}}\subseteq W\subseteq V_{d_{i+1}}\subseteq\cdots V_{d_s}\subseteq k^n$ such that\[
V_{d_i-1}\subseteq W\subseteq V_{d_i+1},~dim(W)=d_i
\]
is the projectivization of the quotient space $V_{d_i+1}/V_{d_i-1}$, so the set of such flags is isomorphic to $\mathbb{P}_k^1$. It follows that any $l\in F(d_1,\ldots,d_{i-1},d_i-1,d_i+1,d_{i+1},\ldots,d_s)$ determines a line $L\subseteq F$. We denote by $F^{(i)}:=F(d_1,\ldots,d_{i-1},d_i-1,d_i+1,d_{i+1},\ldots,d_s)$ the $i$-th irreducible component of the manifold of lines in $F$. (We re-emphasize that if the two adjacent integers in the expression of flag varieties such as $F^{(i)}$ are equal, we keep only one of them.)

We can consider $F^{(i)}$ in the following two cases:

Case I: $d_i-1=d_{i-1}$ and $d_i+1=d_{i+1}$, then we have the natural projection $F\rightarrow F^{(i)}$.

Case II: $d_i-1\neq d_{i-1}$ or $d_i+1\neq d_{i+1}$, then we have the \emph{standard diagram}
\begin{align}
\xymatrix{
	F(d_1,\ldots,d_{i-1},d_i-1,d_i,d_i+1,d_{i+1},\ldots d_s)\ar[d]^{q_1}   \ar[r]^-{q_2} & F^{(i)}\\
	F=F(d_1,\ldots,d_s).
}
\end{align}
On the flag manifold $F$, we denote by $H_{d_i}$ the \emph{universal subbundle} whose fiber at a point $[\Lambda]=[V_{d_1}\subseteq\cdots\subseteq V_{d_s}\subseteq k^n]\in F$ is the subspace $V_{d_i}$; that is,
		\[(H_{d_i})_{[\Lambda]}=V_{d_i}.\]
Set $H_{d_i,d_j}=H_{d_j}/H_{d_i}(j\ge i)$  as the \emph{universal quotient bundle}.

Let $E$ be an algebraic $r$-bundle over $F$. According to the theorem of Grothendieck, for every $l\in F^{(i)}$, there is an $r$-tuple\[
a_E^{(i)}(l)=(a_1^{(i)}(l),\ldots,a_r^{(i)}(l))\in\mathbb{Z}^{r}~\text{with}~ a_1^{(i)}(l)\geq\cdots\geq a_r^{(i)}(l)
\]
such that $E|L\cong\bigoplus_{j=1}^{r}\mathcal{O}_{L}(a_j^{(i)}(l))$. We give $\mathbb{Z}^{r}$ the lexicographical ordering, i.e., $(a_1,\ldots,a_r)\le(b_1,\ldots,b_r)$ if the first non-zero difference $b_i-a_i$ is positive. Let \[
\underline{a}_E^{(i)}=\inf_{l\in F^{(i)}}a_E^{(i)}(l).
\]
\begin{definition}\label{UEi}
$\underline{a}_E^{(i)}$ is the generic splitting type of $E$ with respect to $F^{(i)}$, $S_E^{(i)}=\{l\in F^{(i)}|a_E^{(i)}(l)>\underline{a}_E^{(i)}\}$ is the set of jump lines with respect to $F^{(i)}$. We define $U_E^{(i)}:=F^{(i)}\backslash S_E^{(i)}$.
\end{definition}
\begin{rem}\label{u}
\emph{Fix integer $i, 1\le i \le s$. Let \[
M_t(a_1,\ldots,a_t)=\{l\in F^{(i)}|(a_1^{(i)}(l),\ldots,a_t^{(i)}(l))>(a_1,\ldots,a_t)\}.	
		\]
Because of the semicontinuity theorem, the set $M_1(a_1)=\{l\in F^{(i)}|h^0(L,E(-a_1-1)|L>0)\}$ is Zariski-closed in $F^{(i)}$. By induction on $t$ we see that $S_E^{(i)}=M_r{(\underline{a}_E^{(i)})}$ is Zariski-closed in $F^{(i)}$. Thus $U_E^{(i)}$ is a non-empty Zariski-open subset of $F^{(i)}$ (see \cite{ref14} Lemma 3.2.2).}
\end{rem}
\begin{definition}(\cite{ref21} p.128 or ~\cite{ref23} Proposition 1.6)\label{normal}
Let $X$ be a nonsingular variety over $k$. A coherent sheaf $\mathcal{F}$ over $X$ is called normal if for every open $U\subseteq X$ and every closed subset $A\subseteq U$ of codimension at least $2$, the restriction map\[
\mathcal{F}(U)\rightarrow \mathcal{F}(U\backslash A)
\]
is an isomorphism.	
\end{definition}	
The following result holds over any algebraically closed field.
\begin{prop}\label{y}
Let $E$ be an algebraic vector bundle of rank $r$ over $F:=F(d_1,\ldots,d_s)$ and assume $E|L=\mathcal{O}_{L}^{\oplus r}$ for every line $L$. Then $E$ is trivial.	
\end{prop}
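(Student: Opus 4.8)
The plan is to reduce the general flag variety $F(d_1,\ldots,d_s)$ to the Grassmannian case already settled in Proposition \ref{e}, by an inductive argument on the number of steps $s$ in the flag together with the fibration structure of the standard diagrams. The essential observation is that a flag variety is built up from Grassmannians through a tower of projective bundles, and the triviality of a bundle on all lines should propagate through each stage of this tower exactly as it did in the proof of Proposition \ref{e}.

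First I would choose a convenient integer $i$ and consider the standard diagram relating $F$ to one of its components $F^{(i)}$ of the manifold of lines. Concretely, I would look at the projection that forgets one of the intermediate subspaces, say passing from $F=F(d_1,\ldots,d_s)$ to a flag variety $F'$ with one fewer index, where the fibres of $F\to F'$ are projective spaces swept out by the lines $L$ of $F$. The key point, mirroring the Grassmannian argument, is that the restriction of $E$ to each such fibre is trivial: each fibre is covered by lines $L\subseteq F$, and by hypothesis $E|L=\mathcal{O}_L^{\oplus r}$ for every such line, so $E$ restricted to the fibre is a bundle trivial on all its lines, hence trivial by Proposition \ref{e} (or its projective-space base case). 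This lets me form the direct image $E'$ on $F'$ and obtain an isomorphism of pullbacks, reducing the problem to showing $E'$ is again trivial on all lines.

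The crux is then the analogue of the Claim in the proof of Proposition \ref{e}: I must verify that $E'$ is trivial on every line of $F'$. This follows by using the two projections of the incidence correspondence and the fact that $q_1^{\ast}E$ is trivial on the fibres of the other projection $q_1$, so that $E'$ restricted to any set of the form $q_2(q_1^{-1}(y))$ is trivial; since every line in $F'$ lies in such a set, $E'$ is trivial on all lines of $F'$. By the inductive hypothesis on the length of the flag (with the Grassmannian, and ultimately $\mathbb{P}_k^{n-1}$, as base cases), $E'$ is trivial, whence $q_1^{\ast}E\cong q_2^{\ast}E'$ is trivial and finally $E\cong {q_1}_{\ast}q_1^{\ast}E$ is trivial. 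The bookkeeping in Case I versus Case II of the standard diagram—where in Case I one has a genuine projection $F\to F^{(i)}$ and in Case II the intermediate variety $F(\ldots,d_i-1,d_i,d_i+1,\ldots)$ must be inserted—will need to be handled carefully, but in both cases the fibres are projective spaces and the same triviality propagation applies.

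The main obstacle I expect is ensuring at each inductive stage that the relevant projection really does have its fibres swept out by lines of $F$ lying in the appropriate component $F^{(i)}$, and that the intermediate flag varieties appearing (especially in Case II, where adjacent indices may coincide or differ) still satisfy the hypothesis "$E$ trivial on all lines" after restriction. In other words, the delicate part is organizing the induction so that the hypothesis is preserved exactly—one must check that lines of $F$ restrict to give lines of the smaller flag variety $F'$ and vice versa, so that the generic splitting type $\underline{a}_{E'}^{(i)}=0$ is genuinely inherited. Once this combinatorial compatibility of lines under the forgetful projections is established, the analytic content is entirely supplied by Proposition \ref{e} and the projective-space result, and the proof closes by the same direct-image and projection-formula steps used there.
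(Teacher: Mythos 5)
Your proposal follows essentially the same route as the paper's proof: induction on the number $s$ of steps in the flag, pushing $E$ down along the forgetful projection $F(d_1,\ldots,d_s)\to F(d_2,\ldots,d_s)$ after checking triviality on the fibres via Proposition \ref{e}, and verifying that the direct image $E'$ is trivial on every line of the smaller flag variety because lines downstairs are precisely images of lines upstairs, so the hypothesis is inherited and the induction closes. The only slips are cosmetic: the fibres of this projection are Grassmannians $G(d_1,d_2)$ rather than projective spaces (which is exactly why Proposition \ref{e} and not just the $\mathbb{P}_k^n$ case is needed), and the Case I/Case II dichotomy of the standard diagram plays no role here, since the paper works with the single forgetful projection rather than an incidence correspondence.
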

\begin{proof}
	We prove the theorem by induction on $s$. For $s=1$, the flag manifold is just $G(d_1,n)$, the result holds by Proposition \ref{e}. Suppose the assertion is true for all flag manifolds $F(d_1',\cdots,d_{s-1}')$. Let's consider the natural projection \[
q:F=F(d_1,\ldots,d_s)\rightarrow F(d_2,\ldots,d_s).\]
	It's not hard to see that every $q$-fibre $q^{-1}(x)$ is isomorphic to the Grassmannian $G(d_1,d_2)$. Since the restriction of $E$ to every line in $q$-fibre $q^{-1}(x)$ is trivial by assumption, $E$ is trivial on all $q$-fibres by Proposition \ref{e}. It follows that $E'=q_{*}E$ is an algebraic vector bundle of rank $r$ over $F(d_2,\ldots,d_s)$ and $E\cong q^{*}E'$.
	
\textbf{Claim.} $E'|L=\mathcal{O}_{L}^{\oplus r}$ for every line $L$ in $F(d_2,\ldots,d_s)$.

In fact, let $L$ be a line in the $i$-th $(2\le i\le s)$ irreducible component of the manifold of lines in $F$. Then $q(L)$ is a line in the ${i-1}$-th $(2\le i\le s)$ irreducible component of the manifold of lines in $F(d_2,\ldots,d_s)$. When $L$ runs through all lines in the $i$-th $(2\le i\le s)$ component of the manifold of lines in $F$, $q(L)$ also runs through all lines in the $(i-1)$-th $(2\le i\le s)$ component of the manifold of lines in $F(d_2,\ldots,d_s)$. The projection $q$ induces an isomorphism\[
E'|q(L)\cong q^{*}E'|L\cong E|L.
\]
Since $E|L$ is trivial for all lines $L$ in the $i$-th $(2\le i\le s)$ irreducible component of the manifold of lines in $F$ by assumption, $E'|L$ is trivial for all lines $L$ in the $i$-th $(1\le i\le s-1)$ irreducible component of the manifold of lines in $F(d_2,\ldots,d_s)$. It follows that $E'|L=\mathcal{O}_{L}^{\oplus r}$ for every line $L$ in $F(d_2,\ldots,d_s)$.

By the induction hypothesis, $E'$ is trivial. Thus $E\cong q^{*}E'$ is trivial.
\end{proof}
\begin{lemma}(Descent Lemma \cite{ref19})\label{Descent}
Let $X$, $Y$ be nonsingular varieties over $k$, $f:X\rightarrow Y$ be a surjective submersion with connected fibres and $E$ be an algebraic $r$-bundle over $Y$. Let $\widetilde{K}\subseteq f^{\ast}E$ be a subbundle of rank $t$ in $f^{\ast}E$ and $\widetilde{Q}=f^{\ast}E/\widetilde{K}$ be its quotient. If \[
Hom(T_{X/Y},\mathcal{H}om(\widetilde{K},\widetilde{Q}))=0,
\]
then $\widetilde{K}$ is the form $\widetilde{K}=f^{\ast}K$ for some algebraic subbundle $K\subseteq E$ of rank $t$.
\end{lemma}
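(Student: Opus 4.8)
The plan is to reinterpret the subbundle $\widetilde{K}$ as a classifying morphism into a relative Grassmann bundle, and to show that the hypothesis forces this morphism to be constant along the fibres of $f$, so that it descends. Let $\pi:\mathrm{Gr}_t(E)\to Y$ denote the Grassmann bundle whose fibre over $y\in Y$ parameterizes the $t$-dimensional linear subspaces of $E_y$, equipped with its tautological subbundle $\mathcal{S}\subseteq\pi^{\ast}E$ and quotient $\mathcal{Q}=\pi^{\ast}E/\mathcal{S}$. A rank-$t$ subbundle $\widetilde{K}\subseteq f^{\ast}E$ is the same datum as a $Y$-morphism $g:X\to\mathrm{Gr}_t(E)$, namely the map sending $x$ to the subspace $\widetilde{K}_x\subseteq (f^{\ast}E)_x=E_{f(x)}$, and one has $g^{\ast}\mathcal{S}=\widetilde{K}$, $g^{\ast}\mathcal{Q}=\widetilde{Q}$. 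Moreover $\widetilde{K}$ has the form $f^{\ast}K$ for a rank-$t$ subbundle $K\subseteq E$ precisely when $g$ factors through a section $Y\to\mathrm{Gr}_t(E)$, equivalently when $g$ is constant on each fibre $f^{-1}(y)$: since $f$ is a surjective submersion with connected fibres, a fibrewise-constant $Y$-morphism descends to a section. Thus the entire problem reduces to proving that $g$ is constant on the fibres of $f$.

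To this end I would compute the relative differential of $g$ over $Y$,
\[dg:T_{X/Y}\to g^{\ast}T_{\mathrm{Gr}_t(E)/Y}.\]
The key geometric input is the standard identification of the relative tangent bundle of a Grassmann bundle: at the point corresponding to a subspace $W\subseteq E_y$, the relative tangent space is $\mathrm{Hom}(W,E_y/W)$, so that $T_{\mathrm{Gr}_t(E)/Y}\cong\mathcal{H}om(\mathcal{S},\mathcal{Q})$. Pulling back along $g$ and using $g^{\ast}\mathcal{S}=\widetilde{K}$, $g^{\ast}\mathcal{Q}=\widetilde{Q}$ yields a canonical isomorphism $g^{\ast}T_{\mathrm{Gr}_t(E)/Y}\cong\mathcal{H}om(\widetilde{K},\widetilde{Q})$. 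Consequently $dg$ is a global section of $\mathcal{H}om(T_{X/Y},\mathcal{H}om(\widetilde{K},\widetilde{Q}))$, that is, an element of $\mathrm{Hom}(T_{X/Y},\mathcal{H}om(\widetilde{K},\widetilde{Q}))$. By hypothesis this group vanishes, so $dg=0$. To make the tangent computation completely transparent one may first localize on $Y$ so that $E$ is trivial and $\mathrm{Gr}_t(E)=Y\times\mathbb{G}_k(t-1,r-1)$, reducing $dg$ to the ordinary differential of the classifying map $X\to\mathbb{G}_k(t-1,r-1)$ and identifying it with the second fundamental form of $\widetilde{K}$ along the fibres.

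Finally I would conclude using that we are in characteristic zero. The vanishing of the relative differential $dg$ means precisely that, for each $y$, the restriction $g|_{f^{-1}(y)}:f^{-1}(y)\to\mathrm{Gr}_t(E)_y$ has everywhere-zero differential; since $f^{-1}(y)$ is a reduced connected variety over a field of characteristic zero, such a morphism is constant. Hence $g$ is constant on every fibre of $f$, so it factors as $g=s\circ f$ for a section $s:Y\to\mathrm{Gr}_t(E)$, and the rank-$t$ subbundle $K\subseteq E$ classified by $s$ satisfies $\widetilde{K}=f^{\ast}K$, as required.

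The main obstacle, and the point where characteristic zero is indispensable, is this last step: translating $dg=0$ into constancy of $g$ along the fibres. In positive characteristic the vanishing of the differential would only force $g$ to factor through a relative Frobenius, as in Katz's Lemma~\ref{ka}, which would not yield an honest descended subbundle; it is exactly the combination of the hypothesis $\mathrm{Hom}(T_{X/Y},\mathcal{H}om(\widetilde{K},\widetilde{Q}))=0$ with $\mathrm{char}\,k=0$ that does the work. A secondary technical point is to verify carefully that the isomorphism $g^{\ast}T_{\mathrm{Gr}_t(E)/Y}\cong\mathcal{H}om(\widetilde{K},\widetilde{Q})$ matches $dg$ with the natural fibrewise second fundamental form of $\widetilde{K}\subseteq f^{\ast}E$, which is what makes the hypothesis directly applicable.
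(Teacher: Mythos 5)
The paper does not prove this lemma at all---it is quoted verbatim from the cited reference \cite{ref19} and used as a black box---so there is no internal proof to compare against. Your argument (encode $\widetilde{K}$ as a $Y$-morphism $g:X\to\mathrm{Gr}_t(E)$, identify $g^{\ast}T_{\mathrm{Gr}_t(E)/Y}$ with $\mathcal{H}om(\widetilde{K},\widetilde{Q})$ so that $dg$ lives in the group assumed to vanish, then use characteristic zero to conclude $g$ is constant on the connected reduced fibres and descends to a section) is correct and is exactly the standard proof of the descent lemma in the literature the paper relies on; your remark that characteristic zero is essential, and that in characteristic $p$ one would only obtain a factorization through Frobenius, is also consistent with how the paper confines this lemma to its characteristic-zero section.
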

\begin{lemma}\label{rt}
Let $\widetilde{F^{(i)}}:=F(d_1,\ldots,d_{i-1},d_i-1,d_i,d_i+1,d_{i+1},\ldots,d_s)$,
 $\widetilde{L}={q_2}^{-1}(l)\subseteq\widetilde{F^{(i)}}$ for $l\in F^{(i)}$. If $F^{(i)}$ is in Case II, then for the relative cotangent bundle $\Omega_{\widetilde{F^{(i)}}/F}$, we have\[
\Omega_{\widetilde{F^{(i)}}/F}|\widetilde{L}=\mathcal{O}_{\widetilde{L}}(1)^{\oplus d_{i+1}-d_{i-1}-2}.
\]
\end{lemma}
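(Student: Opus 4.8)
The plan is to compute the relative cotangent bundle $\Omega_{\widetilde{F^{(i)}}/F}$ restricted to a single fibre $\widetilde{L}={q_2}^{-1}(l)$, where $l\in F^{(i)}$. First I would identify the fibre $\widetilde{L}$ geometrically. Recall $\widetilde{F^{(i)}}=F(d_1,\ldots,d_{i-1},d_i-1,d_i,d_i+1,d_{i+1},\ldots,d_s)$ and $q_2$ is the projection forgetting the middle index $d_i$, landing in $F^{(i)}=F(d_1,\ldots,d_{i-1},d_i-1,d_i+1,d_{i+1},\ldots,d_s)$. Fixing a point $l\in F^{(i)}$ fixes a flag $\cdots\subseteq V_{d_i-1}\subseteq V_{d_i+1}\subseteq\cdots$, and the fibre $\widetilde{L}$ parameterizes the choices of the intermediate $d_i$-dimensional space $W$ with $V_{d_i-1}\subseteq W\subseteq V_{d_i+1}$. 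As noted in the preliminaries, this is the projectivization of $V_{d_i+1}/V_{d_i-1}$, hence $\widetilde{L}\cong\mathbb{P}_k^1$.

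\textbf{The relative cotangent along the fibre.}
The key identification is that along $\widetilde{L}$, the relative tangent bundle $T_{\widetilde{F^{(i)}}/F}$ restricts to the relative tangent of the $q_2$-fibration. The obstruction I would flag as the main point is this: unlike the classical Grassmannian case of Lemma \ref{ta}, the fibre $\widetilde{L}$ here sits inside a larger flag, so I must verify that the relative tangent space at $W$ is $\mathrm{Hom}(W/V_{d_i-1},V_{d_i+1}/W)$, and that this Hom-bundle over $\mathbb{P}_k^1$ has the stated splitting. Concretely, over $\widetilde{L}\cong\mathbb{P}(V_{d_i+1}/V_{d_i-1})$ the tautological sub-line-bundle is $W/V_{d_i-1}\cong\mathcal{O}_{\widetilde{L}}(-1)$ and the quotient $V_{d_i+1}/W$ is a rank $d_{i+1}-d_{i-1}-2$ bundle. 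Because $V_{d_i+1}/V_{d_i-1}$ is a fixed vector space of dimension $(d_i+1)-(d_i-1)=2$ on the fibre but the ambient quotient $V_{d_{i+1}}/V_{d_{i-1}}$ contributes the remaining directions, I would compute $T_{\widetilde{F^{(i)}}/F}|\widetilde{L}=\mathcal{H}om(\mathcal{O}_{\widetilde{L}}(-1),\mathcal{Q})$ where $\mathcal{Q}$ is the relevant quotient of rank $d_{i+1}-d_{i-1}-2$, which forces $\mathcal{Q}\cong\mathcal{O}_{\widetilde{L}}^{\oplus(d_{i+1}-d_{i-1}-2)}$ by triviality on the fibre.

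\textbf{Dualizing and concluding.}
From $T_{\widetilde{F^{(i)}}/F}|\widetilde{L}\cong\mathcal{O}_{\widetilde{L}}(1)^{\oplus(d_{i+1}-d_{i-1}-2)}$ I would dualize to obtain $\Omega_{\widetilde{F^{(i)}}/F}|\widetilde{L}\cong\mathcal{O}_{\widetilde{L}}(-1)^{\oplus(d_{i+1}-d_{i-1}-2)}$, but comparing with the stated result I expect the twist to land on $\mathcal{O}_{\widetilde{L}}(1)$; hence the careful bookkeeping of which bundle is the tautological sub and which is the quotient (i.e., whether the Plücker-type polarization introduces the $+1$) is exactly where I must be meticulous. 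The rank count is the robust part: the dimension of the fibre equals $\dim\mathbb{P}(V_{d_i+1}/V_{d_i-1})$ multiplied out across the free directions, and matching $r=d_{i+1}-d_{i-1}-2$ is forced by $\dim\widetilde{L}=1$ together with the number of independent Hom-directions. The Case II hypothesis ($d_i-1\neq d_{i-1}$ or $d_i+1\neq d_{i+1}$) guarantees the standard diagram is genuinely a blow-up-type correspondence rather than a plain projection, so the relative cotangent is nontrivial; I would invoke this to ensure $d_{i+1}-d_{i-1}-2\geq 1$ and that the computation does not degenerate.
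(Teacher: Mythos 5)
There is a genuine gap: you have misidentified which fibration the relative (co)tangent bundle belongs to. $\Omega_{\widetilde{F^{(i)}}/F}$ is the cotangent bundle along the fibres of $q_1$ (the map that forgets $V_{d_i-1}$ and $V_{d_i+1}$), not along the fibres of $q_2$. Your first candidate, $\mathrm{Hom}(W/V_{d_i-1},V_{d_i+1}/W)$, is the tangent bundle of the $q_2$-fibre $\widetilde{L}$ itself, i.e.\ of $\widetilde{F^{(i)}}$ relative to $F^{(i)}$; it has rank $1$ (it is $T_{\mathbb{P}^1}\cong\mathcal{O}_{\widetilde{L}}(2)$), not rank $d_{i+1}-d_{i-1}-2$, so it cannot be the right object. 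Your second candidate, $\mathcal{H}om(\mathcal{O}_{\widetilde{L}}(-1),\mathcal{Q})$ with $\mathcal{Q}$ trivial, has the right rank but yields $T_{\widetilde{F^{(i)}}/F}|\widetilde{L}\cong\mathcal{O}_{\widetilde{L}}(1)^{\oplus(d_{i+1}-d_{i-1}-2)}$ and hence $\Omega_{\widetilde{F^{(i)}}/F}|\widetilde{L}\cong\mathcal{O}_{\widetilde{L}}(-1)^{\oplus(d_{i+1}-d_{i-1}-2)}$ --- the opposite of the statement, as you yourself observe. That discrepancy is not a bookkeeping artifact of the polarization; it signals that the model is wrong. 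The correct restriction of the relative \emph{tangent} bundle to $\widetilde{L}$ really is a sum of copies of $\mathcal{O}_{\widetilde{L}}(-1)$, which is consistent because $\widetilde{L}$ is transverse to the $q_1$-fibres rather than contained in one.

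The paper's computation, which your proposal does not reproduce, goes as follows. A $q_1$-fibre parameterizes the pairs $(V_{d_i-1},V_{d_i+1})$ with $V_{d_{i-1}}\subseteq V_{d_i-1}\subseteq V_{d_i}$ and $V_{d_i}\subseteq V_{d_i+1}\subseteq V_{d_{i+1}}$, so $\widetilde{F^{(i)}}$ is the fibre product over $F$ of $\widetilde{F_1^{(i)}}=\mathbb{P}(H_{d_{i-1},d_i})$ and $\widetilde{F_2^{(i)}}=\mathbb{P}(H_{d_i,d_{i+1}}^{\vee})$; the two relative Euler sequences then give
\[
\Omega_{\widetilde{F^{(i)}}/F}\cong\bigl(H_{d_i-1,d_i}^{\vee}\otimes{q_2}^{\ast}H_{d_{i-1},d_i-1}\bigr)\oplus\bigl(H_{d_i,d_i+1}\otimes{q_2}^{\ast}H_{d_i+1,d_{i+1}}^{\vee}\bigr),
\]
i.e.\ the two summands are $\mathrm{Hom}(V_{d_i}/V_{d_i-1},V_{d_i-1}/V_{d_{i-1}})$ and $\mathrm{Hom}(V_{d_{i+1}}/V_{d_i+1},V_{d_i+1}/V_{d_i})$. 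Restricting to $\widetilde{L}=\mathbb{P}(V_{d_i+1}/V_{d_i-1})$, the factors pulled back via $q_2$ are trivial of ranks $d_i-1-d_{i-1}$ and $d_{i+1}-d_i-1$, while $H_{d_i-1,d_i}^{\vee}|\widetilde{L}=(W/V_{d_i-1})^{\vee}\cong\mathcal{O}_{\widetilde{L}}(1)$ and $H_{d_i,d_i+1}|\widetilde{L}=V_{d_i+1}/W\cong\mathcal{O}_{\widetilde{L}}(1)$ (the quotient line bundle on $\mathbb{P}^1$). Every summand is therefore $\mathcal{O}_{\widetilde{L}}(1)$ and the ranks add up to $d_{i+1}-d_{i-1}-2$. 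To repair your argument you would need to carry out this identification of the $q_1$-relative tangent space as the direct sum of these two Hom-bundles, rather than anything built from a map out of the tautological sub-bundle of $\widetilde{L}$ into a trivial quotient.
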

\begin{proof}
By the definition of vector bundle $H_{d_i,d_j}(j>i)$, it's easy to check that on $\widetilde{F^{(i)}}$, we have the following two exact sequences:
\begin{align}\label{1}
0\rightarrow H_{d_i-1,d_i}^{\vee}\rightarrow {q_1}^{\ast}H_{d_{i-1},d_i}^{\vee} \rightarrow {q_2}^{\ast}H_{d_{i-1},d_i-1}^{\vee} \rightarrow 0,
\end{align}
\begin{align}
0\rightarrow H_{d_i,d_i+1}\rightarrow {q_1}^{\ast}H_{d_i,d_{i+1}}  \rightarrow {q_2}^{\ast}H_{d_i+1,d_{i+1}} \rightarrow 0.
\end{align}

Let $\widetilde{F_1^{(i)}}:=F(d_1,\ldots,d_{i-1},d_i-1,d_i,d_{i+1},\ldots d_s)$, $\widetilde{F_2^{(i)}}:=F(d_1,\ldots,d_{i-1},d_i,d_i+1,d_{i+1},\ldots d_s)$ and consider the following diagram
\begin{align}
\xymatrix{
\widetilde{F_1^{(i)}}  \ar[rrdd]_{q_{11}} && \widetilde{F^{(i)}}\ar[ll]_{pr_1}\ar[rr]^{pr_2}\ar[dd]^{q_1}  && \widetilde{F_2^{(i)}}  \ar[lldd]^{q_{12}} \\
	\\
	&&F.
}
\end{align}
All morphisms in the above diagram are projections. It is not hard to see that $\widetilde{F_1^{(i)}}=\mathbb{P}(H_{d_{i-1},d_i})$, $\widetilde{F_2^{(i)}}=\mathbb{P}(H_{d_i,d_{i+1}}^{\vee})$ and $H_{d_i-1,d_i}^{\vee}~(resp.~  H_{d_i,d_i+1})$ is the tautological line bundle on $\widetilde{F^{(i)}}$ associated to $\widetilde{F_1^{(i)}}~(resp.~ \widetilde{F_2^{(i)}})$, i.e. \[{pr_1}_{*}H_{d_i-1,d_i}^{\vee}=\mathcal{O}_{\widetilde{F_1^{(i)}}}(-1)~(resp.~ {pr_2}_{*}H_{d_i,d_i+1}=\mathcal{O}_{\widetilde{F_2^{(i)}}}(-1)).\]
Projecting the exact sequence \ref{1} onto $\widetilde{F_1^{(i)}}$ and considering the relative Euler sequence, we have the diagram of exact sequences (${R^1pr_1}_{*}H_{d_i-1,d_i}^{\vee}=0$)
$$
\xymatrix{
	0\ar[r] &{pr_1}_{*}H_{d_i-1,d_i}^{\vee}\ar[r] \ar[d]_\cong&	q_{11}^{\ast}H_{d_{i-1},d_i}^{\vee} \ar[r] \ar[d]_\cong&{pr_1}_{*}{q_2}^{\ast}H_{d_{i-1},d_i-1}^{\vee}\ar[r]\ar[d]_\cong&0\\
	0\ar[r] &\mathcal{O}_{\widetilde{F_1^{(i)}}}(-1) \ar[r]  &q_{11}^{\ast}H_{d_{i-1},d_i}^{\vee}\ar[r] & \mathcal{O}_{\widetilde{F_1^{(i)}}}(-1)\otimes T_{\widetilde{F_1^{(i)}}/F}\ar[r]&0.
}
$$
We get $T_{\widetilde{F_1^{(i)}}/F}\cong {pr_1}_{*}(H_{d_i-1,d_i}\otimes {q_2}^{\ast}H_{d_{i-1},d_i-1}^{\vee})$, since $H_{d_i-1,d_i}={pr_1}^{*}\mathcal{O}_{\widetilde{F_1^{(i)}}}(1)$.

Similarly, on $\widetilde{F_2^{(i)}}$, we have the exact sequences ($R^1{pr_2}_{*}H_{d_i,d_i+1}=0$)
$$
\xymatrix{
	0\ar[r] &{pr_2}_{*}H_{d_i,d_i+1}\ar[r] \ar[d]_\cong&	q_{12}^{\ast}H_{d_i,d_{i+1}} \ar[r] \ar[d]_\cong&{pr_2}_{*}{q_2}^{\ast}H_{d_i+1,d_{i+1}}\ar[r]\ar[d]_\cong&0\\
	0\ar[r] &\mathcal{O}_{\widetilde{F_2^{(i)}}}(-1) \ar[r]  &q_{12}^{\ast}H_{d_i,d_{i+1}}\ar[r] & \mathcal{O}_{\widetilde{F_2^{(i)}}}(-1)\otimes T_{\widetilde{F_2^{(i)}}/F}\ar[r]&0.
}
$$
We get $T_{\widetilde{F_2^{(i)}}/F}\cong {pr_2}_{*}(H_{d_i,d_i+1}^{\vee}\otimes {q_2}^{\ast}H_{d_i+1,d_{i+1}})$.
Since $\widetilde{F^{(i)}}$ as $F$-scheme is the fiber product of two $F$-schemes $\widetilde{F_1^{(i)}}$ and $\widetilde{F_2^{(i)}}$, we have
\begin{align}
T_{\widetilde{F^{(i)}}/F}
&\cong \big({pr_1}^{*}T_{\widetilde{F_1^{(i)}}/F}\big)\oplus\big({pr_2}^{*} T_{\widetilde{F_2^{(i)}}/F}\big)\\
&\cong \big({pr_1}^{*}{pr_1}_{*}(H_{d_i-1,d_i}\otimes {q_2}^{\ast}H_{d_{i-1},d_i-1}^{\vee})\big)\oplus\big({pr_2}^{*}{pr_2}_{*} (H_{d_i,d_i+1}^{\vee}\otimes {q_2}^{\ast}H_{d_i+1,d_{i+1}})\big).
\end{align}

The canonical homomorphism ${pr_1}^{*}{pr_1}_{*}(H_{d_i-1,d_i}\otimes {q_2}^{\ast}H_{d_{i-1},d_i-1}^{\vee})\cong H_{d_i-1,d_i}\otimes {q_2}^{\ast}H_{d_{i-1},d_i-1}^{\vee}$, because over each $pr_1$-fibre ${pr_1}^{-1}(l)$, the evaluation map
\begin{align}
&{pr_1}^{*}{pr_1}_{*}(H_{d_i-1,d_i}\otimes {q_2}^{\ast}H_{d_{i-1},d_i-1}^{\vee})|{pr_1}^{-1}(l)\\
&\cong H^0\big({pr_1}^{-1}(l),H_{d_i-1,d_i}\otimes {q_2}^{\ast}H_{d_{i-1},d_i-1}^{\vee}|{pr_1}^{-1}(l)\big)\otimes_{k}\mathcal{O}_{{pr_1}^{-1}(l)}\\
&\cong (H_{d_i-1,d_i}\otimes {q_2}^{\ast}H_{d_{i-1},d_i-1}^{\vee})|{pr_1}^{-1}(l).
\end{align}
is an isomorphism. Similarly, ${pr_2}^{*}{pr_2}_{*} (H_{d_i,d_i+1}^{\vee}\otimes {q_2}^{\ast}H_{d_i+1,d_{i+1}})\cong H_{d_i,d_i+1}^{\vee}\otimes {q_2}^{\ast}H_{d_i+1,d_{i+1}}$.
Hence,\[
\Omega_{\widetilde{F^{(i)}}/F}\cong (H_{d_i-1,d_i}^{\vee}\otimes {q_2}^{\ast}H_{d_{i-1},d_i-1})\oplus (H_{d_i,d_i+1}\otimes {q_2}^{\ast}H_{d_i+1,d_{i+1}}^{\vee})
\]
Finally, we get \[
\Omega_{\widetilde{F^{(i)}}/F}|\widetilde{L}=\mathcal{O}_{\widetilde{L}}(1)^{\oplus d_{i+1}-d_{i-1}-2}.
\]
\end{proof}
\begin{thm}\label{main}
Fix an integer $i, 1\le i \le s$. Let $E$ be an algebraic $r$-bundle over $F$ of type $\underline{a}_E^{(i)}=(a_1^{(i)},\ldots,a_r^{(i)}), a_1^{(i)}\geq\cdots\geq a_r^{(i)}$ with respect to $F^{(i)}$. If for some $t<r$,
\[
a_t^{(i)}-a_{t+1}^{(i)}\geq
\left\{
\begin{array}{ll}
1, & and~F^{(i)}~ \text{is in Case I}\\
2, & and ~F^{(i)}~ \text{is in Case II},
\end{array}
\right.\]
then there is a normal subsheaf $K\subseteq E$ of rank $t$ with the following properties: over the open set $V_E^{(i)}=q_1({q_2}^{-1}(U_E^{(i)}))\subseteq F$, the sheaf $K$ is a subbundle of $E$, which on the line $L\subseteq F$ given by $l\in U_E^{(i)}$ has the form\[
K|L\cong\oplus_{j=1}^{t}\mathcal{O}_L(a_j^{(i)}).
\]
\end{thm}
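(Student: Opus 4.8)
The plan is to construct $K$ by isolating, line by line, the ``positive part'' $\bigoplus_{j\le t}\mathcal{O}_L(a_j^{(i)})$ of $E|L$, and then to show that this construction is algebraic over the locus of non-jumping lines and descends to $F$. I handle both cases through the standard diagram, writing $\widetilde{F^{(i)}}$ for the incidence variety, $q_1\colon\widetilde{F^{(i)}}\to F$, $q_2\colon\widetilde{F^{(i)}}\to F^{(i)}$ for the two projections, and $O_q(1)$ for the $q_2$-relative tautological line bundle with $O_q(1)|\widetilde{L}=\mathcal{O}_{\widetilde{L}}(1)$ on each fibre $\widetilde{L}={q_2}^{-1}(l)$. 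Recall that $q_1$ maps $\widetilde{L}$ isomorphically onto the line $L\subseteq F$ determined by $l$, so $q_1^{\ast}E|\widetilde{L}\cong E|L$. In Case I one has $\widetilde{F^{(i)}}=F$, $q_1=\mathrm{id}$ and $q_2$ the natural projection, so the descent step below is vacuous; this is why Case I tolerates the smaller gap.

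First I would work over $q_2^{-1}(U_E^{(i)})$, a non-empty open subset by Remark \ref{u}, on which the splitting type is constantly $\underline{a}_E^{(i)}$ by Definition \ref{UEi}. Since $a_t^{(i)}>a_{t+1}^{(i)}$, on each fibre $\widetilde{L}$ the bundle $q_1^{\ast}E\otimes O_q(-a_t^{(i)})$ has nonzero sections exactly in the summands of degree $\ge 0$, that is in those indexed by $j\le t$; hence by Grauert's base-change theorem ${q_2}_{\ast}(q_1^{\ast}E\otimes O_q(-a_t^{(i)}))$ is locally free over $U_E^{(i)}$ and I set
\[
\widetilde{K}=\operatorname{Im}\left[{q_2}^{\ast}{q_2}_{\ast}\big(q_1^{\ast}E\otimes O_q(-a_t^{(i)})\big)\otimes O_q(a_t^{(i)})\longrightarrow q_1^{\ast}E\right].
\]
On each fibre the image of the evaluation map is exactly $\bigoplus_{j\le t}\mathcal{O}_{\widetilde{L}}(a_j^{(i)})$, so $\widetilde{K}$ has constant rank $t$; as the quotient $\widetilde{Q}=q_1^{\ast}E/\widetilde{K}$ is fibrewise $\bigoplus_{j>t}\mathcal{O}_{\widetilde{L}}(a_j^{(i)})$, locally free of rank $r-t$, the sheaf $\widetilde{K}$ is a subbundle of $q_1^{\ast}E$ over $q_2^{-1}(U_E^{(i)})$. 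In Case I this already exhibits $\widetilde{K}$ as a subbundle of $E$ over $V_E^{(i)}=q_2^{-1}(U_E^{(i)})$ with $\widetilde{K}|L\cong\bigoplus_{j\le t}\mathcal{O}_L(a_j^{(i)})$, and it remains only to extend it.

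In Case II I would descend $\widetilde{K}$ along $q_1$ using the Descent Lemma \ref{Descent}, applied to the smooth surjection with connected fibres $q_1\colon q_2^{-1}(U_E^{(i)})\to V_E^{(i)}$. Its hypothesis $\mathrm{Hom}(T_{\widetilde{F^{(i)}}/F},\mathcal{H}om(\widetilde{K},\widetilde{Q}))=0$ is the crux, and it is exactly here that the stronger gap enters. I evaluate this global Hom, namely $H^0\big(q_2^{-1}(U_E^{(i)}),\Omega_{\widetilde{F^{(i)}}/F}\otimes\mathcal{H}om(\widetilde{K},\widetilde{Q})\big)$, by pushing forward along $q_2$: on a fibre $\widetilde{L}$ Lemma \ref{rt} gives $\Omega_{\widetilde{F^{(i)}}/F}|\widetilde{L}=\mathcal{O}_{\widetilde{L}}(1)^{\oplus(d_{i+1}-d_{i-1}-2)}$, while $\mathcal{H}om(\widetilde{K},\widetilde{Q})|\widetilde{L}=\bigoplus_{j\le t<k}\mathcal{O}_{\widetilde{L}}(a_k^{(i)}-a_j^{(i)})$ with every exponent $\le a_{t+1}^{(i)}-a_t^{(i)}\le -2$. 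Thus each summand of $\Omega_{\widetilde{F^{(i)}}/F}\otimes\mathcal{H}om(\widetilde{K},\widetilde{Q})$ has fibrewise degree $\le -1<0$, its $q_2$-direct image vanishes, and so does the global Hom. The Descent Lemma then produces a rank-$t$ subbundle $K'\subseteq E|_{V_E^{(i)}}$ with $\widetilde{K}=q_1^{\ast}K'$, and the fibre computation gives $K'|L\cong\bigoplus_{j\le t}\mathcal{O}_L(a_j^{(i)})$ for $l\in U_E^{(i)}$. Had the gap been only $1$, the exponent could equal $0$ and the direct image need not vanish, which is precisely why the descent, and hence Case II, requires gap $2$.

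Finally, in both cases I would extend the subbundle $K'$ over the dense open $V_E^{(i)}$ (open as $q_1$ is open, dense as $F$ is irreducible) to a subsheaf $K\subseteq E$ over all of $F$ by declaring $K$ to be the sheaf of local sections of $E$ whose restriction to $V_E^{(i)}$ lies in $K'$; concretely $K=E\cap {j}_{\ast}K'$ inside ${j}_{\ast}{j}^{\ast}E$, where $j\colon V_E^{(i)}\hookrightarrow F$. This $K$ is a coherent subsheaf of $E$ of rank $t$ with $K|_{V_E^{(i)}}=K'$, and it is normal in the sense of Definition \ref{normal}: for a closed $A\subseteq U$ of codimension $\ge 2$, normality of the locally free sheaf $E$ extends a section of $K$ over $U\setminus A$ to $E$ over $U$, and since $(E/K')|_{V_E^{(i)}}$ is locally free the extended section still maps into $K'$, hence lies in $K$. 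The main obstacle is the Hom-vanishing of the descent step, as it is the only point where the geometry of $\widetilde{F^{(i)}}$ through Lemma \ref{rt} and the precise numerical gap interact; the constancy-of-rank, base-change, and normal-extension arguments are routine.
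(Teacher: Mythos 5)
Your proposal is correct and follows essentially the same route as the paper: you build the rank-$t$ subsheaf as the image of the relative evaluation map over the non-jumping locus, descend it along $q_1$ via the Descent Lemma, and verify the required $\mathrm{Hom}$-vanishing fibrewise using Lemma \ref{rt} together with the gap $a_t^{(i)}-a_{t+1}^{(i)}\ge 2$, exactly as in Claim \ref{cl}. The only (harmless) divergence is in the final extension step, where the paper takes $\widetilde{K}=\ker\bigl(q_1^{\ast}E\rightarrow Q'/T(Q')\bigr)$ on all of $\widetilde{F^{(i)}}$ so that normality is automatic and then sets $K={q_1}_{\ast}\widetilde{K}$, whereas you extend by $K=E\cap j_{\ast}K'$ and check normality directly; both are valid.
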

\begin{proof}
	After tensoring  with an appropriate line bundle, we may assume $a_t^{(i)}=0,a_{t+1}^{(i)}<0$.
\begin{enumerate}
	\item If $F^{(i)}$ is in Case I, then we have the natural projection $F\xrightarrow{q} F^{(i)}$. By the hypothesis, for every point  $l\in U_E^{(i)}$ \[
	E|q^{-1}(l)\cong\oplus_{j=1}^{r}\mathcal{O}_{q^{-1}(l)}(a_j^{(i)}).
	\]
	Then $q_{\ast}E$ is a coherent sheaf over $F^{(i)}$ which is locally free over $U_E^{(i)}$. The morphism $\phi:q^{\ast}q_{\ast}E\rightarrow E$ on  each $\widetilde{L}=q^{-1}(l)\cong L$ for an $l\in U_E^{(i)}$ is given by the evaluation of the section of $E|L$. Thus the image of $\phi|\widetilde{L}$ is the subbundle\[
	\oplus_{j=1}^{t}\mathcal{O}_L(a_j^{(i)})\subseteq E|L
	\]
	of rank $t$. Hence over the open set $q^{-1}(U_E^{(i)})$, $\phi$ is a morphism of constant rank $t$ and thus its image $Im \phi\subseteq E$ over $q^{-1}(U_E^{(i)})$ is a subbundle of rank $t$.
	
	Let $Q'=E/Im\phi$ and $T(Q')$ be the torsion subsheaf of $Q'$ and \[
	K=ker(E\rightarrow Q'/T(Q')).
	\]
	Because $\widetilde{Q}=Q'/T(Q')$ is a torsion-free sheaf, $K$ is a normal subsheaf of rank $t$. Over the open set $V_E^{(i)}=q^{-1}(U_E^{(i)})\subseteq F$ the sheaf $K$ is a subbundle of $E$, which on the line $L\subseteq F$ given by $l\in U_E^{(i)}$ has the form\[
	K|L\cong\oplus_{j=1}^{t}\mathcal{O}_L(a_j^{(i)}).
	\]
	\item If $F^{(i)}$ is in Case II, then we have the standard diagram
	\begin{align}
	\xymatrix{
		\widetilde{F^{(i)}}\ar[d]^{q_1}   \ar[r]^-{q_2} & F^{(i)}=F(d_1,\ldots,d_{i-1},d_i-1,d_i+1,d_{i+1},\ldots d_s)\\
		F=F(d_1,\ldots,d_s).
	}
	\end{align}
For every point  $l\in U_E^{(i)}$, we have\[
{q_1}^{\ast}E|{q_2}^{-1}(l)\cong E|L\cong\oplus_{j=1}^{r}\mathcal{O}_{L}(a_j^{(i)}).
\]
Then ${q_2}_{\ast}{q_1}^{\ast}E$ is a coherent sheaf over $F^{(i)}$ which is locally free over $U_E^{(i)}$. The morphism $\phi:{q_2}^{\ast}{q_2}_{\ast}{q_1}^{\ast}E\rightarrow {q_1}^{\ast}E$ on each $\widetilde{L}={q_2}^{-1}(l)\cong L$ for an $l\in U_E^{(i)}$ is given by the evaluation of the section of $E|L$. Thus the image of $\phi|\widetilde{L}$ is the subbundle\[
\oplus_{j=1}^{t}\mathcal{O}_L(a_j^{(i)})\subseteq E|L
\]
of rank $t$. Hence over the open set ${q_2}^{-1}(U_E^{(i)})$, $\phi$ is a morphism of constant rank $t$ and thus its image $Im \phi\subseteq {q_1}^{\ast}E$ over ${q_2}^{-1}(U_E^{(i)})$ is a subbundle of rank $t$.

Let $Q'={q_1}^{\ast}E/Im\phi$ and $T(Q')$ be the torsion subsheaf of $Q'$ and \[
\widetilde{K}=ker({q_1}^{\ast}E\rightarrow Q'/T(Q')).
\]
Because $\widetilde{Q}=Q'/T(Q')$ is a torsion-free sheaf, $\widetilde{K}$ is a normal subsheaf of rank $t$, and outside the singularity set $S(\widetilde{Q})$ of $\widetilde{Q}$, the sheaf $\widetilde{K}$ is a subbundle of ${q_1}^{\ast}E$, which on  each $\widetilde{L}={q_2}^{-1}(l)\cong L$ given by $l\in U_E^{(i)}$ has the form\[
\widetilde{K}|\widetilde{L}\cong\oplus_{j=1}^{t}\mathcal{O}_{\widetilde{L}}(a_j^{(i)}).
\]	
Let $X=\widetilde{F^{(i)}}\backslash S(\widetilde{Q})$. $X$ is open in $\widetilde{F^{(i)}}$ and contains ${q_2}^{-1}(U_E^{(i)})$. We have the following commutative diagram
\begin{align}\label{tt}
\xymatrix{
	&X  \ar@{^{(}->}[r]^{\zeta}\ar[d]_{f} & \widetilde{F^{(i)}}\ar[d]^{q_1}
	\\
	&Y=q_1(X) \ar@{^{(}->}[r]^-{\eta} & F
}
\end{align}
with a surjective submersion $f$ with connected fibres.

In order to apply the Descent Lemma to the subbundle $\widetilde{K}|X\subseteq f^{\ast}(E|Y)$, we need to show that \[
Hom(T_{X/Y},\mathcal{H}om(\widetilde{K}|X,\widetilde{Q}|X))=0.
\]
By the following Claim \ref{cl}, the hypothesis of the Descent Lemma is satisfied. Hence over the open set $Y\subseteq F$, we get a subbundle $K'\subseteq E|Y$
with\[
f^{\ast}K'=\widetilde{K}|X\subseteq f^{\ast}(E|Y).
\]
$K'$ can be extended to a normal subsheaf $K={q_1}_{\ast}\widetilde{K}\subseteq E$ on $F$:
To prove this, we need to consider the above diagram \ref{tt} again. From the diagram and Zariski's Main Theorem, we deduce that\[
f_{\ast}\mathcal{O}_X=\eta^{\ast}\eta_{\ast}f_{\ast}\mathcal{O}_X=\eta^{\ast}{q_1}_{\ast}\zeta_{\ast}\mathcal{O}_X=\eta^{\ast}{q_1}_{\ast}\mathcal{O}_{\widetilde{F^{(i)}}}=\eta^{\ast}\mathcal{O}_{F}=\mathcal{O}_{Y}.
\]
Next, we only need to prove $K|Y=K'$, i.e. $\eta_{\ast}K'={q_1}_{\ast}\widetilde{K}$.

Because $S(\widetilde{Q})$ is of codimension at least 2 and $\widetilde{K}$ is a normal sheaf, we have $\zeta_{\ast}(\widetilde{K}|X)=\widetilde{K}$. Thus\[
\eta_{\ast}K'=\eta_{\ast}(K'\otimes f_{\ast}\mathcal{O}_X)=\eta_{\ast}f_{\ast}f^{\ast}K'={q_1}_{\ast}\zeta_{\ast}(f^{\ast}K')={q_1}_{\ast}\zeta_{\ast}(\widetilde{K}|X)={q_1}_{\ast}\widetilde{K}.
\]
It's easy to see that over the open set $V_E^{(i)}=q_1({q_2}^{-1}(U_E^{(i)}))\subseteq F$, the sheaf $K$ is a subbundle of $E$, which on the line $L\subseteq F$ given by $l\in U_E^{(i)}$ has the form\[
K|L\cong\oplus_{j=1}^{t}\mathcal{O}_L(a_j^{(i)}).
\]
\end{enumerate}
\begin{claim}\label{cl}
	If $a_{t+1}^{(i)}<-1$, then\[
	Hom(T_{X/Y},\mathcal{H}om(\widetilde{K}|X,\widetilde{Q}|X))=0.
	\]
\end{claim}
In fact, it's equivalent to prove \[
H^{0}(X,\Omega_{\widetilde{F^{(i)}}/F}\otimes\widetilde{K}^{\vee}\otimes\widetilde{Q})=0.
\]
Since the codimension of $X\backslash{q_2}^{-1}(U_E^{(i)})$ in $X$ is at least $2$ and $\Omega_{\widetilde{F^{(i)}}/F}\otimes\widetilde{K}^{\vee}\otimes\widetilde{Q}$ is torsion free, the restriction\[
H^{0}(X,\Omega_{\widetilde{F^{(i)}}/F}\otimes\widetilde{K}^{\vee}\otimes\widetilde{Q})\rightarrow H^{0}({q_2}^{-1}(U_E^{(i)}),\Omega_{\widetilde{F^{(i)}}/F}\otimes\widetilde{K}^{\vee}\otimes\widetilde{Q})
\]
is injective, it suffices to show that $\Omega_{\widetilde{F^{(i)}}/F}\otimes\widetilde{K}^{\vee}\otimes\widetilde{Q}$ has no section over ${q_2}^{-1}(U_E^{(i)})$.

Let $l\in U_E^{(i)}$ and $\widetilde{L}={q_2}^{-1}(l)\cong L$. By the previous assertion, \[
\widetilde{K}^{\vee}|\widetilde{L}\cong\oplus_{j=1}^{t}\mathcal{O}_{\widetilde{L}}(-a_j^{(i)}),~\widetilde{Q}|\widetilde{L}\cong\oplus_{j=t+1}^{r}\mathcal{O}_{\widetilde{L}}(a_j^{(i)}),
\]	
and by Lemma \ref{rt}, we have \[
\Omega_{\widetilde{F^{(i)}}/F}|\widetilde{L}=\mathcal{O}_{\widetilde{L}}(1)^{\oplus d_{i+1}-d_{i-1}-2}.
\]
Thus \[
H^{0}(\widetilde{L},\Omega_{\widetilde{F^{(i)}}/F}\otimes\widetilde{K}^{\vee}\otimes\widetilde{Q}|\widetilde{L})=0,~\text{if}~ a_{t+1}^{(i)}<-1.
\]
Then every section of $(\Omega_{\widetilde{F^{(i)}}/F}\otimes\widetilde{K}^{\vee}\otimes\widetilde{Q}|X)$ is zero over ${q_2}^{-1}(U_E^{(i)})$ and hence over $X$.
\end{proof}
The above theorem has far reaching consequences. We give first a series of immediate deduction.
\begin{cor}
Fix an integer $i, 1\le i \le s$. Let $E$ be a uniform $r$-bundle with respect to $F^{(i)}$ of type\[
\underline{a}_E^{(i)}=(a_1^{(i)},\ldots,a_r^{(i)}), a_1^{(i)}\geq\cdots\geq a_r^{(i)}
\]
If for some $t<r$,
 \[
 a_t^{(i)}-a_{t+1}^{(i)}\geq
 \left\{
 \begin{array}{ll}
 1, & and~F^{(i)}~ \text{is in Case I}\\
 2, & and ~F^{(i)}~ \text{is in Case II},
 \end{array}
 \right.\]
then we can write $E$ as an extension of uniform bundles with respect to $F^{(i)}$.
\end{cor}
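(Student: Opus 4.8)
The plan is to deduce this directly from Theorem~\ref{main}, the only extra input being that uniformity kills the jump locus. First I would observe that, since $E$ is uniform with respect to $F^{(i)}$, its splitting type $a_E^{(i)}(l)$ is by definition independent of $l$, hence equals the generic type $\underline{a}_E^{(i)}$ for every $l\in F^{(i)}$. Thus the jump set $S_E^{(i)}$ of Definition~\ref{UEi} is empty, and consequently $U_E^{(i)}=F^{(i)}$.

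Next I would apply Theorem~\ref{main}. The gap hypothesis $a_t^{(i)}-a_{t+1}^{(i)}\geq 1$ (Case I) or $\geq 2$ (Case II) is exactly the one required there, so we obtain a normal subsheaf $K\subseteq E$ of rank $t$ which is a subbundle of $E$ over the open set $V_E^{(i)}=q_1(q_2^{-1}(U_E^{(i)}))$ and satisfies $K|L\cong\oplus_{j=1}^{t}\mathcal{O}_L(a_j^{(i)})$ on each line $L$ given by $l\in U_E^{(i)}$. Since $U_E^{(i)}=F^{(i)}$, in Case II the surjectivity of the projection $q_1:\widetilde{F^{(i)}}\to F$ gives $V_E^{(i)}=q_1(q_2^{-1}(F^{(i)}))=q_1(\widetilde{F^{(i)}})=F$; in Case I the same conclusion $V_E^{(i)}=q^{-1}(F^{(i)})=F$ follows from surjectivity of $q$. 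Hence $K$ is a subbundle of $E$ over all of $F$.

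Because $K$ is a subbundle of the locally free sheaf $E$ on the whole of $F$, the quotient $N:=E/K$ is again locally free on $F$, and we have a short exact sequence $0\to K\to E\to N\to 0$. Restricting to any line $L$ determined by $l\in F^{(i)}$ and using $U_E^{(i)}=F^{(i)}$ gives $K|L\cong\oplus_{j=1}^{t}\mathcal{O}_L(a_j^{(i)})$ and $N|L\cong\oplus_{j=t+1}^{r}\mathcal{O}_L(a_j^{(i)})$; both of these splitting types are independent of $l$, so $K$ and $N$ are uniform with respect to $F^{(i)}$. This exhibits $E$ as an extension of the uniform bundles $K$ and $N$, as required.

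The bulk of the work is already contained in Theorem~\ref{main}, so I do not expect a serious obstacle here. The one point deserving care is the passage from ``normal subsheaf, subbundle only over $V_E^{(i)}$'' in Theorem~\ref{main} to ``subbundle, and hence an exact sequence, over all of $F$'': this rests on verifying $V_E^{(i)}=F$, which in turn uses both the vanishing of the jump locus (so that $U_E^{(i)}=F^{(i)}$) and the surjectivity of the projection $q_1$ (resp.\ $q$). Once $V_E^{(i)}=F$ is established, local freeness of the quotient $N$ over all of $F$ and the uniformity of $K$ and $N$ are immediate, so the remaining effort reduces to bookkeeping the splitting types on the lines.
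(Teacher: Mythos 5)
Your proposal is correct and follows essentially the same route as the paper: apply Theorem \ref{main}, note that uniformity makes the jump locus empty so that $V_E^{(i)}=F$ and $K$ is a subbundle everywhere, and then read off the uniform types of $K$ and $E/K$ on lines. The paper's own proof is just a terser version of exactly this argument.
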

\begin{proof}
By the above Theorem \ref{main}, there is a uniform bundle $K\subseteq E$ of type $\underline{a}_K^{(i)}=(a_1^{(i)},\ldots,a_t^{(i)})$ with respect to $F^{(i)}$. Then the quotient bundle $Q=E/K$ is uniform of type $(a_{t+1}^{(i)},\ldots,a_r^{(i)})$ with respect to $F^{(i)}$. We have the following exact sequence\[
0\rightarrow K\rightarrow E\rightarrow Q\rightarrow 0.
\]
\end{proof}

Let $\mathcal{F}$ be a torsion free coherent sheaf of rank $r$ over $F$.
Since the singularity set $S(\mathcal{F})$ of $\mathcal{F}$ has codimension at least $2$, there is some integer $i(1\le i \le s)$ and lines $L\subseteq F$ given by $l\in F^{(i)}$ which do not meet $S(\mathcal{F})$. If
\[\mathcal{F}|L\cong \mathcal{O}_{L}(a_1^{(i)})\oplus\cdots\oplus \mathcal{O}_{L}(a_r^{(i)}).\]
Let \[c_1^{(i)}(\mathcal{F})=a_1^{(i)}+\cdots+a_r^{(i)},\] which is independent of the choice of $L$.
We set
\[\mu^{(i)}(\mathcal{F})=\frac{c_1^{(i)}(\mathcal{F})}{\text{rk}(\mathcal{F})}.\]

\begin{definition}\label{i}
A torsion free coherent sheaf $\mathcal{E}$ over $F$ is i-semistable if for every coherent subsheaf $ \mathcal{F}\subseteq \mathcal{E}$ with $0< \text{rk}(\mathcal{F})< \text{rk}(\mathcal{E})$, we have
\[\mu^{(i)}(\mathcal{F})\le \mu^{(i)}(\mathcal{E}).\]
\end{definition}

\begin{cor}\label{gap}
Fix an integer $i, 1\le i \le s$. For an i-semistable $r$-bundle $E$ over $F$ of type $\underline{a}_E^{(i)}=(a_1^{(i)},\ldots,a_r^{(i)}), a_1^{(i)}\geq\cdots\geq a_r^{(i)}$ with respect to $F^{(i)}$, we have \[
a_j^{(i)}-a_{j+1}^{(i)}\leq 1~~ \text{for all}~ j=1,\ldots, r-1.
\]
In particular, if $F^{(i)}$ is in Case I, then $a_j^{(i)}$'s are constants for all $1\leq j\leq r$ .
\end{cor}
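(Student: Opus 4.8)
The plan is to argue by contradiction, extracting a destabilizing subsheaf from Theorem \ref{main} whenever a gap in the splitting type is too large; this is exactly the shape of the Grauert--M\"ulich--Barth argument. Throughout I would use the invariant introduced just before Definition \ref{i}: for a torsion free sheaf $\mathcal{F}$, the number $c_1^{(i)}(\mathcal{F})$ is read off from the splitting type on a generic line $L$ given by $l\in F^{(i)}$ avoiding $S(\mathcal{F})$. In particular $c_1^{(i)}(E)=\sum_{j=1}^r a_j^{(i)}$ and $\mu^{(i)}(E)=\frac{1}{r}\sum_{j=1}^r a_j^{(i)}$.

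First I would suppose, toward a contradiction, that $a_t^{(i)}-a_{t+1}^{(i)}$ exceeds the threshold of Theorem \ref{main} for some $t<r$ (that is, $\geq 1$ in Case I, $\geq 2$ in Case II). Then Theorem \ref{main} produces a normal subsheaf $K\subseteq E$ of rank $t$ which, on the generic line $L$ given by $l\in U_E^{(i)}$, satisfies $K|L\cong\bigoplus_{j=1}^{t}\mathcal{O}_L(a_j^{(i)})$. Since such an $L$ may be chosen to avoid $S(K)$, this computes $c_1^{(i)}(K)=\sum_{j=1}^t a_j^{(i)}$ and hence $\mu^{(i)}(K)=\frac{1}{t}\sum_{j=1}^t a_j^{(i)}$, with $0<\text{rk}(K)=t<r=\text{rk}(E)$, so that $K$ is an admissible test subsheaf in the sense of Definition \ref{i}.

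The key step is an elementary estimate on partial averages. Writing $A_m=\frac{1}{m}\sum_{j=1}^m a_j^{(i)}$, the telescoping identity $A_{m+1}-A_m=\frac{a_{m+1}^{(i)}-A_m}{m+1}$, together with $A_m\geq a_m^{(i)}\geq a_{m+1}^{(i)}$, shows that $A_1\geq A_2\geq\cdots\geq A_r$, with $A_{m+1}<A_m$ whenever $a_m^{(i)}>a_{m+1}^{(i)}$. In our situation the assumed gap forces $a_t^{(i)}>a_{t+1}^{(i)}$, whence $\mu^{(i)}(K)=A_t>A_r=\mu^{(i)}(E)$, contradicting the $i$-semistability of $E$. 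This already yields $a_j^{(i)}-a_{j+1}^{(i)}\leq 1$ for every $j$, in both cases.

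Finally, for the "in particular" statement I would repeat the contradiction in Case I, where the threshold in Theorem \ref{main} is only $1$: the same argument then rules out $a_t^{(i)}-a_{t+1}^{(i)}\geq 1$, forcing $a_t^{(i)}\leq a_{t+1}^{(i)}$, and combined with the standing order $a_t^{(i)}\geq a_{t+1}^{(i)}$ this gives equality for every $t$, so the $a_j^{(i)}$ are constant. I expect the only genuinely delicate point to be the bookkeeping that $K$ is torsion free with its $c_1^{(i)}$ correctly computed on a generic line, so that its slope really equals $A_t$; once that is in place, the averaging inequality and the split into the two threshold cases are routine.
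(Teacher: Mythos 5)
Your proposal is correct and follows essentially the same route as the paper: invoke Theorem \ref{main} to produce the normal subsheaf $K$ of rank $t$ with $K|L\cong\bigoplus_{j=1}^{t}\mathcal{O}_L(a_j^{(i)})$ on a generic line, then observe $\mu^{(i)}(K)>\mu^{(i)}(E)$ to contradict $i$-semistability, treating the Case I threshold separately to get constancy. The only difference is that you spell out the partial-average inequality $A_t>A_r$, which the paper simply asserts.
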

\begin{proof}
 If for the fixed $i$, $E$ is of type $\underline{a}_E^{(i)}=(a_1^{(i)},\ldots,a_r^{(i)})$ with $a_t^{(i)}-a_{t+1}^{(i)}\geq 2$ for some $t<r$, then by Theorem \ref{main} we can find a normal sheaf $K\subseteq E$ which is of the form
 \[
 K|L\cong\oplus_{j=1}^{t}\mathcal{O}_L(a_j^{(i)})
 \]
 over the line $L\subseteq F$ given by $l\in U_E^{(i)}$. Then we have $\mu^{(i)}(E)<\mu^{(i)}(K)$, hence $E$ is not $i$-semistable.

 In particular, if the $i$-th irreducible component of the manifold of lines $F^{(i)}$ is in Case I and there is some $t<r$ such that $a_t^{(i)}\neq a_{t+1}^{(i)}$, then we could find a normal sheaf $K\subseteq E$ such that $\mu^{(i)}(E)<\mu^{(i)}(K)$, hence $E$ is not $i$-semistable.
\end{proof}

\begin{cor}\label{x}
If $E$ is a strongly uniform $i$-semistable $(1\le i\le n-1)$ $r$-bundle over the complete flag variety $F$, then $E$ splits as a direct sum of line bundles. In addition $E|L\cong \mathcal{O}_L(a)^{\oplus r}$ for every line $L\subseteq F$, where $a\in \mathbb{Z}$.
\end{cor}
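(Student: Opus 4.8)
The plan is to first pin down the splitting type using semistability, and then reduce to the triviality criterion of Proposition \ref{y} by an appropriate twist.

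First I would observe that for the complete flag $F=F(1,2,\ldots,n-1)$ one has $d_j=j$, so that for every $i$ the equalities $d_i-1=i-1=d_{i-1}$ and $d_i+1=i+1=d_{i+1}$ hold (reading $d_0=0$ and $d_n=n$ at the two ends). Hence every irreducible component $F^{(i)}$ of the manifold of lines lies in Case I, and the natural projection $F\to F^{(i)}$ is available. Applying Corollary \ref{gap} to the distinguished index $i$ for which $E$ is $i$-semistable, the Case I conclusion gives that the components of $\underline{a}_E^{(i)}$ are all equal, say $a_1^{(i)}=\cdots=a_r^{(i)}=a$.

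Next I would invoke strong uniformity. By definition the splitting type of $E$ is one and the same $r$-tuple on every line $L\subseteq F$, independently of the component from which it comes; combined with the previous step this common tuple must be $(a,\ldots,a)$. Therefore $E|L\cong\mathcal{O}_L(a)^{\oplus r}$ for every line $L\subseteq F$, which already establishes the second assertion of the corollary. It then remains to produce a line bundle $\mathcal{M}$ on $F$ with $\mathcal{M}|L\cong\mathcal{O}_L(a)$ for every line $L$ in every component. For this I would use the description of $\mathrm{Pic}(F)$ as the weight lattice $\bigoplus_{j=1}^{n-1}\mathbb{Z}\,\omega_j$, paired perfectly with the classes $[L_j]$ of the lines in $F^{(j)}$ via $\langle\omega_j,[L_k]\rangle=\delta_{jk}$; taking $\mathcal{M}=\mathcal{O}_F(a\rho)$ with $\rho=\sum_j\omega_j$ yields $\deg(\mathcal{M}|L_k)=a$ for all $k$. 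After twisting, $(E\otimes\mathcal{M}^{\vee})|L\cong\mathcal{O}_L^{\oplus r}$ for every line $L\subseteq F$, so by Proposition \ref{y} the bundle $E\otimes\mathcal{M}^{\vee}$ is trivial. Consequently $E\cong\mathcal{M}^{\oplus r}$ is a direct sum of line bundles, completing the proof.

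I expect the main obstacle to be the construction of the twisting bundle $\mathcal{M}$: one must make precise that $\mathrm{Pic}(F)$ is exactly the lattice dual to the Mori cone generated by the line classes $[L_j]$, so that the degree vector $(a,\ldots,a)$ is realized by an honest line bundle, and one must also confirm that the two boundary components $F^{(1)}$ and $F^{(n-1)}$ genuinely fall under Case I (using the conventions $d_0=0$, $d_n=n$). Once $\mathcal{M}$ is in hand the remainder is a direct application of Corollary \ref{gap} and Proposition \ref{y}.
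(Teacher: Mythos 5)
Your argument is correct and follows essentially the same route as the paper: Corollary \ref{gap} (every component of the manifold of lines on the complete flag being in Case I) forces the constant splitting type $(a,\ldots,a)$, strong uniformity propagates this to every line, and a twist reduces the claim to Proposition \ref{y}. The only difference is that you make explicit the ``appropriate line bundle'' $\mathcal{M}=\mathcal{O}_F(a\rho)$ used for the twist, a detail the paper leaves implicit.
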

\begin{proof}
By Corollary \ref{gap}, we get $E|L=\mathcal{O}_L(a)^{\oplus r}$ for every line $L$ in $F$. After tensoring with appropriate line bundles, we can assume $E|L=\mathcal{O}_{L}^{\oplus r}$ for every line $L$. So $E$ is trivial by Proposition \ref{y}.
\end{proof}


\section*{Acknowledgements}
All the authors would like to show our great appreciation to the anonymous reviewer for pointing out a fatal error in one of our propositions and providing many useful suggestions in the original version.
\bibliography{ref}

\end{document}